\numberwithin{equation}{section}
\newtheorem{theorem}{Theorem}[section]
\newtheorem{proposition}[theorem]{Proposition}
\newtheorem{lemma}[theorem]{Lemma}
\theoremstyle{definition}
\newtheorem{definition}[theorem]{Definition}
\newtheorem{example}[theorem]{Example}
\theoremstyle{remark}
\newtheorem{remark}[theorem]{Remark}
\newtheoremstyle{assumption}
{}	
{}	
{}
{}
{\bfseries}
{}
{.5em}
{(\thmname{#1}\thmnumber{#2})}%
\theoremstyle{assumption}
\newtheorem{assumption}{A}
\newcommand{\tref}[2][]
	{\ifx#1""{Theorem~\textup{\ref{#2}}}%
	\else{Theorems~\textup{\ref{#1}} and~\textup{\ref{#2}}}\fi}
\newcommand{\trefs}[4][]
	{\ifx#1""{Theorems~\textup{\ref{#2}},~\textup{\ref{#3}} and~\textup{\ref{#4}}}%
	\else{Theorems~\textup{\ref{#1}},~\textup{\ref{#2}},~\textup{\ref{#3}} and~\textup{\ref{#4}}}\fi}
\newcommand{\pref}[2][]
	{\ifx#1""{Proposition~\textup{\ref{#2}}}%
	\else{Propositions~\textup{\ref{#1}} and~\textup{\ref{#2}}}\fi}
\newcommand{\prefs}[4][]
	{\ifx#1""{Propositions~\textup{\ref{#2}},~\textup{\ref{#3}} and~\textup{\ref{#4}}}%
	\else{Propositions~\textup{\ref{#1}},~\textup{\ref{#2}},~\textup{\ref{#3}} and~\textup{\ref{#4}}}\fi}
\newcommand{\lref}[2][]
	{\ifx#1""{Lemma~\textup{\ref{#2}}}%
	\else{Lemmas~\textup{\ref{#1}} and~\textup{\ref{#2}}}\fi}
\newcommand{\lrefs}[4][]
	{\ifx#1""{Lemma~\textup{\ref{#2}}, \textup{\ref{#3}} and~\textup{\ref{#4}}}%
	\else{Lemmas~\textup{\ref{#1}}, \textup{\ref{#2}}, \textup{\ref{#3}} and \textup{\ref{#4}}}\fi}
\newcommand{\dref}[2][]
	{\ifx#1""{Definition~\textup{\ref{#2}}}%
	\else{Definition~\textup{\ref{#1}} and~\textup{\ref{#2}}}\fi}
\newcommand{\rref}[2][]
	{\ifx#1""{Remark~\textup{\ref{#2}}}%
	\else{Remark~\textup{\ref{#1}} and~\textup{\ref{#2}}}\fi}
\newcommand{\iref}[2][]
	{\ifx#1""{\textup{(\ref{#2})}}%
	\else{\textup{(\ref{#1})}~and~\textup{(\ref{#2})}}\fi}
\newcommand{\secref}[2][]
	{\ifx#1""{Section~\textup{\ref{#2}}}%
	\else{Sections~\textup{\ref{#1}} and~\textup{\ref{#2}}}\fi}
\newcommand{\aref}[1]{\textup{(A\ref{#1})}}
\newcommand{\exref}[1]{Example~\textup{\ref{#1}}}
\newcommand{\RealNum}{\mathbf{R}}
\newcommand{\NaturalNum}{\mathbf{N}}
\newcommand{\idMat}[1][]{I_{#1}}
\newcommand{\probSp}{\Omega}
\newcommand{\prob}{\boldsymbol{P}}
\newcommand{\expect}{\boldsymbol{E}}
\newcommand{\CM}{\mathfrak{H}}
\newcommand{\Hurst}{H}
\newcommand{\SobSp}[2]{\ifx#2""{\mathbf{D}_{#1}}\else{\mathbf{D}_{{#1},{#2}}}\fi}
\newcommand{\tilSobSp}[2]{\ifx#2""{\tilde{\mathbf{D}}_{#1}}\else{\tilde{\mathbf{D}}_{{#1},{#2}}}\fi}
\newcommand{\var}[1]{{#1}\mathchar`-{\text{\textrm{var}}}}
\newcommand{\Hol}[1]{{#1}\mathchar`-{\text{\textrm{H{\"o}l}}}}
\newcommand{\Bes}[1]{{#1}\mathchar`-{\text{\textrm{Bes}}}}
\newcommand{\wienerChaos}[1]{\mathcal{C}_{#1}}
\newcommand{\geoRPsBes}[1]{G\Omega^{\text{\textrm{B}}}_{#1}}
\newcommand{\geoRPsHol}[1]{G\Omega^{\text{\textrm{H}}}_{#1}}
\newcommand{\geoRPs}[1]{G\Omega_{#1}}
\newcommand{\RP}[1]{\boldsymbol{#1}}
\newcommand{\indicator}[1]{\mathtt{1}_{#1}}
\DeclareMathOperator{\MinEigenVal}{\lambda_{\min}}
\DeclareMathOperator{\MaxEigenVal}{\lambda_{\max}}
\newcommand{\intPart}[1]{\lfloor #1 \rfloor}
\newcommand{\vecFields}{\mathcal{V}}
\begin{document}

\title[Asymptotic expansion of density]
{
Asymptotic expansion of the density
for hypoelliptic rough differential equation
}
%
\author[Y. Inahama]{Yuzuru Inahama}
\address[Y. I.]{Graduate School of Mathematics, Kyushu University. Motooka, Nishi-ku, Fukuoka, 819-0395, Japan}
\email{inahama@math.kyushu-u.ac.jp}

\author[N. Naganuma]{Nobuaki Naganuma}
\address[N. N.]{Graduate School of Engineering Science, Osaka University. Toyonaka, Osaka, 560-8531, Japan}
\email{naganuma@sigmath.es.osaka-u.ac.jp}

\subjclass[2010]{Primary: 60H07, Secondary: 60F99, 60G22}

\keywords{rough path theory; Malliavin calculus; fractional Brownian motion; short time asymptotic expansion}


\maketitle


\begin{abstract}
We study a rough differential equation
driven by fractional Brownian motion with Hurst parameter
$\Hurst$ $(1/4<\Hurst \le 1/2)$.
Under H\"ormander's condition on the coefficient
vector fields,
the solution has a smooth density for each fixed time.
Using Watanabe's distributional Malliavin calculus,
we obtain a short time full asymptotic expansion of the density
under quite natural assumptions.
Our main result can be regarded as
a ``fractional version" of Ben Arous' famous work
on the off-diagonal asymptotics.
\end{abstract}

\section{Introduction}
In this paper we study  from the viewpoint of Malliavin calculus
the following rough differential
equation (RDE) driven by $d$-dimensional
 fractional Brownian motion (fBm) $(w_t)$
with Hurst parameter $\Hurst \in (1/4, 1/2]$.
\begin{align*}
	dy_t
	=
		\sum_{i=1}^d
			V_i (y_t) dw^i_t
	 	+
		V_0 (y_t)\,
		dt
	\quad
	\text{with}
	\quad
	y_0=a \in \RealNum^n.
\end{align*}
Here, $V_i$ $(0 \leq i \leq d)$ is a
sufficiently regular vector fields on $\RealNum^n$.
When $\Hurst=1/2$,
fBm is the usual Brownian motion and
this RDE coincides with the usual
stochastic differential equation (SDE) of Stratonovich type.

Malliavin calculus for RDEs driven by
fractional Brownian rough path or
 Gaussian rough path is a quite active topic now and
a number of papers were published on it recently.
See \cite{
BaudoinNualartOuyangTindel016,
BaudoinOuyangZhang2015,
BaudoinOuyangZhang2016,
BoedihardjoGengQian2016,
CassFriz2010,
CassFrizVictoir2009,
CassHairerLittererTindel2015,
Driscoll2013,
HairerPillai2013,
HuTindel2013,
Inahama2014,
Inahama2016c}
among others.

Due to \cite{CassHairerLittererTindel2015, Inahama2014}
and the general theory of Malliavin calculus,
under H\"ormander's bracket generating condition on
$V_i$ $(0 \leq i \leq d)$ at the starting point $a$,
the solution $y_t$ has a smooth density $p_t (a, a')$
with respect to the Lebesgue measure
for every $t>0$, that is, the function $a' \mapsto p_t (a, a')$ is smooth
and $\prob(y_t \in A)=\int_A p_t (a, a')\,da'$ holds for every Borel subset $A \subset \RealNum^n$.

In this paper we are interested in short time asymptotics of
this density function.
We will prove in \tref{thm_MAIN}
 a full asymptotic expansion of
$p_t (a, a')$ as $t \searrow 0$ for $a \neq a'$
under, loosely speaking,
 H\"ormander's  condition at $a$ and
 the ``unique minimizer" condition
 (see \aref{ass_HorCon} and \aref{ass_minimizer}
 below, respectively).

This kind of short time asymptotic expansion of the density
(under the unique minimizer condition)  was
first shown for $\Hurst \in (1/2, 1)$
in the framework of Young integration theory
by \cite{BaudoinOuyang2011, Inahama2016b},
then for $\Hurst \in (1/3, 1/2]$ by \cite{Inahama2016c} in the
framework of  rough path theory.
These results are not completely satisfactory, however,
for the following two reasons.
First, from the viewpoint of rough path theory,
the condition on
Hurst parameter should be $\Hurst>1/4$.
Second, in these papers the ellipticity assumption
on the vector fields is assumed.
From the viewpoint of Malliavin calculus,
it should be replaced by H\"ormander's condition.

The purpose of the present paper is
to generalize this kind of off-diagonal asymptotic expansion
to a satisfactory form by refining the arguments in \cite{Inahama2016c}.
Hence, this is a continuation of \cite{Inahama2016c}
and our proof, just like the one in \cite{Inahama2016c},
 is based on Watanabe's distributional Malliavin calculus
\cite{Watanabe1987, IkedaWatanabe1989}.

Due to the generalizations, however,
many parts of our proof become more complicated than
their counterparts in \cite{Inahama2016c}.
Examples are as follows.
In \secref{sec_1565918229} we calculate the Young translation
of Besov rough path for the third level case.
Since we work under H\"ormander's condition,
we must calculate Malliavin covariance matrices
more carefully in \secref{sec_1545285261}.
In particular, we prove Kusuoka-Stroock type estimate (\pref{prop_1526540975})
and the uniform non-degeneracy for  the scaled-shifted RDE (\pref{prop_1526956280}).

When $\Hurst=1/2$, $p_t (a, a')$ can also be viewed as
the heat kernel
of the corresponding parabolic equation
and has been extensively studied
by the analytic and the probabilistic methods.
When it comes to off-diagonal asymptotic expansion
under the unique minimizer condition,
Ben Arous \cite{BenArous1988b} seems to be the most famous.
In the special case $\Hurst=1/2$,
our main result (\tref{thm_MAIN})
recovers the main result in \cite{BenArous1988b}.
Therefore, \tref{thm_MAIN} can be regarded as
an ``fBm-version" of \cite{BenArous1988b}.

The organization of this paper is as follows.
In \secref{sec_1545285157} we introduce the setting and
assumptions and then state our main result.
In \secref{sec_1545285214} we gather basic results
in rough path theory for later use, including
many probabilistic properties of
fractional Brownian rough path.
In \secref{sec_1545285241},  a Taylor-like expansion of Lyons-It\^o map,
both in the deterministic and probabilistic senses, is given.
In \secref{sec_1545285261} we apply Malliavin calculus
to an RDE driven by fractional Brownian rough path
with Hurst parameter $\Hurst \in (1/4, 1/2]$.
Following \cite{BaudoinOuyangZhang2015}, \cite{CassHairerLittererTindel2015} and \cite{GessOuyangTindel2017},
we carefully prove a few important propositions.
Those are used in \secref{sec.pf}
to prove our main theorem.
In \secref{sec_1545285297} we apply our main result to concrete examples.
\secref{sec_1545285314} is devoted to showing a technical lemma.

In what follows, we use the following notation.
For $p \ge 0$, $\intPart{p}$ denotes the integer part of $p$.
For $1<p<\infty$, $0<\alpha<1$ and a metric space $E$,
$C^{\var{p}}([0,1];E)$ and $C^{\Hol{\alpha}}([0,1];E)$
stand for the space of all continuous paths of bounded $p$-variation
and the space of all $\alpha$-H\"{o}lder paths from $[0,1]$ to $E$, respectively.
We denote by $C_o^{\var{p}}([0,1];E)$ and $C_o^{\Hol{\alpha}}([0,1];E)$
the subset of $C^{\var{p}}([0,1];E)$ and $C^{\Hol{\alpha}}([0,1];E)$, respectively,
of the paths starting from $o \in E$.
For a real symmetric matrix $A$, $\MinEigenVal(A)$ and $\MaxEigenVal(A)$ denote
the smallest and the largest eigenvalues of $A$, respectively.
A smooth vector field $F = \sum_{i=1}^n F^i
(\partial/\partial \eta^i)$ on ${\mathbf R}^n$
is often identified with
the corresponding ${\mathbf R}^n$-valued smooth function
$(F^i (\eta_1, \ldots, \eta_n) )_{1 \le i \le n}$ on ${\mathbf R}^n$.
Its Jacobi matrix is denoted by $\nabla F$, that is,
$
	(\nabla F)_{ij}
	=
		\partial F^i/\partial \eta_j
	=
		\partial F^i/\partial \eta_j
$
$(1 \le i, j \le n)$.


\section{Setting and main result}\label{sec_1545285157}

\subsection{Setting}
In this subsection, we introduce a stochastic process that will play a main role in this paper.
From now on we denote by $(w_t)_{t\geq 0}$
the $d$-dimensional fractional Brownian motion (fBm) with Hurst parameter $\Hurst$.
Throughout this paper we assume $1/4<\Hurst\leq 1/2$.
It is a unique $d$-dimensional, mean-zero, continuous Gaussian process with covariance
$\expect[w^i_s w^j_t]=\delta_{ij} R(s,t)$ ($s,t\geq 0$), where
\begin{align*}
	R(s,t)
	=
		\frac{1}{2}
		\{
			|s|^{2\Hurst}
			+|t|^{2\Hurst}
			-|t-s|^{2\Hurst}
		\}.
\end{align*}
Note that, for any $c>0$, $(w_{ct})_{ t \geq 0}$ and $(c^{\Hurst} w_t)_{ t \geq 0}$ have the same law.
This property is called self-similarity or scale invariance.
When $\Hurst=1/2$, it is the usual Brownian motion.
In what follows the time interval will always be $[0,1]$.
It is well-known that $(w_t)_{0 \le t \le 1}$ admits
a canonical rough path lift $\RP{w}=(\RP{w}^1,\dots,\RP{w}^{\intPart{1/\Hurst}})$,
which is called fractional Brownian rough path (\cite{CoutinQian2002}).

Let $V_i : \RealNum^n \to \RealNum^n$ be $C_b^{\infty}$,
that is, $V_i$ is a bounded smooth function with
bounded derivatives of all order ($0 \leq i \leq d$).
We shall identify $V_i$ with its corresponding vector field
and denote the vector field by the same symbol.
We consider the following RDE:
\begin{align}\label{eq_RDEdrivenByFBM}
	dy_t
	=
		\sum_{i=1}^d
			V_i(y_t)\,
			dw^i_t
		+
		V_0(y_t)\,
		dt
	\qquad
	\text{with}
	\qquad
	y_0=a,
\end{align}
where $a \in \RealNum^n$ is a deterministic initial point.
This RDE is driven by the Young pairing
$(\RP{w}, \RP{\lambda})$, where $\lambda_t =t$.
The unique solution is denoted by $\RP{y}=(\RP{y}^1,\dots,\RP{y}^{\intPart{1/\Hurst}})$
and we set $y_t := a + \RP{y}_{0,t}^1$ as usual.
We will sometimes write $y_t =y_t(a) = y_t(a, \RP{w})$ etc.~to make explicit the dependence on $a$ and $\RP{w}$.
We often use a matrix notation for \eqref{eq_RDEdrivenByFBM},
that is, we set $b=V_0$
and $\sigma=[V_1,\dots,V_d]$, which is $n\times d$ matrix-valued,
and write \eqref{eq_RDEdrivenByFBM} as
\begin{align*}
	dy_t
	=
		\sigma(y_t)\,
		dw_t
		+
		b(y_t)\,
		dt
	\qquad
	\text{with}
	\qquad
	y_0
	=
		a.
\end{align*}

\subsection{Assumptions}
In this subsection, we introduce assumptions of the main theorem.
First, we introduce the H{\"o}rmander condition:
\begin{definition}[H{\"o}rmander condition]\label{def_1541388669}
	Set
	\begin{align*}
			\vecFields_m
			&=
				\begin{cases}
					\{V_i\mid 1\leq i\leq d\},& m=0,\\
					\{[V_i,U]\mid U\in\vecFields_{m-1},0\leq i\leq d\},& m\geq 1,
				\end{cases}
			&
			\vecFields
			&=
				\bigcup_{m=0}^\infty
					\vecFields_m.
	\end{align*}
	For $x\in\RealNum^n$, $\vecFields_m(x)$ (resp.\,$\vecFields(x)$) stands for the subset of $\RealNum^n$
	obtained by plugging $x$ into the vector field of $\vecFields_m$ (resp.\,$\vecFields$).
	We say that the vector fields $V_0,V_1,\dots,V_d$ satisfy
	the H{\"o}rmander condition at $x\in\RealNum^n$
	if $\vecFields(x)$ linearly spans $\RealNum^n$.
\end{definition}
We assume the following:
\begin{assumption}\label{ass_HorCon}
	$V_0,V_1,\dots,V_d$ satisfy the H\"{o}rmander condition at the initial point $a\in\RealNum^n$.
\end{assumption}
It is known that, under \aref{ass_HorCon},
the law of the solution $y_t$ has a density $p_t(a, a')$
with respect to the Lebesgue measure on $\RealNum^n$
for any $t>0$ (see \cite{HairerPillai2013,CassHairerLittererTindel2015}).
Hence, for any Borel subset $A \subset \RealNum^n$, $\prob(y_t (a) \in A) = \int_A p_t(a, a')\,da'$.

Let $\CM = \CM^\Hurst$ be the Cameron-Martin space of fBm on the time interval $[0,1]$.
For every $\Hurst \in (1/4, 1/2]$, there exists $q \in [1,2)$ such that
every $\gamma \in \CM$ is continuous and of finite $q$-variation.
For $\gamma \in \CM$, we denote by $\phi^0_t =\phi^0_t (\gamma)$
the solution of the following Young ordinary differential equation (ODE):
\begin{align}\label{eq_1526966836}
	d\phi^0_t
	=
		\sum_{i=1}^d
			V_i ( \phi^0_t)\,
			d\gamma^i_t
	 \qquad
	 \text{with}
	 \qquad
	 \phi^0_0=a.
\end{align}
Set, for $a' \neq a$, $K_a^{a'} = \{ \gamma \in \CM \mid \phi^0_1(\gamma) =a'\}$.
We only consider the case where $K_a^{a'} $ is not empty.
From goodness of the rate function in Schilder-type large deviation
for fractional Brownian rough path,
it follows that
$\inf\{ \|\gamma\|_\CM \mid \gamma \in K_a^{a'}\}
= \min\{ \|\gamma\|_\CM \mid \gamma \in K_a^{a'}\}$.
Now we introduce the following assumption:
\begin{assumption}\label{ass_minimizer}
	$\bar{\gamma} \in K_a^{a'}$ which minimizes $\CM$-norm exists uniquely.
\end{assumption}
In what follows, $\bar{\gamma}$ denotes the minimizer in \aref{ass_minimizer}.
Note that the map
$
	\gamma\in\CM\hookrightarrow C^{\var{q}}([0,1];\RealNum^d)
	\mapsto
		\phi^0_1(\gamma)\in\RealNum^n
$
is Fr\'{e}chet differentiable;
$D\phi^0_1(\gamma)$ stands for the Fr\'{e}chet derivative
and is expressed by
\begin{align}\label{eq_1526970364}
	\langle D[\phi^0_1(\gamma)]^k,h\rangle_\CM
	=
		\sum_{i=1}^d
			\int_0^1
				[
					J_1(\gamma)
					K_s(\gamma)
					\sigma(\phi^0_s(\gamma))
				]_i^k\,
				dh^i_s,
\end{align}
where $[\bullet]^k$ and $[\bullet]_i^k$ are the $k$th component of the vector
and the $(k,i)$-component of the matrix, respectively,
and $J(\gamma)$ and $K(\gamma)$ are the Jacobi process of $\phi^0(\gamma)$ and its inverse, respectively.
We define the deterministic Malliavin covariance matrix $Q(\gamma)=(Q(\gamma)_{kl})_{1\leq k,l\leq n}$ by
\begin{align}\label{eq_1527058901}
	Q(\gamma)_{kl}
	=
		\langle D[\phi^0_1(\gamma)]^k,D[\phi^0_1(\gamma)]^l\rangle_{\CM^\ast}.
\end{align}
We assume that $Q$ is non-degenerate at the minimizer $\bar{\gamma}$:
\begin{assumption}\label{ass_DeterministicNondeg}
	There is a positive constant $c$ such that $Q(\bar{\gamma})\geq c\idMat$,
	where $\idMat$ stands  for the identity matrix.
\end{assumption}

Finally, we assume that $\|\bullet\|_\CM^2/2$ is not so degenerate at $\bar{\gamma}$ in the following sense:
\begin{assumption}\label{ass_hessian}
	At $\bar{\gamma}$, the Hessian of the functional $K_a^{a'} \ni \gamma \mapsto \| \gamma \|_\CM^2/2$
	is strictly positive in the quadratic form sense.
	More precisely,
	if $(- \epsilon_0, \epsilon_0) \ni u \mapsto f(u) \in K_a^{a'}$ is a smooth curve in $K_a^{a'}$
	such that $f(0) = \bar{\gamma}$ and $f'(0)\neq 0$, then
	$(d/du)^2 \vert_{u=0} \| f(u) \|_\CM^2 /2 > 0$.
\end{assumption}

\begin{remark}
By a standard argument,
\aref{ass_DeterministicNondeg} is equivalent to
the surjectivity of the tangent map
$D \phi^0_1(\bar{\gamma}) \colon \CM \to \RealNum^n$.
By the implicit function theorem, this implies that
$K_a^{a'}$ has a Hilbert-manifold structure near $\bar{\gamma}$.
Therefore, ``Hessian at $\bar{\gamma}$"
and ``a smooth curve near $\bar{\gamma}$" make sense.
\end{remark}

\subsection{Index sets}\label{sec_1529915726}

In this subsection we introduce several index sets for the exponent of the small time parameter $t>0$,
which will be used in the asymptotic expansion.
Unfortunately, these index sets are not
subsets of $\NaturalNum= \{0,1,2,\ldots\}$
and are rather complicated in general.
(However, they are discrete subsets of $({\mathbb Z}+ \Hurst^{-1}{\mathbb Z}) \cap [0,\infty)$
with the minimum $0$.)

Set
\begin{align*}
	\Lambda_1 = \left\{ n_1 + \frac{n_2}{\Hurst} \,\middle|\, n_1, n_2 \in \NaturalNum \right\}.
\end{align*}
We denote by $0=\kappa_0 <\kappa_1 < \kappa_2 < \cdots$ all the elements of $\Lambda_1$ in increasing order.
Several smallest elements are explicitly given as follows: for $1/3 <\Hurst <1/2$,
\begin{align*}
	\kappa_1
	&=
		1,
	&
	\kappa_2
	&=
		2,
	&
	\kappa_3
	&=
		\frac{1}{\Hurst},
	&
	\kappa_4
	&=
		3,
	&
	\kappa_5
	&=
		1+\frac{1}{\Hurst},
	&
	\kappa_6
	&=
		4,
	\dots
\end{align*}
and, for $1/4<\Hurst<1/3$,
\begin{align*}
	\kappa_1
	&=
		1,
	&
	\kappa_2
	&=
		2,
	&
	\kappa_3
	&=
		3,
	&
	\kappa_4
	&=
		\frac{1}{\Hurst},
	&
	\kappa_5
	&=
		4,
	&
	\kappa_6
	&=
		1+\frac{1}{\Hurst},
	\dots.
\end{align*}
We also set
\begin{gather*}
	\Lambda_2
	=
		\{ \kappa -1 \mid \kappa \in \Lambda_1 \setminus \{\kappa_0\} \}
	=
		\left\{
			0, 1, \frac{1}{\Hurst} -1, 2, \frac{1}{\Hurst}, 3,\dots
		\right\},
				\\
	\Lambda'_2
	=
		\{ \kappa -2 \mid \kappa \in \Lambda_1 \setminus \{\kappa_0, \kappa_1\} \}
	=
		\left\{
			0,
			\frac{1}{\Hurst} -2,
			1,
			\frac{1}{\Hurst}-1,
			2,
			\dots
		\right\}.
\end{gather*}
Note that in the above explicit expression of
these two index sets the elements are not sorted
in increasing order when $1/4 <\Hurst \le 1/3$.
Next we set
\begin{gather*}
	\Lambda_3
	=
		\{
			a_1+a_2+\cdots+a_m
			\mid
				\text{$m \in \NaturalNum_+$ and $a_1 ,\ldots, a_m \in \Lambda_2$}
	\},\\
	\Lambda'_3
	=
		\{
			a_1+a_2+\cdots+a_m
			\mid
			\text{$m \in \NaturalNum_+$ and $a_1 ,\ldots, a_m \in \Lambda'_2$}
	\},
\end{gather*}
where $\NaturalNum_+=\{1,2,\dots\}$.
In the sequel, $0=\nu_0 <\nu_1<\nu_2 <\cdots$ and $0=\rho_0 <\rho_1<\rho_2 <\cdots$ stand for
all the elements of $\Lambda_3$ and $\Lambda'_3$ in increasing order, respectively.
Finally,
\begin{align*}
	\Lambda_4
	=
		\Lambda_3+\Lambda'_3
	=
		\{ \nu + \rho \mid \nu\in \Lambda_3 , \rho \in \Lambda'_3\}.
\end{align*}
We denote by $0 = \lambda_0 < \lambda_1 < \lambda_2 < \cdots$ all the elements of $\Lambda_4$ in increasing order.

We remark that
when $\Hurst=1/2$ and $\Hurst=1/3$, all these index sets $\Lambda_i, \Lambda'_j$ above are just $\NaturalNum$.

\subsection{Statement of the main result}
Now we state our main theorem.
This is basically analogous to many preceding works
on the standard Brownian motion
such as \cite{BenArous1988b, Watanabe1987}.
However, when $\Hurst \neq 1/2,1/3$ and the drift term exists,
there are some differences.
First, the exponents of $t$ are not (a constant multiple of) natural numbers.
Second, cancellation of ``odd terms" (see e.g. \cite[p.~20 and p.~34]{Watanabe1987})
does not occur in general.
(These phenomena were already observed in \cite{Inahama2016b} and \cite{Inahama2016c} when $\Hurst\in(1/3,1)$.)
Our proof uses the Watanabe distribution theory.
Therefore, the asymptotic expansion is actually
obtained at the level of Watanabe distributions.
\begin{theorem} \label{thm_MAIN}
	Assume $a \neq a'$ and \aref{ass_HorCon}--\aref{ass_hessian}.
	Then, we have the following asymptotic expansion as $t \searrow 0$:
	\begin{align*}
		p_t(a, a')
		\sim
			\exp
				\left(
					-
					\frac{ \|\bar{\gamma}\|_\CM^2}{2t^{2\Hurst}}
				\right)
			\frac{1}{t^{n \Hurst}}
			\left\{
				\alpha_{0}
				+\alpha_{\lambda_1} t^{\lambda_1 \Hurst}
				+\alpha_{\lambda_2} t^{\lambda_2 \Hurst}
				+\cdots
			\right\}
	\end{align*}
	for a certain positive constant $\alpha_{0}$ and
	certain real constants $\alpha_{\lambda_j}$ $(j =1,2,\dots)$.
	Here, $0=\lambda_0 <\lambda_1<\lambda_2 <\cdots$ are all the elements of
	$\Lambda_4$ in increasing order.
\end{theorem}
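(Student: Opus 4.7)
The plan is to follow the Watanabe distributional scheme in the spirit of \cite{BenArous1988b, Watanabe1987}, now adapted to the rough path / fBm setting. Using the self-similarity of fBm, I set $\epsilon = t^{\Hurst}$ and rescale time so that the law of $y_t(a)$ agrees with that of $y^\epsilon_1$, where $y^\epsilon$ solves the small-parameter RDE
\begin{align*}
	dy^\epsilon_s = \epsilon\, \sigma(y^\epsilon_s)\, dw_s + \epsilon^{1/\Hurst}\, b(y^\epsilon_s)\, ds, \qquad y^\epsilon_0 = a.
\end{align*}
Hence $p_t(a,a')$ equals the density of $y^\epsilon_1$ at $a'$, and the target exponent $\|\bar{\gamma}\|_\CM^2/(2t^{2\Hurst})$ becomes $\|\bar{\gamma}\|_\CM^2/(2\epsilon^2)$. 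The problem reduces to extracting a full asymptotic expansion of the Watanabe distribution $\expect[\delta_{a'}(y^\epsilon_1)]$ as $\epsilon \searrow 0$.

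Next I translate the Wiener measure by $\bar{\gamma}/\epsilon$ via the Cameron-Martin formula; at the rough-path level this is the Young translation of $\RP{w}$ by $\bar{\gamma}/\epsilon$ computed in \secref{sec_1565918229}. Writing $z^\epsilon$ for the solution of the shifted RDE, one has $z^\epsilon_1 \to \phi^0_1(\bar{\gamma}) = a'$ as $\epsilon \searrow 0$, and
\begin{align*}
	\expect[\delta_{a'}(y^\epsilon_1)] = \exp\!\Bigl(-\tfrac{\|\bar{\gamma}\|_\CM^2}{2\epsilon^2}\Bigr)\, \expect\!\Bigl[\delta_{a'}(z^\epsilon_1)\, \exp\!\bigl(-\tfrac{1}{\epsilon}\langle \bar{\gamma}, w\rangle_\CM\bigr)\Bigr].
\end{align*}
Rescaling $\eta^\epsilon := (z^\epsilon_1 - a')/\epsilon$ converts $\delta_{a'}(z^\epsilon_1)$ into $\epsilon^{-n}\delta_0(\eta^\epsilon)$, producing the $t^{-n\Hurst}$ prefactor. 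The Taylor-like expansion of the Lyons-It\^o map developed in \secref{sec_1545285241} then yields expansions of $\eta^\epsilon$ and of the stochastic weight in every $\mathbf{D}_{p,k}$ in powers of $\epsilon$, whose exponents lie in the index sets $\Lambda_2$ and $\Lambda_2'$ (noise and drift contributions respectively), with coefficients given by explicit iterated-integral functionals of $\RP{w}$.

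To compose these expansions with $\delta_0$ and take expectation, I invoke Watanabe's pullback theory. The two decisive inputs are the Kusuoka-Stroock-type estimate on the Jacobian along the shifted RDE (\pref{prop_1526540975}) and the uniform-in-$\epsilon$ non-degeneracy of the Malliavin covariance of $\eta^\epsilon$ in all $\mathcal{L}^p$ (\pref{prop_1526956280}), both proved in \secref{sec_1545285261} from \aref{ass_HorCon} and \aref{ass_DeterministicNondeg}. These give an asymptotic expansion of $\delta_0(\eta^\epsilon)$ in $\mathbf{D}_{-\infty}$ with exponents in $\Lambda_3$, while expanding the exponential weight supplies additional powers indexed by $\Lambda_3'$. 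Multiplying and taking expectation produces an asymptotic series with combined exponents in $\Lambda_4 = \Lambda_3 + \Lambda_3'$. \aref{ass_hessian} then ensures that the remaining Gaussian integral on the kernel of $D\phi^0_1(\bar{\gamma})$ converges and that the leading coefficient $\alpha_0$ is strictly positive. Reverting $\epsilon = t^{\Hurst}$ yields the stated expansion in $t^{\lambda_j \Hurst}$.

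The main obstacle is establishing the uniform non-degeneracy of the Malliavin covariance of $\eta^\epsilon$ under H\"ormander's condition (rather than ellipticity) in the rough-path regime $\Hurst \in (1/4, 1/2]$. Classical Norris / Kusuoka-Stroock arguments for SDEs do not transfer verbatim, since the driving noise is neither a semimartingale nor $1/2$-H\"older, and the Jacobian process along the shifted RDE must be controlled simultaneously in $\epsilon$, in the Sobolev index, and pathwise. The delicate interpolation between deterministic rough-path bounds on $J(\bar{\gamma})$ and Gaussian concentration for the stochastic remainder, in the spirit of \cite{BaudoinOuyangZhang2015, CassHairerLittererTindel2015, GessOuyangTindel2017}, is what \secref{sec_1545285261} is designed to supply and forms the hardest technical core of the proof.
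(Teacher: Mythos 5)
Your proposal follows the same overall route as the paper: rescale by self-similarity to reduce to $\expect[\delta_{a'}(y_1^\epsilon)]$ with $\epsilon=t^\Hurst$, apply the Cameron-Martin shift by $\bar{\gamma}/\epsilon$, rescale the delta to produce the $\epsilon^{-n}$ prefactor, feed the Taylor expansion of the Lyons-It\^o map into Watanabe's pullback machinery using \pref[prop_1526540975]{prop_1526956280}, and use \aref{ass_hessian} to make the leading Gaussian integral finite and positive. So far so good.

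However, there is a genuine gap in the middle of the argument, at the point where you assert that the ``stochastic weight'' admits an expansion ``in every $\mathbf{D}_{p,k}$ in powers of $\epsilon$.'' After the Cameron-Martin shift, the weight is $\exp\bigl(-\tfrac{1}{\epsilon}\langle\bar{\gamma},w\rangle\bigr)$, which has no useful integrability uniformly in $\epsilon$ and certainly does not expand in Sobolev spaces. Two additional ideas are required before any expansion is legitimate. First, because the shifted endpoint is pinned at $a'$ by the delta function, one uses the Lagrange-multiplier identity $\langle\bar{\gamma},w\rangle=\langle\bar\nu,\phi^1_1\rangle$ together with $\tilde y^\epsilon_1=a'+\epsilon\phi^1_1+r^2_{\epsilon,1}$ to rewrite the weight on the support of $\delta_0\bigl((\tilde y^\epsilon_1-a')/\epsilon\bigr)$ as $\exp\bigl(\langle\bar\nu,r^2_{\epsilon,1}\rangle/\epsilon^2\bigr)$. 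Second, even this rewritten weight does not lie in $\tilSobSp{\infty}{}$ by itself; one must multiply by the rough-path cutoff $\chi_\eta\bigl(\epsilon,w+\bar{\gamma}/\epsilon\bigr)$ (whose construction is the reason the Besov-type rough-path norm is used) and by a cutoff $\psi$ in $|(\tilde y^\epsilon_1-a')/\epsilon|$, and then justify that the discarded contribution is $O\bigl(\exp\{-(\|\bar{\gamma}\|^2_\CM+c)/2\epsilon^2\}\bigr)$ via the large-deviation/Kusuoka-Stroock argument of \lref{lm.ldpcut}. Only on the event isolated by $\chi_\eta$ can one invoke the deterministic rough-path bounds of \pref{pr.map.rp2.5} to show $\langle\bar\nu,r^{\kappa_3}_{\epsilon,1}\rangle/\epsilon^2$ and $r^2_{\epsilon,1}/\epsilon$ are small enough not to destroy integrability. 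Your proposal skips all of this, and in fact the paper singles this integrability issue out — not the non-degeneracy — as the reason the two technical cutoff factors appear in $F(\epsilon,\bullet)$; omitting them leaves the proposed expansion step unjustified.
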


The above result improves that in \cite{Inahama2016c}.
The reason is as follows.
First, we work under H\"ormander condition instead of the ellipticity assumption.
Second, the case $1/4 <\Hurst \le 1/3$ is also treated.
Hence, we must calculate the third level rough paths.

\begin{remark}\label{RemDriftZero}
If RDE \eqref{eq_RDEdrivenByFBM} has no drift term, i.e.,
$V_0 \equiv 0$, then we can repalce $\Lambda_4$ by $2 \NaturalNum$ in \tref{thm_MAIN}, namely,
\begin{align}\label{eq_181109}
		p_t(a, a')
		\sim
			\exp
				\left(
					-
					\frac{ \|\bar{\gamma}\|_\CM^2}{2t^{2\Hurst}}
				\right)
			\frac{1}{t^{n \Hurst}}
			\left\{
				\alpha_{0}
				+\alpha_{2} t^{2 \Hurst}
				+\alpha_{4} t^{4 \Hurst}
				+\cdots
			\right\}
	\end{align}
as $t \searrow 0$.
The reason is as follows.
First, if $V_0 \equiv 0$, the index sets
$\Lambda_i$, $\Lambda'_j$ $(1 \le i \le 4, 2 \le j \le 3)$
in the sequel can be replaced by $\NaturalNum$.
Second, the odd-numbered terms in the asymptotics of
the generalized Wiener functional are also odd
as generalized Wiener functionals.
Hence, their generalized expectations all vanish.

When $\Hurst =1/2$, \eqref{eq_181109} holds for the same reason.
Thus, we reprove the main result of
Ben Arous \cite{BenArous1988b} via rough path theory.
\end{remark}

\section{Preliminaries}\label{sec_1545285214}

\subsection{Rough path analysis}
In this paper we basically work in Lyons' original framework of rough path theory.
(The only exception is \secref{subsec5.2}, where the controlled
path theory is used.)
We borrow most of notations and terminologies from \cite{LyonsQian2002, LyonsCaruanaLevy2007, FrizVictoir2010}.
Let $p \in [2,4)$ be the roughness constant and let $q \in [1,2)$ be such that $1/p +1/q >1$.
Set $\alpha=1/p$ and let $m\in\NaturalNum_+$ satisfy $\alpha-1/m>0$.
We denote by $\geoRPs{p}(\RealNum^d)$,  $\geoRPsHol{\alpha}(\RealNum^d)$, and $\geoRPsBes{\alpha,m}(\RealNum^d)$
the geometric rough path spaces with $p$-variation, $\alpha$-H{\"o}lder and $(\alpha,m)$-Besov topologies, respectively.
In this paper, $\RP{x}= (\RP{x}^1,\dots,\RP{x}^{\intPart{p}})$ stands for
a generic element in these rough path spaces.
Recall that the $p$-variation, $\alpha$-H{\"o}lder and $(\alpha,m)$-Besov topologies are induced by
\begin{gather*}
	\begin{aligned}
		\|\RP{x}^i\|_{\var{p/i}}
		&=
			\sup_{0=t_0<\cdots<t_K=1}
				\left(
					\sum_{k=1}^K
						|\RP{x}^i_{t_{k-1},t_k}|^{p/i}
				\right)^{i/p},
		&
		\|\RP{x}^i\|_{\Hol{i\alpha}}
		&=
			\sup_{0\leq s<t\leq 1}
				\frac{|\RP{x}^i_{s,t}|}{|t-s|^{i\alpha}},\\
	\end{aligned}\\
	\|\RP{x}^i \|_{\Bes{(i\alpha,m/i)}}
	=
		\left(
			\iint_{0 \leq s < t \leq 1}
				\frac{|\RP{x}^i_{s,t}|^{m/i}}{|t-s|^{1+i\alpha\cdot m/i}}\,
				dsdt
		\right)^{i/m}
	\qquad\quad
	(i=1,\dots,\intPart{p}).
\end{gather*}
Next we introduce homogeneous norms to
$\geoRPs{p}(\RealNum^d)$, $\geoRPsHol{\alpha}(\RealNum^d)$, and $\geoRPsBes{\alpha,m}(\RealNum^d)$
which are consistent with their topology.
More explicitly, they are given by
\begin{gather*}
	\begin{aligned}
		\|\RP{x}\|_{\var{p}}
		&=
			\sum_{i=1}^{\intPart{1/\alpha}}
				\|\RP{x}^i \|_{\var{p/i}}^{1/i},
		&
		\|\RP{x}\|_{\Hol{\alpha}}
		&=
			\sum_{i=1}^{\intPart{1/\alpha}}
				\|\RP{x}^i \|_{\Hol{i\alpha}}^{1/i},
	\end{aligned}\\
	\|\RP{x}\|_{\Bes{(\alpha,m)}}
	=
		\sum_{i=1}^{\intPart{1/\alpha}}
			\|\RP{x}^i \|_{\Bes{(i\alpha,m/i)}}^{1/i}.
\end{gather*}
We denote by $\|\RP{x}^i\|_{\var{p/i};[s,t]}$, $\|\RP{x}\|_{\var{p};[s,t]}$, etc.~
the norms restricted on the subinterval $[s,t]$.

For a rough path $\RP{x}$, we write $x_t=\RP{x}^1_{0,t}$ as usual.
For $x \in C_0^{\var{r}} ([0,1]; \RealNum^d)$ with $r\in[1,2)$,
we denote the natural lift of $x$ (i.e., the smooth rough path lying above $x$)
by the corresponding boldface letter $\RP{x}$.
Note that, for $x\in C_0^{\var{r}} ([0,1];\RealNum^d)$ and
$y\in C_0^{\var{r}} ([0,1];\RealNum^e)$,
$(\RP{x}, \RP{y}) \in \geoRPs{p}(\RealNum^{d+e})$ stands for the natural lift of $(x,y)\in C_0^{\var{r}} ([0,1];\RealNum^{d+e})$,
not for the pair $(\RP{x}, \RP{y}) \in \geoRPs{p}(\RealNum^{d}) \times \geoRPs{p}(\RealNum^{e})$.
In a similar way, for $\RP{x} \in \geoRPs{p}(\RealNum^{d})$
and $h \in C_0^{\var{q}}(\RealNum^e)$ with $1/p+1/q>1$,
$(\RP{x}, \RP{h})\in \geoRPs{p}(\RealNum^{d+e})$ stands for the Young pairing.
For $\RP{x} \in \geoRPs{p}(\RealNum^{d})$
and $h \in C_0^{\var{q}}(\RealNum^d)$ with $1/p+1/q>1$,
$\RP{x}+\RP{h}=\tau_h(\RP{x})\in\geoRPs{p}(\RealNum^d)$ stands for the Young translation.
(See \cite[Section~9.4]{FrizVictoir2010}.)
We denote by $c\RP{x}$ the dilation of a rough paths $\RP{x}$
by $c\in\RealNum$.
These notations may be somewhat misleading.
But, they make many operations intuitively clear and easy to understand
when we treat rough paths over a direct sum of many vector spaces.

Note that the spaces $\geoRPs{p}(\RealNum^d)$, $\geoRPsHol{\alpha}(\RealNum^d)$, and $\geoRPsBes{\alpha,m}(\RealNum^d)$
enjoy the next continuous embeddings.
We have
$
	\geoRPsBes{\alpha,m}(\RealNum^d)
	\hookrightarrow
		\geoRPs{p}(\RealNum^d)
$
and
\begin{align}\label{eq_1545292286}
	\|\RP{x}\|_{\var{p};[s,t]}
	\leq
		c
		\|\RP{x}\|_{\Bes{(\alpha,m)}}
		(t-s)^{\alpha -\frac{1}{m}}
	\quad
	\quad
	(\RP{x}\in\geoRPsBes{\alpha,m}(\RealNum^d))
\end{align}
from \cite[Corollary~A.3]{FrizVictoir2010}.
(The continuity of this embedding is not explicitly written in  \cite{FrizVictoir2010},
but can be shown by standard and slightly lengthy argument.)
For $\alpha<\beta\leq 1/\intPart{1/\alpha}$,
a direct computation implies
Besov-H{\"o}lder embedding
$
	\geoRPsHol{\beta}(\RealNum^d)
	\hookrightarrow
		\geoRPsBes{\alpha,m}(\RealNum^d)
$.

For a control function $\omega$ in the sense of \cite[p.~16]{LyonsQian2002},
we write $\bar{\omega} := \omega (0,1)$.
For any $\RP{x} \in \geoRPs{p}(\RealNum^{d})$,
\begin{align}\label{nat_control.def}
	\omega_{\RP{x}} (s,t)
	:=
		\sum_{i=1}^{\intPart{p}}
			\|\RP{x}^i\|_{\var{p/i};[s,t]}^{p/i}
	\qquad\qquad
	(0 \leq s \leq t \leq 1)
 \end{align}
defines a control function.
(This control function
 is equivalent to the one defined by Carnot-Carateodory metric.)
Similarly,
we set $\omega_{h} (s,t) := \|h \|^q_{\var{q}; [s,t]}$
for $h \in C_0^{\var{q}} ([0,1]; \RealNum^e)$.

For $\delta >0$ and $\RP{x} \in \geoRPs{p}(\RealNum^{d})$,
set $\tau_0 (\delta) =0$ and
\begin{align*}
	\tau_{i+1}(\delta)
	=
		\inf
			\{
				t\in (\tau_i(\delta),1]
				\mid
				\omega_{\RP{x}} ( \tau_i (\delta) ,t)
				\geq \delta
			\}
		\wedge 1
		\qquad
		(i=1,2,\dots)
\end{align*}
Define
\begin{align}\label{eq_1529919044}
	N_{\delta} (\RP{x})
	=
		\sup \{ i \in \NaturalNum \mid \tau_i (\delta) <1 \}.
\end{align}
Superadditivity of $\omega_{\RP{x}} $ yields
$\delta N_{\delta} (\RP{x}) \leq \overline{ \omega_{\RP{x}} }$.
This quantity \eqref{eq_1529919044} was first studied by Cass, Litterer, and Lyons \cite{CassLittererLyons2013}.

Let $\RP{x}\in\geoRPs{p}(\RealNum^{d+1})$.
We consider an RDE
\begin{align}\label{eq_1526972098}
	dy_t
	=
		\sum_{i=0}^d
			V_i(y_t)\,
			dx^i_t
	\qquad
	\text{with}
	\qquad
	y_0
	=
		a.
\end{align}
It is well-known that the solution map
$
	\geoRPs{p}(\RealNum^{d+1})
	\ni
	\RP{x}
	\mapsto
		\RP{y}=(\RP{y}^1,\dots,\RP{y}^{\intPart{p}})
	\in
		\geoRPs{p}(\RealNum^{n})
$
is continuous.
Set $\Phi(\RP{x})=\RP{y}$ and $y_t=a+\RP{y}^1_{0,t}$.
The Jacobi process
$
	J
	=
		(
			(
				J^{kl}_t
				=
					\frac{dy^k_t}{da^l}
			)_{t\in[0,1]}
		)_{1\leq k,l\leq n}
$
and
its inverse
$
	K=(K^{kl})_{1\leq k,l\leq n}
$
exist
and satisfy
\begin{align}
	\label{eq_1539835811}
	dJ_t
	&=
		+
		\sum_{i=0}^d
			\nabla V_i(y_t)
			J_t\,
			dx^i_t
	\qquad
	\text{with}
	\qquad
	J_0
	=
		\idMat,\\
	\label{eq_1539835827}
	dK_t
	&=
		-
		\sum_{i=0}^d
			K_t
			\nabla V_i(y_t)\,
			dx^i_t
	\qquad
	\text{with}
	\qquad
	K_0
	=
		\idMat.
\end{align}
Here, $\idMat$ stands for the identity matrix with size $n$.
It is well-known that the system of RDEs
\eqref{eq_1526972098}--\eqref{eq_1539835827}
is globally well-posed.
In particular, the map $\RP{x}\mapsto (\RP{y},\RP{J},\RP{K})$
is continuous.

Here, we make a remark on continuity of the solution map.
Let $\lambda$ be a one-dimensional path defined by $\lambda_t=t$.
The solution map $\RP{x}\mapsto (\RP{y},\RP{J},\RP{K})$ is continuous
on $\geoRPs{p}(\RealNum^{d+1})$ as stated above.
In addition, $\geoRPs{p}(\RealNum^d)\times\RealNum\langle\lambda\rangle$
is embedded in $\geoRPs{p}(\RealNum^{d+1})$ continuously.
Hence, the composition $(\RP{x},c\lambda)\mapsto (\RP{x},c\RP{\lambda})\mapsto  (\RP{y},\RP{J},\RP{K})$ of the two maps
is continuous on $\geoRPs{p}(\RealNum^d)\times\RealNum\langle\lambda\rangle$.
This continuity plays important role in \secref[sec_1542002443]{subsec.ldp},
in which we will use results on large deviation for fractional Brownian rough path.

\subsection{RDEs driven by fBm}
In this paper, we consider the case $x=(w,\lambda)$,
where $w$ is an fBm with the Hurst parameter $1/4<\Hurst\leq 1/2$
and $\lambda$ is a one-dimensional path defined by $\lambda_t=t$.
Let $p$ be larger than but sufficiently close to $1/\Hurst$.
Since $w$ admits a canonical lift $\RP{w}\in\geoRPs{p}(\RealNum^d)$ (see \cite{CoutinQian2002}),
we can deal with \eqref{eq_RDEdrivenByFBM} in the framework of rough path analysis
and obtain $\RP{y}=\Phi(\RP{w},\RP{\lambda})$.
Next we introduce a scaled (and shifted) RDE of \eqref{eq_RDEdrivenByFBM}.
For every $0<\epsilon<1$ and $\gamma\in\CM$,
we consider a scaled RDE
\begin{align}\label{eq_ScaledRDEDrivenByFBm}
	dy^\epsilon_t
	=
		\sum_{i=1}^d
			V_i(y^\epsilon_t)\,
			d(\epsilon w)^i_t
		+
		V_0(y^\epsilon_t)\,
		d(\epsilon^{1/\Hurst}t)
	\qquad
	\text{with}
	\qquad
	y_0
	=
		a,
\end{align}
and a scaled-shifted RDE
\begin{align}\label{eq_ScaledShiftedRDEDrivenByFBm}
	d\tilde{y}^\epsilon_t
	=
		\sum_{i=1}^d
			V_i(\tilde{y}^\epsilon_t)\,
			d(\epsilon w+\gamma)^i_t
		+
		V_0(\tilde{y}^\epsilon_t)\,
		d(\epsilon^{1/\Hurst}t)
	\qquad
	\text{with}
	\qquad
	y_0
	=
		a.
\end{align}
The solutions are respectively given by
\begin{align*}
	\RP{y}^\epsilon
	&=
		\Phi(\epsilon\RP{w},\epsilon^{1/\Hurst}\RP{\lambda}),
	&
	\tilde{\RP{y}}^\epsilon
	&=
			\Phi(\epsilon\RP{w}+\RP{\gamma},\epsilon^{1/\Hurst}\RP{\lambda}).
\end{align*}
It is known that the processes
$(y^{\epsilon}_t)$ and $(y_{\epsilon^{1/\Hurst}t})$ have the same law.
We denote the Jacobi processes and its inverses of $y^\epsilon$ and $\tilde{y}^\epsilon$
by $J^\epsilon$, $K^\epsilon$, $\tilde{J}^\epsilon$ and $\tilde{K}^\epsilon$.
(In the next subsection, a brief summary of the Cameron-Martin space $\CM$
and the Young translation by $\gamma \in \CM$ will be given.)

We remark that everything in this subsection still holds
if $p$-variation topology is replaced by $1/p$-H\"older topology.
See \cite{FrizVictoir2010, FrizVictoir2006b}.

\subsection{The Cameron-Martin space and its isometric space}
Next we introduce the Cameron-Martin space $\CM$ associated to a $d$-dimensional fBm on the time interval $[0,1]$.
Let $\mathfrak{E}_1$ be the linear span of $\{R(t,\bullet)\}_{t\in[0,1]}$
and define the inner product
\begin{align*}
	\left\langle
		\sum_{k=1}^m
			a_k
			R(s_k,\bullet),
		\sum_{l=1}^n
			b_l
			R(t_l,\bullet)
	\right\rangle_{\mathfrak{E}_1}
	=
		\sum_{k=1}^m
		\sum_{l=1}^n
			a_k
			b_l
			R(s_k,t_l).
\end{align*}
The Cameron-Martin space $\CM$ associated to a $d$-dimensional fBm is given by
the completion of $\mathfrak{E}=\mathfrak{E}_1^d$ with respect to the norm
$
	\|\bullet\|_\CM
	=
		\langle
			\bullet,\bullet
		\rangle_{\CM}
$,
where
$
	\langle
		f,g
	\rangle_{\CM}
	=
		\sum_{i=1}^d
			\langle
				f^i,g^i
			\rangle_{\mathfrak{E}_1}
$
for $f=(f^1,\dots,f^d)^\top$, $g=(g^1,\dots,g^d)^\top\in\mathfrak{E}$.
Note that $\CM$ is unitarily isometric to the first Wiener chaos $\wienerChaos{1}$,
which is the Hilbert space defined
by the $\|\bullet\|_{L^2(\probSp;\RealNum)}$-completion of
the linear span of $(w_t)_{t\in [0,1]}$.
From \cite[Corollary~1]{FrizVictoir2006b},
we see the continuous embedding
$\CM\hookrightarrow C_0^{\var{q}}([0,1];\RealNum^d)$ for $(\Hurst+1/2)^{-1}<q<2$.
According to \cite{FrizGessGulisashviliRiedel2016}, the embedding holds for $q=(\Hurst+1/2)^{-1}$.
(Note that if $p > 1/\Hurst$ is sufficiently close to $1/\Hurst$,
then $1/p + 1/q >1$ holds since $\Hurst>1/4$.
Therefore, the Young translation
by $\gamma \in \CM$ is well-defined on $\geoRPs{p}(\RealNum^d)$.)

Next we introduce another Hilbert space $\tilde{\CM}$, which is isometric to $\CM$.
Let $\tilde{\mathfrak{E}}_1$ be the linear span of $\{\indicator{[0,t]}\}_{t\in [0,1]}$
and define a inner product by
\begin{align*}
	\left\langle
		\sum_{k=1}^m
			a_k
			\indicator{[0,s_k]},
		\sum_{l=1}^n
			b_l
			\indicator{[0,t_l]}
	\right\rangle_{\tilde{\mathfrak{E}}_1}
	=
		\sum_{k=1}^m
		\sum_{l=1}^n
			a_k
			b_l
			R(s_k,t_l).
\end{align*}
We define $\tilde{\CM}$
by the completion of $\tilde{\mathfrak{E}}=\tilde{\mathfrak{E}}_1^d$
with respect to the norm
$
	\|\bullet\|_{\tilde{\CM}}
	=
		\langle
			\bullet,\bullet
		\rangle_{\tilde{\CM}}
$,
where
$
	\langle
		f,g
	\rangle_{\tilde{\CM}}
	=
		\sum_{i=1}^d
			\langle
				f^i,g^i
			\rangle_{\tilde{\mathfrak{E}}_1}
$
for $f=(f^1,\dots,f^d)^\top$, $g=(g^1,\dots,g^d)^\top\in\tilde{\mathfrak{E}}$.
Note that $\tilde{\CM}=I^{1/2-\Hurst}_{1-}(L^2([0,1];\RealNum^d))$ from \cite{DecreusefondUstunel1999} and \cite[p.284]{Nualart2006}.
Here, $I^{1/2-\Hurst}_{1-}$ denotes the fractional integral of order $1/2-\Hurst$.
Hence, the inclusions
$
	C^{\Hol{(\Hurst-\delta)}}([0,1];\RealNum^d)
	\subset
		\tilde{\CM}
	\subset
		L^2([0,1];\RealNum^d)
$
hold for $0<\delta<2(\Hurst-1/4)$.
The linear map $\mathcal{R}=(\mathcal{R}^1,\dots,\mathcal{R}^d)^\top\colon\tilde{\mathcal{E}}\to\CM$ defined by
$\mathcal{R}^i \indicator{[0,t]}=R(t,\bullet)$ extends to a unitary isometry
from $\tilde{\CM}$ to $\CM$.

\begin{remark}\label{rem_1519281960}
	Under the condition $1/4<\Hurst\leq 1/2$,
	we can choose two numbers $1/\Hurst<p<1+\intPart{1/\Hurst}$ and $(\Hurst+1/2)^{-1}<q<2$ with $1/p+1/q>1$.
	For every $A=(A_1,\dots,A_d)\in C^{\var{p}}([0,1];\RealNum^d)$, we can define $\phi_A\in\CM^\ast$ by
	\begin{align*}
		\phi_A(h)
		=
			\sum_{i=1}^d
				\int_0^1
					A_i(s)\,
					dh^i_s,
	\end{align*}
	where the right-hand side is the Young integral since $h\in \CM\subset C^{\var{q}}_0([0,1];\RealNum^d)$.
	By using the isometry between $\CM^\ast$ and $\mathcal{C}_1$,
	the definition of $\tilde{\CM}$
	and the inequality $\|\bullet\|_{L^2([0,1];\RealNum^d)}\leq c\|\bullet\|_{\tilde{\CM}}$ for some positive constant $c$,
	we have
	\begin{align*}
		\|\phi_A\|_{\CM^\ast}^2
		=
			\expect
				\Bigg[
					\Bigg(
						\sum_{i=1}^d
							\int_0^1
								A_i(s)\,
								dw^i_s
					\Bigg)^2
				\Bigg]
		=
			\|A\|_{\tilde{\CM}}^2
		\geq
			c^{-2}
			\|A\|_{L^2([0,1];\RealNum^d)}^2.
	\end{align*}
\end{remark}

\subsection{Large deviations}
Let $(1+\intPart{1/\Hurst})^{-1}<\alpha<\beta<\Hurst$
and $m\in\NaturalNum_+$ satisfy $\alpha-1/m>0$.
Recall that fBm $(w_t)$ admits a natural lift a.s.\ via
the dyadic piecewise linear approximation
and the lift $\RP{w}$ is a random variable taking values in $\geoRPsHol{\beta}(\RealNum^d)$.
For $0<\epsilon<1$,
the law of $\epsilon \RP{w}$ on this space
is denoted by $\nu_{\epsilon} = \nu^H_{\epsilon}$.
Note that the lift of Cameron-Martin space $\CM$
is contained in $\geoRPsHol{\beta} (\RealNum^d)$.
Moreover, as $\epsilon \searrow 0$,
Schilder-type large deviations hold
 for
 $\{\nu_{\epsilon}\}_{\epsilon>0}$.
(See \cite{FrizVictoir2007} and \cite[Sections~13.6 and~15.7]{FrizVictoir2010}.)
Because of the Besov-H\"older embedding mentioned above,
these properties also hold in $\geoRPsBes{\alpha,m}(\RealNum^d)$.
As usual, the good rate function $\mathcal{I}$ is given as follows:
$\mathcal{I} (\RP{x})= \|h\|_\CM^2 /2$ if $\RP{x}$ is the lift of some $h\in\CM$
and $\mathcal{I} (\RP{x})= \infty$ if otherwise.

Next, set $\hat\nu_{\epsilon} = \nu_{\epsilon} \otimes \delta_{ \epsilon^{1/\Hurst} \lambda }$,
where $\lambda$ is a one-dimensional path defined by $\lambda_t=t$
and $\otimes$ stands for the product of probability measures.
This measure is supported on
$\geoRPsBes{\alpha,m}(\RealNum^d) \times \RealNum \langle \lambda \rangle$.
The Young pairing map
$
	\geoRPsBes{\alpha,m}(\RealNum^d) \times \RealNum \langle \lambda \rangle
	\to
		\geoRPsBes{\alpha,m} (\RealNum^{d+1})
$
is continuous.
The law of this map under $\hat\nu_{\epsilon}$ is
the law of $(\epsilon \RP{w}, \epsilon^{1/\Hurst} \RP{\lambda})$,
the Young pairing of $\epsilon \RP{w}$ and $\epsilon^{1/\Hurst} \lambda$.
Define $\hat{\mathcal{I}}(\RP{x},l)=\|h\|_{\CM}^2/2$
if $\RP{x}$ is the lift of some $h\in \CM$ and $l_t \equiv 0$
and define $\hat{\mathcal{I}}(\RP{x},l)=\infty$ if otherwise.
Here, $l$ is a one-dimensional path.
We can easily show that $\{\hat\nu_{\epsilon}\}_{\epsilon>0}$ also satisfies a large deviation principle
as $\epsilon \searrow 0$ with a good rate function $\hat{ \mathcal{I}}$.
We will use this in \lref{lm.ldpcut} below to show that
we may localize on a neighborhood of
the minimizer $\bar{\gamma}$ in order to obtain the asymptotic expansion.
(The existence of $\bar{\gamma}$ is assumed in \aref{ass_minimizer}.)

\subsection{The Young translation}\label{sec_1565918229}
In this subsection, we show that the Young translation is well-defined and continuous.
In this subsection, we fix $1/4<\Hurst\leq 1/2$.
Let $(1+\intPart{1/\Hurst})^{-1}<\alpha<\Hurst$
and choose $m\in\NaturalNum_+$ such that $\Hurst-\alpha>2/m$.
Set $p=1/\alpha$ and $q=(\Hurst+1/2-1/m)^{-1}$.
Then we have $1/p+1/q>1$ and $1/q-1/2-\alpha>1/m$.
Since $(\Hurst+1/2)^{-1}\leq q<2$, \cite[Corollary~1]{FrizVictoir2006b} implies
\begin{gather*}
	\|\gamma\|_{\var{q};[s,t]}
	\leq
		c
		\|\gamma\|_{W^{1/q,2}}
		(t-s)^{\frac{1}{q}-\frac{1}{2}}
	\leq
		c
		\|\gamma\|_\CM
		(t-s)^{\frac{1}{q}-\frac{1}{2}}
	\qquad
	(\gamma\in\CM).
\end{gather*}
Throughout this subsection, $c$ denotes a positive constant
and may change from line to line.

We define the Young translation
$
	\tau
	=
		(
			\tau^1,
			\dots,
			\tau^{\intPart{1/\alpha}}
		)
$
from $\geoRPsBes{\alpha,12m}(\RealNum^d)\times\CM$
to $\geoRPsBes{\alpha,12m}(\RealNum^d)$; for
$\RP{x}\in\geoRPsBes{\alpha,12m}(\RealNum^d)$ and $\gamma\in\CM$,
define
\begin{align*}
	\tau_\gamma(\RP{x})^1_{s,t}
	&=
		\RP{x}^1_{s,t}
		+\RP{\gamma}^1_{s,t},\\
	\tau_\gamma(\RP{x})^2_{s,t}
	&=
		\RP{x}^2_{s,t}
		+A^1_{s,t}
		+A^2_{s,t}
		+\RP{\gamma}^2_{s,t},\\
	\tau_\gamma(\RP{x})^3_{s,t}
	&=
		\RP{x}^3_{s,t}
		+B^1_{s,t}
		+B^2_{s,t}
		+B^3_{s,t}
		+C^1_{s,t}
		+C^2_{s,t}
		+C^3_{s,t}
		+\RP{\gamma}^3_{s,t},
\end{align*}
where
\begin{align*}
	\RP{\gamma}^1_{s,t}
	&=
		\gamma_t-\gamma_s,
	&
	\RP{\gamma}^i_{s,t}
	&=
		\int_s^t
			\RP{\gamma}^{i-1}_{s,u}
			\otimes
			d\gamma_u
	\quad (i=2,3),\\
	A^1_{s,t}
	&=
		\int_s^t
			\RP{x}^1_{s,u}
			\otimes
			d\gamma_u,
	&
	A^2_{s,t}
	&=
		\int_s^t
			\RP{\gamma}^1_{s,u}
			\otimes
			dx_u,\\
	B^1_{s,t}
	&=
		\int_s^t
			\RP{x}^2_{s,u}
			\otimes
			d\gamma_u,
	&
	B^2_{s,t}
	&=
		\int_s^t
		\int_s^v
			\RP{x}^1_{s,u}
			\otimes
			d\gamma_u
			\otimes
			dx_v,\\
	B^3_{s,t}
	&=
		-
		\int_s^t
			\RP{\gamma}^1_{s,u}
			\otimes
			d\RP{x}^2_{u,t},
	&
	C^1_{s,t}
	&=
		-
		\int_s^t
			\RP{x}^1_{s,u}
			\otimes
			d\RP{\gamma}^2_{u,t},\\
	C^2_{s,t}
	&=
		\int_s^t
		\int_s^v
			\RP{\gamma}^1_{s,u}
			\otimes
			dx_u
			\otimes
			d\gamma_v,
	&
	C^3_{s,t}
	&=
		\int_s^t
			\RP{\gamma}^2_{s,u}
			\otimes
			dx_u.
\end{align*}
We should understand the integrals in the Young sense since $1/p+1/q>1$.
At first glance, $B^3$ and $C^1$ look strange.
However, if $x$ and $\gamma$ are smooth
and $\RP{x}$ is the natural lift of $x$, we have
\begin{align*}
	B^3_{s,t}
	=
		\int_s^t
			\RP{\gamma}^1_{s,u}
			\otimes
			dx_u
			\otimes
			(x_t-x_u)
	=
		\int_s^t
		\int_s^v
			\RP{\gamma}^1_{s,u}
			\otimes
			dx_u
			\otimes
			dx_v,
\end{align*}
In the first equality, we used
$\frac{d\RP{x}^2_{u,t}}{du}=-\frac{dx_u}{du}\otimes (x_t-x_u)$.
Since a similar equality for $C^3_{s,t}$ holds,
we see $B^3$ and $C^1$ are defined appropriately.
Then we see the next proposition.
\begin{proposition}\label{prop_1530162489}
	Let $\Hurst$, $\alpha$ and $m$ be as above.
	Then, the Young translation
	$
		\tau
		\colon
		\geoRPsBes{\alpha,12m}(\RealNum^d)\times\CM
		\to
		\geoRPsBes{\alpha,12m}(\RealNum^d)
	$
	is well-defined	and continuous.
	Moreover, restricted to every bounded subset of the domain,	$\tau$ is Lipschitz continuous.
\end{proposition}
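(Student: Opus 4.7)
The plan is to reduce each summand in the definition of $\tau_\gamma(\RP{x})^i$ ($i=1,2,3$) to an iterated Young integral, apply the Young-Loeve inequality termwise, and then verify that the resulting pointwise bounds give finite $(i\alpha,12m/i)$-Besov norms. The two workhorse estimates are the embedding~\eqref{eq_1545292286}, which yields
\begin{align*}
\|\RP{x}^i\|_{\var{p/i};[s,t]}\leq c\|\RP{x}\|_{\Bes{(\alpha,12m)}}^{i}(t-s)^{i(\alpha-\frac{1}{12m})},
\end{align*}
and the Cameron-Martin estimate $\|\gamma\|_{\var{q};[s,t]}\leq c\|\gamma\|_\CM (t-s)^{\Hurst-1/m}$ recorded at the start of the subsection.

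First I would verify that every integrand is a bona fide Young integral. The prescription is to perform all integrations against $d\gamma$ before any integration against $dx$ or $d\RP{x}^2$; after each $d\gamma$-step the intermediate path inherits the $q$-variation of $\gamma$, so the outer integrations satisfy the Young threshold $1/p+1/q>1$ (or $1/q+2/p>1$ for the $d\RP{x}^2$-integration in $B^3$), both of which hold in the admissible parameter range. The ``backward'' terms $B^3$ and $C^1$ are first recast as forward iterated Young integrals via the integration-by-parts identity sketched in the excerpt.

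Second, I would apply Young-Loeve termwise: each $d\gamma$-step pulls out a factor $(t-s)^{\Hurst-1/m}$, and each $d\RP{x}^j$-step pulls out a factor $(t-s)^{j(\alpha-1/(12m))}$. Summing the exponents within a fixed summand of $\tau_\gamma(\RP{x})^i$ produces a pointwise bound $|\tau_\gamma(\RP{x})^i_{s,t}|\leq c\,(t-s)^{e_i}$ with $e_i>i\alpha+i/(12m)$; this margin comes precisely from the hypothesis $\Hurst-\alpha>2/m$. Plugging into
\begin{align*}
\|\tau_\gamma(\RP{x})^i\|_{\Bes{(i\alpha,12m/i)}}^{12m/i}=\iint_{0\le s<t\le 1}\frac{|\tau_\gamma(\RP{x})^i_{s,t}|^{12m/i}}{(t-s)^{1+12m\alpha}}\,ds\,dt,
\end{align*}
the integrand becomes $(t-s)^{12m(e_i/i-\alpha)-1}$, whose exponent exceeds $-1$ by the margin, so the double integral converges. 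Summing over $i=1,2,3$ yields a polynomial estimate
\begin{align*}
\|\tau_\gamma(\RP{x})\|_{\Bes{(\alpha,12m)}}\leq P\bigl(\|\RP{x}\|_{\Bes{(\alpha,12m)}},\|\gamma\|_\CM\bigr).
\end{align*}

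Third, for continuity and Lipschitz continuity on bounded sets, I exploit that every summand is multilinear in the components of $(\RP{x},\gamma)$; the Young-Loeve argument applied to a difference, with one factor replaced by an increment, yields a bound of the same polynomial shape in which one slot carries the Besov or Cameron-Martin distance between inputs. Finally, to see that $\tau_\gamma(\RP{x})$ lies in the geometric rough path space, I approximate by smooth inputs; there the formulas collapse to the natural lift of $x+\gamma$, which is geometric by construction, and Chen's relation persists under the Besov limit by the continuity just established together with density of smooth paths.

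The main obstacle is the level-three analysis: the six inhomogeneous summands must all be cast in forward Young form---the backward terms $B^3$ and $C^1$ in particular requiring the integration-by-parts rewriting---and the exponent accounting must preserve strictly positive margins across double nestings. This is what forces the unusually large Besov index $12m$ and explains why the standing hypothesis is sharpened to $\Hurst-\alpha>2/m$ rather than merely $\Hurst-\alpha>1/m$.
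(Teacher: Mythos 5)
Your approach is essentially the paper's: both proofs use the Besov-to-$p$-variation embedding \eqref{eq_1545292286} together with the Cameron--Martin estimate $\|\gamma\|_{\var{q};[s,t]}\lesssim\|\gamma\|_\CM (t-s)^{1/q-1/2}$ (and $1/q-1/2=\Hurst-1/m$), then apply Young--L\"oeve termwise to each of $\RP\gamma^2,\RP\gamma^3,A^i,B^i,C^i$, verify that the resulting H\"older exponent strictly exceeds $i\alpha$ via the margin coming from $\Hurst-\alpha>2/m$, and deduce the $(i\alpha,12m/i)$-Besov bound; the only cosmetic difference is that the paper passes through the H\"older-to-Besov embedding at the end whereas you plug the pointwise bound directly into the Besov double integral, which is equivalent. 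One small inaccuracy in your accounting: as one-parameter paths, $\RP x^2_{s,\cdot}$ and $\RP x^2_{\cdot,t}$ have finite \emph{$p$}-variation (not $p/2$-variation) by Chen's relation, so the correct Young threshold for $B^3$ and $C^1$ is $1/p+1/q>1$, not the weaker $2/p+1/q>1$ you state; this does not affect the outcome since $1/p+1/q>1$ holds in the admissible parameter range, but the reasoning should quote the right exponent.
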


\begin{proof}
	Let $0\leq s\leq u<v\leq t\leq 1$.
	Recall that $\RP{x}^i_{s,\cdot}$ and $\RP{x}^i_{\cdot,t}$ have finite $p$-variation
	and that
	$\|\RP{x}^i_{s,\cdot}\|_{\var{p};[u,v]}$ and $\|\RP{x}^i_{\cdot,t}\|_{\var{p};[u,v]}$ are bounded above by
	$
		c
		\|\RP{x}\|_{\var{p};[s,t]}^{i-1}
		\|\RP{x}\|_{\var{p};[u,v]}
	$
	for $i=1,2,3$.

	We show well-definedness of the map $\tau$.
	Since $1/q+1/q>1$, we can define $\RP{\gamma}^2$ in the Young sense
	and see that it is of finite $q$-variation and satisfies
	\begin{gather*}
		\|\RP{\gamma}^2_{s,\cdot}\|_{\var{q};[u,v]},\,\,
		\|\RP{\gamma}^2_{\cdot,t}\|_{\var{q};[u,v]}
		\leq
			c
			\|\gamma\|_{\var{q};[s,t]}
			\|\gamma\|_{\var{q};[u,v]}.
	\end{gather*}
	In this estimate, we used \cite[Theorem~6.8]{FrizVictoir2010}.
	By the same reason, $A^1$ and $A^2$ are well-defined in the Young sense
	and they are of finite $q$ and $p$-variation, respectively;
	more precisely, it holds that
	\begin{gather*}
		\|A^1_{s,\cdot}\|_{\var{q};[u,v]}
		\leq
			c
			\|x\|_{\var{p};[s,t]}
			\|\gamma\|_{\var{q};[u,v]},\\
		\|A^2_{s,\cdot}\|_{\var{p};[u,v]}
		\leq
			c
			\|\gamma\|_{\var{q};[s,t]}
			\|x\|_{\var{p};[u,v]}.
	\end{gather*}

	Note that
	$
		B^2_{s,t}
		=
			\int_s^t
				A^1_{s,r}
				\otimes
				dx_r
	$
	and
	$
		C^2_{s,t}
		=
			\int_s^t
				A^2_{s,r}
				\otimes
				d\gamma_r
	$
	can be defined and
	\begin{align*}
		\|B^2_{s,\cdot}\|_{\var{p};[u,v]}
		&\leq
			c
			\|A^1_{s,\cdot}\|_{\var{q};[s,t]}
			\|x\|_{\var{p};[u,v]},\\
		\|C^2_{s,\cdot}\|_{\var{q};[u,v]}
		&\leq
			c
			\|A^2_{s,\cdot}\|_{\var{p};[s,t]}
			\|\gamma\|_{\var{q};[u,v]}.
	\end{align*}
	By the same reason, we see that $B^1_{s,t}$, $B^3_{s,t}$, $C^1_{s,t}$, and $C^3_{s,t}$ are well-defined and
	\begin{align*}
		\|\RP{\gamma}^3_{s,\cdot}\|_{\var{q};[u,v]}
		\leq
			c
			\|\RP{\gamma}^2_{s,\cdot}\|_{\var{q};[s,t]}
			\|\gamma\|_{\var{q};[u,v]},\\
		\|B^1_{s,\cdot}\|_{\var{q};[u,v]}
		\leq
			c
			\|\RP{x}^2_{s,\cdot}\|_{\var{p};[s,t]}
			\|\gamma\|_{\var{q};[u,v]},\\
		\|B^3_{s,\cdot}\|_{\var{p};[u,v]}
		\leq
			c
			\|\gamma\|_{\var{q};[s,t]}
			\|\RP{x}^2_{\cdot,t}\|_{\var{p};[u,v]},\\
		\|C^1_{s,\cdot}\|_{\var{q};[u,v]}
		\leq
			c
			\|x\|_{\var{p};[s,t]}
			\|\RP{\gamma}^2_{\cdot,t}\|_{\var{q};[u,v]},\\
		\|C^3_{s,\cdot}\|_{\var{p};[u,v]}
		\leq
			c
			\|\RP{\gamma}^2_{s,\cdot}\|_{\var{q};[s,t]}
			\|x\|_{\var{p};[u,v]}.
	\end{align*}
	By recalling upper bounds of
	$\|\gamma\|_{\var{q};[u,v]}$,
	$\|\RP{\gamma}^2_{s,\cdot}\|_{\var{q};[u,v]}$,
	$\|\RP{\gamma}^2_{\cdot,t}\|_{\var{q};[u,v]}$,
	$\|x\|_{\var{q};[u,v]}$,
	$\|\RP{x}^2_{s,\cdot}\|_{\var{q};[u,v]}$,
	and $\|\RP{x}^2_{\cdot,t}\|_{\var{q};[u,v]}$,
	we have
	\begin{align*}
		|\RP{\gamma}^2_{s,t}|
		&\leq
			c\|\gamma\|_{\var{q};[s,t]}^2
		\leq
			c\|\gamma\|_\CM^2
			(t-s)^{2(\frac{1}{q}-\frac{1}{2})},\\
		|A^1_{s,t}|,|A^2_{s,t}|
		&\leq
			c
			\|\RP{x}\|_{\var{p};[s,t]}
			\|\gamma\|_{\var{q};[s,t]}\\
		&\leq
			c
			\|\RP{x}\|_{\Bes{(\alpha,12m)}}
			\|\gamma\|_\CM
			\cdot
			(t-s)^{(\alpha-\frac{1}{12m})+(\frac{1}{q}-\frac{1}{2})}.
	\end{align*}
	Here, we used \eqref{eq_1545292286}.
	Moreover, we obtain
	\begin{align*}
		|B^1_{s,t}|,|B^2_{s,t}|,|B^3_{s,t}|
		&\leq
			c
			\|\RP{x}\|_{\var{p};[s,t]}^2
			\|\gamma\|_{\var{q};[s,t]}\\
		&\leq
			c
			\|\RP{x}\|_{\Bes{(\alpha,12m)}}^2
			\|\gamma\|_\CM
			\cdot
			(t-s)^{2(\alpha-\frac{1}{12m})+(\frac{1}{q}-\frac{1}{2})},\\
		|C^1_{s,t}|,|C^2_{s,t}|,|C^3_{s,t}|
		&\leq
			c
			\|\RP{x}\|_{\var{p};[s,t]}
			\|\gamma\|_{\var{q};[s,t]}^2\\
		&\leq
			c
			\|\RP{x}\|_{\Bes{(\alpha,12m)}}
			\|\gamma\|_\CM^2
			\cdot
			(t-s)^{(\alpha-\frac{1}{12m})+2(\frac{1}{q}-\frac{1}{2})}.
	\end{align*}
	Note
	\begin{gather*}
		\left(
			\alpha-\frac{1}{12m}
		\right)
		+
		\left(
			\frac{1}{q}-\frac{1}{2}
		\right)
		=
			2\alpha
			+
			\left(
				\frac{1}{q}-\frac{1}{2}-\alpha-\frac{1}{12m}
			\right)
		>
			2\alpha,\\
		2
		\left(
			\alpha-\frac{1}{12m}
		\right)
		+
		\left(
			\frac{1}{q}-\frac{1}{2}
		\right)
		=
			3\alpha
			+
			\left(
				\frac{1}{q}-\frac{1}{2}-\alpha-\frac{2}{12m}
			\right)
		>
			3\alpha,\\
		\left(
			\alpha-\frac{1}{12m}
		\right)
		+
		2
		\left(
			\frac{1}{q}-\frac{1}{2}
		\right)
		=
			3\alpha
			+
			2
			\left(
				\frac{1}{q}-\frac{1}{2}-\alpha-\frac{1}{2\cdot 12m}
			\right)
		>
			3\alpha,\\
		\frac{1}{q}-\frac{1}{2}
		=
			\alpha
			+
			\left(
				\frac{1}{q}-\frac{1}{2}-\alpha
			\right)
		 >
		 	\alpha
	\end{gather*}
	Hence, $A^i$ and $\RP{\gamma}^2$ are of finite $2(\alpha+\delta)$-H{\"o}lder norm for some $\delta>0$
	and therefore they are of finite $(2\alpha,6m)$-Besov norm.
	We also see that $B^i$, $C^i$ and $\RP{\gamma}^3$ are of finite $(3\alpha,4m)$-Besov norm.
	Hence, $\tau$ is well-defined.

	The continuity follows
	from the continuous embedding $C_0^{\var{q}}([0,1];\RealNum^d)$
	and the continuity of the Young integration.
\end{proof}


\section{Moment estimate for Taylor expansion of the Lyons-It{\^o} map}\label{sec_1545285241}

In this section, we prove a Taylor-like expansion for $\tilde{y}^\epsilon$,
which was defined in \eqref{eq_ScaledShiftedRDEDrivenByFBm}.
The base point $\gamma\in\CM$ of the expansion is arbitrary, but fixed.
We will show that $\tilde{y}^\epsilon$ admits the next expansion
\begin{align}\label{eq_1530005180}
	\tilde{y}^\epsilon
	\sim
		\phi^0
		+\epsilon^{\kappa_1}\phi^{\kappa_1}
		+\cdots
		+\epsilon^{\kappa_k}\phi^{\kappa_k}
		+\cdots
\end{align}
as $\epsilon\searrow 0$ in appropriate senses.
Here $\kappa_k\in\Lambda_1$ and $\phi^{\kappa_k}$ is a nice Wiener functional
(for $\Lambda_1$ and $0=\kappa_0 < \kappa_1 <\kappa_2 <\cdots$, see \secref{sec_1529915726}).

Let us fix some notations for fractional order expansion \eqref{eq_1530005180}.
In this section,
$\Hurst$ is not necessarily the Hurst parameter,
but a positive parameter unless otherwise specified.
For notational simplicity, however, $\Hurst \in (0, 1/2]$ is assumed.
%
%
For such $\Hurst$, we define $0=\kappa_0 < \kappa_1 <\kappa_2 <\cdots$ in the same way as \secref{sec_1529915726}.
We will work in $1$-variation topology for a while.
The ODE that corresponds to \eqref{eq_ScaledShiftedRDEDrivenByFBm} leads
\begin{align}\label{eq_1529913945}
	\nonumber
	d\tilde{y}^\epsilon_t
	&=
		\sigma (\tilde{y}^\epsilon_t)\,
		(\epsilon d x_t + dh_t)
		+
		\epsilon^{1/\Hurst}
		b(\tilde{y}^\epsilon_t)\,dt\\
	&=
		\sigma(\tilde{y}^\epsilon_t)
		\epsilon\,
		dx_t
		+
		\bigl[
			\sigma (\tilde{y}^\epsilon_t)\,
			dh_t
			+
			\epsilon^{1/\Hurst}
			b(\tilde{y}^\epsilon_t)\,
			d\lambda_t
		\bigr]
		\quad
		\text{with}
		\quad
		\tilde{y}^\epsilon_0
		=
			a.
\end{align}
Here $x,h\in C_0^{\var{1}}([0,1];\RealNum^d)$ and $\lambda_t=t$.
Next we define $\phi^{\kappa_m}$ appeared in \eqref{eq_1530005180} formally;
for intuitive derivation of $\phi^{\kappa_m}$, see \cite[Section~7]{Inahama2016b}.
By setting $\epsilon=0$ in \eqref{eq_1529913945}, we can easily see that $\phi^0=\phi^0(h)$ satisfies
\begin{align}\label{eq_1530091699}
	d\phi^{0}_t
	&=
		\sigma(\phi^{0}_t)\,
		dh_t
	\quad
	\text{with}
	\quad
	\phi^{0}_0
	=
		a.
\end{align}
An ODE for $\phi^1=\phi^1(x,h)$ is given by
\begin{align}\label{eq_1530091711}
	d\phi^1_t
	-
	\nabla\sigma (\phi^0_t)\langle \phi^1_t, dh_t \rangle
	&=
		\sigma (\phi^0_t)\,
		dx_t
	\quad
	\text{with}
	\quad
	\phi^{1}_0
	=
		0.
\end{align}
For $\kappa_m\geq 2$, $\phi^{\kappa_m}=\phi^{\kappa_m}(x,h)$ satisfies
\begin{multline}\label{eq_1530091405}
	d\phi^{\kappa_m}_t
	-
	\nabla\sigma (\phi^0_t)\langle \phi^{\kappa_m}_t, dh_t \rangle
	=
		\sum_{k=1}^\infty
		\sum_{\kappa_{i_1}+\cdots+\kappa_{i_k}=\kappa_m-1}
			\frac{\nabla^k\sigma(\phi^0_t)}{k!}
			\langle
				\phi^{\kappa_{i_1}},
				\dots,
				\phi^{\kappa_{i_k}};
				dx_t
			\rangle\\
	\begin{aligned}
		&
			+
			\sum_{k=2}^\infty
			\sum_{\kappa_{i_1}+\cdots+\kappa_{i_k}=\kappa_m}
				\frac{\nabla^k\sigma(\phi^0_t)}{k!}
				\langle
					\phi^{\kappa_{i_1}},
					\dots,
					\phi^{\kappa_{i_k}};
					dh_t
				\rangle\\
		&
			+
			\sum_{k=1}^\infty
			\sum_{\kappa_{i_1}+\cdots+\kappa_{i_k}=\kappa_m-1/\Hurst}
				\frac{\nabla^k b(\phi^0_t)}{k!}
				\langle
					\phi^{\kappa_{i_1}},
					\dots,
					\phi^{\kappa_{i_k}}
				\rangle\,
				dt
		\quad
		\text{with}
		\quad
		\phi^{\kappa_m}_0
		=
			0.
	\end{aligned}
\end{multline}
The summations in the first term on the right-hand side is taken over all
$\kappa_{i_1},\dots,\kappa_{i_k}\in\Lambda_1\setminus\{0\}$
such that $\kappa_{i_1}+\cdots+\kappa_{i_k}=\kappa_m-1$ holds.
It is not allowed that $\kappa_{i_j}=0$.
So, the sum is actually a finite sum.
The second and the third terms should be understood in the same way.
An important observation is that the right-hand side of \eqref{eq_1530091405}
does not involve $\phi^{\kappa_{m}}$, but only $\phi^0,\dots,\phi^{\kappa_{m-1}}$.
These ODEs have a rigorous meaning and
can actually be solved by variation of constants method.
So, we inductively define $\phi^{\kappa_{m}}$
as a unique solution of \eqref{eq_1530091699}, \eqref{eq_1530091711} and \eqref{eq_1530091405}.
Finally,
set
\begin{align}\label{eq_1529915801}
	r_\epsilon^{\kappa_{k+1}}
	=
		\tilde{y}^\epsilon
		-
		(
			\phi^0
			+\epsilon^{\kappa_1}\phi^{\kappa_1}
			+\cdots
			+\epsilon^{\kappa_k}\phi^{\kappa_k}
		).
\end{align}
Then,
$
	(x,h)
	\mapsto
	(
		x,
		h,
		\lambda,
		\tilde{y}^\epsilon,
		\phi^{\kappa_0},
		\dots,
		\phi^{\kappa_k},
		r_\epsilon^{\kappa_{k+1}}
	)
$
is continuous from $C_0^{\var{1}}([0,1];\RealNum^d)^2$
to $C_0^{\var{1}}([0,1];(\RealNum^d)^{\oplus 2}\oplus\RealNum\oplus(\RealNum^n)^{k+3})$.

Furthermore, it is known that
this map extends to a continuous map with respect to the rough path topology;
more precisely, for $2 \le p <4$,
$1\le q <2$ with $1/p+1/q>1$,
\begin{multline*}
	\geoRPs{p}(\RealNum^d)
	\times
	C^{\var{q}}_0([0,1];\RealNum^d)
	\ni
		(\RP{x},h)\\
	\mapsto
	(
		\RP{x},
		\RP{h},
		\RP{\lambda},
		\tilde{\RP{y}}^\epsilon,
		\RP{\phi}^0,
	 	\RP{\phi}^{\kappa_{1}},
		\dots,
		\RP{\phi}^{\kappa_{k}},
		\RP{r}_\epsilon^{\kappa_{k+1}}
	)
	\in
	\geoRPs{p}((\RealNum^d)^{\oplus 2}\oplus\RealNum\oplus(\RealNum^n)^{k+3})
\end{multline*}
is locally Lipschitz for any $k$.
Here, \eqref{eq_1529913945} is viewed as an RDE
driven by $(\epsilon \RP{x} +\RP{h}, \RP{\lambda})$
in the same way as in \eqref{eq_ScaledShiftedRDEDrivenByFBm}.

\begin{proposition}\label{pr.map.rp2.5}
Assume $2\le p <4$, $1 \le q <2$ and $1/p+1/q>1$.
Let
	$(\RP{x},h)\in \geoRPs{p}(\RealNum^d)
	\times
	C^{\var{q}}_0([0,1];\RealNum^d)$ and
consider RDE \eqref{eq_1529913945} and keep the same notations as above.
	Then, the following hold.
	\begin{enumerate}
		\item	For any $\rho >0$ and $k=1,2,\ldots$, there exists a positive constants $C=C(\rho, k)$
				which satisfies that
				\begin{align*}
					\| (\RP{\phi}^{\kappa_{k}} )^1 \|_{\var{p}} \leq C (1+ \overline{\omega_{ \RP{x}}}^{1/p})^{\kappa_{k}}.
				\end{align*}
				for any $\RP{x} \in \geoRPs{p}(\RealNum^d)$
				and $h \in C_0^{\var{q}}([0,1];\RealNum^d)$ with $\| h\|_{\var{q}} \leq \rho$.
		\item	For any $\rho_1, \rho_2 >0$ and $k=1,2,\ldots$,
				there exists a positive constants $\tilde{C}= \tilde{C}(\rho_1, \rho_2, k)$, which is independent of $\epsilon$
				and satisfies that
				\begin{align*}
					\| (\RP{r}^{\kappa_{k+1}}_\epsilon)^1 \|_{\var{p}}
					\leq \tilde{C} (\epsilon + \epsilon \overline{\omega_{ \RP{x}}}^{1/p})^{\kappa_{k+1}}
				\end{align*}
				for any $\RP{x} \in \geoRPs{p}(\RealNum^d)$ with
				$
					\overline{\omega_{\epsilon\RP{x}}}^{1/p}
					=
						\epsilon \overline{\omega_{ \RP{x}}}^{1/p}
					\leq
						\rho_1
				$
				and any $h \in C_0^{\var{q}}([0,1];\RealNum^d )$
				with $\|h\|_{\var{q}} \leq \rho_2$.
	\end{enumerate}
\end{proposition}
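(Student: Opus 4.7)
The plan is to prove both parts by induction on $k$, exploiting the fact that \eqref{eq_1530091699}, \eqref{eq_1530091711} and \eqref{eq_1530091405} are linear in the ``top'' variable $\phi^{\kappa_m}$ once the drift-like term $\nabla \sigma(\phi^0)\langle \phi^{\kappa_m}, dh\rangle$ is moved to the left-hand side. Concretely, I rewrite the equation for $\phi^{\kappa_m}$ as
\[
    d\phi^{\kappa_m}_t - \nabla\sigma(\phi^0_t)\langle \phi^{\kappa_m}_t, dh_t \rangle = dF^{\kappa_m}_t,
\]
where $F^{\kappa_m}$ collects the three sums on the right-hand side of \eqref{eq_1530091405}. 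Since the homogeneous part is a linear Young equation depending only on $h$, the associated Jacobi flow $J^{(h)}$ and its inverse $K^{(h)}$ are bounded, together with their $\var{q}$-norms, by a constant depending only on $\rho \ge \|h\|_{\var{q}}$. Variation of constants then yields $\phi^{\kappa_m}_t = J^{(h)}_t \int_0^t K^{(h)}_s\,dF^{\kappa_m}_s$, and everything reduces to bounding the $\var{p}$-norm of that integral.

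For part (1), the base case $\kappa_0=0$ is immediate, since $\phi^0$ does not depend on $\RP{x}$. Assuming the stated bound for all $\phi^{\kappa_j}$ with $j<m$, each of the three families of terms in $F^{\kappa_m}$ is handled by a rough or Young integral estimate. The $d\RP{x}$-terms pick up a factor $\overline{\omega_{\RP{x}}}^{1/p}$ by standard rough-integral bounds, while the integrand has $\var{p}$-norm at most $C(\rho)(1+\overline{\omega_{\RP{x}}}^{1/p})^{\sum_j \kappa_{i_j}} = C(\rho)(1+\overline{\omega_{\RP{x}}}^{1/p})^{\kappa_m-1}$ via the inductive hypothesis and the product rule, giving the desired $(1+\overline{\omega_{\RP{x}}}^{1/p})^{\kappa_m}$. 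The $dh$-terms produce a factor $\|h\|_{\var{q}} \le \rho$ times an integrand of size $C(\rho)(1+\overline{\omega_{\RP{x}}}^{1/p})^{\kappa_m}$, and the $dt$-terms are trivial, using $\kappa_m - 1/\Hurst \le \kappa_m$.

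For part (2), set $\bar\phi^{\epsilon,k} := \phi^0 + \sum_{j=1}^k \epsilon^{\kappa_j}\phi^{\kappa_j}$ and write $r := r_\epsilon^{\kappa_{k+1}} = \tilde y^\epsilon - \bar\phi^{\epsilon,k}$. I substitute $\tilde y^\epsilon = \bar\phi^{\epsilon,k} + r$ into \eqref{eq_1529913945}, Taylor expand $\sigma(\tilde y^\epsilon)$ and $b(\tilde y^\epsilon)$ around $\phi^0$, and sort the contributions by power of $\epsilon$. By the very definition of the $\phi^{\kappa_j}$'s, every term of order $\epsilon^{\kappa_j}$ with $j\le k$ cancels, and the remainder can be organised as
\[
    dr_t = \nabla\sigma(\phi^0_t)\langle r_t, dh_t\rangle + (\textrm{$r$-linear terms with $\epsilon$-small coefficients}) + dG^\epsilon_t, \qquad r_0 = 0,
\]
where the forcing $G^\epsilon$ has $\var{p}$-size $\le \tilde C\,\epsilon^{\kappa_{k+1}}(1+\overline{\omega_{\RP{x}}}^{1/p})^{\kappa_{k+1}}$ by part (1) applied to each $\phi^{\kappa_j}$ appearing in the Taylor remainder. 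Variation of constants on the linear part, combined with absorption of the small $r$-terms using the assumption $\epsilon \overline{\omega_{\RP{x}}}^{1/p} \le \rho_1$, then delivers the claimed bound.

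The main obstacle is the bookkeeping in part (2): one must verify that each Taylor remainder of $\sigma$ and $b$ in $z := \tilde y^\epsilon - \phi^0$, when paired with the drivers $d(\epsilon\RP{x} + \RP{h})$ and $\epsilon^{1/\Hurst} dt$, really yields a forcing of order exactly $\epsilon^{\kappa_{k+1}}$. Each additional factor of $z$ contains at least one $\phi^{\kappa_j}$ with $j\ge 1$ (carrying $\epsilon^{\kappa_j}\ge \epsilon$) or a piece of $r$; each rough integral against $\epsilon\RP{x}$ contributes an extra $\epsilon$; and the $r$-linear cross-terms must be shown to be small enough to be absorbed into the homogeneous part uniformly in $\epsilon\in(0,1)$. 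Once this organisation is in place, the rough- and Young-integral estimates close the induction exactly as in part (1).
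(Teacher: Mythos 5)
The paper's own proof of this proposition is a one-line citation to \cite{Inahama2010a}, with the remark that the only change needed is replacing the index set $\NaturalNum$ there by $\Lambda_1$ here. Your sketch is, in substance, the argument that reference develops, so you and the paper are on the same route: induction in $m$, reading \eqref{eq_1530091711} and \eqref{eq_1530091405} as linear Young equations driven by $h$ with a forcing $F^{\kappa_m}$, variation of constants via the $h$-Jacobi flow $J^{(h)}, K^{(h)}$, rough/Young estimates on $F^{\kappa_m}$, and for part (2) the substitution/cancellation argument with absorption of the $r$-linear cross-terms into the linear flow.

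One piece of bookkeeping needs to be strengthened if you write this out: as stated, your inductive hypothesis controls only $\|(\RP{\phi}^{\kappa_j})^1\|_{\var{p}}$, but the $dx$-terms in $F^{\kappa_m}$ are \emph{rough} integrals whose integrands are polynomials in $\phi^0, \phi^{\kappa_{i_1}},\dots,\phi^{\kappa_{i_k}}$, and to get the clean factor $\overline{\omega_{\RP{x}}}^{1/p}$ you need these integrands to be controlled by $\RP{x}$ with quantitatively bounded controlled-path (equivalently, joint rough-path) norms --- a first-level $\var{p}$-bound alone does not close the rough-integral estimate. The paper sidesteps this by building $(\RP{x}, \RP{h}, \RP{\lambda}, \tilde{\RP{y}}^\epsilon, \RP{\phi}^0, \dots, \RP{\phi}^{\kappa_k}, \RP{r}_\epsilon^{\kappa_{k+1}})$ as a single element of $\geoRPs{p}$ (see the paragraph preceding the proposition, and the control-function formulation in \tref{thm_1529920416}), so the iterated-integral bounds all come from one control; if you run the induction by hand you should carry through the inductive hypothesis the full rough-path data of each $\phi^{\kappa_j}$ relative to $\RP{x}$, i.e.\ bounds on all levels of the joint lift, not just the path level. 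With that strengthening the argument closes, and in part (2) note that the $r$-linear terms are not $o(1)$: their coefficients are merely uniformly bounded under $\epsilon\overline{\omega_{\RP{x}}}^{1/p}\le\rho_1$ (e.g.\ $\epsilon^{\kappa_j}(1+\overline{\omega_{\RP{x}}}^{1/p})^{\kappa_j}\lesssim(\rho_1\vee 1)^{\kappa_j}$), so the absorption must go through a Gronwall-type estimate on the resulting linear RDE rather than through smallness.
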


\begin{proof}
This is essentially shown in \cite{Inahama2010a}.
The only difference is that the index set in  \cite{Inahama2010a}
is $\NaturalNum$
while it is $\Lambda_1$ here.
But, this is a trivial issue.
\end{proof}

Now, we provide an $L^r$-version of the above proposition.
By integrability lemmas of $\overline{\omega_{\RP{x}}}$ and $N_{\delta}(\RP{x})$,
which are found in \cite[Proposition~7]{FrizOberhauser2010} and \cite[Thereom~6.3 and Remark~6.4]{CassLittererLyons2013}, respectively,
the following theorem applies to
fractional Brownian rough path $\RP{w}$ and $h \in \CM$
when $\Hurst \in (1/4, 1/2]$.
More precisely, for every $\Hurst \in (1/4, 1/2]$,
and $p > 1/\Hurst$ that is sufficiently close to $1/\Hurst$,
we can let $\Hurst$ be the Hurst parameter itself
and take $\RP{x} =\RP{w}$ and $h \in \CM$
in \tref{thm_1529920416} below.

\begin{theorem}\label{thm_1529920416}
Assume $2\le p <4$, $1 \le q <2$ and $1/p+1/q>1$.
Let $h \in C_0^{\var{q}}([0,1];\RealNum^d)$
%
and let $\RP{x}$ be a $\geoRPs{p}(\RealNum^d)$-valued random variable
	such that $\overline{\omega_{\RP{x}}}=\omega_{\RP{x}} (0,1) \in \bigcap_{1 \leq r <\infty} L^r$
	and $\exp(N_{\delta}(\RP{x})) \in \bigcap_{1 \leq r <\infty} L^r$ for every $\delta >0$.
We consider RDE \eqref{eq_1529913945}.

	Then, for any $\RP{x}$, $h$ and $k \in \NaturalNum$,
	there exist control functions $\eta_k = \eta_{k,\RP{x}, h }$
	such that the following hold:
	\begin{enumerate}
		\item	$\eta_{k}$ are non-decreasing in $k$, i.e.,
				$\eta_{k,\RP{x}, h}(s,t) \leq \eta_{k+1,\RP{x}, h} (s,t)$
				for all $k, \RP{x}, h, (s,t)$.
		\item	$\overline{\eta_{k,\RP{x}, h} } \in \bigcap_{1 \leq r <\infty} L^r$ for all $k, h$.
		\item	For all $\epsilon \in (0,1]$, $k\in \NaturalNum$, $h$, $\RP{x}$,
				$0 \leq s \leq t \leq 1$, and $1\leq j\leq \intPart{p}$,
				we have
				\begin{align*}
					\bigl|
						\bigl(
							\RP{x},
							\RP{h},
							\tilde{\RP{y}}^\epsilon,
							\RP{\phi}^0,
							\RP{\phi}^{\kappa_1},
							\dots,
							\RP{\phi}^{\kappa_k},
							\epsilon^{-\kappa_{k+1}}\RP{r}_\epsilon^{\kappa_{k+1}}
						\bigr)^j_{s,t}
					\bigr|
					\leq
						\eta_{k,\RP{x}, h} (s,t)^{j/p}.
				\end{align*}
	\end{enumerate}
	In particular, for all $k\in \NaturalNum$ and $h$,
	$\| (\RP{\phi}^{\kappa_{k}})^1\|_{\var{p}} \in \bigcap_{1 \leq r <\infty} L^r$
	and $ \| (\RP{r}_\epsilon^{\kappa_{k+1}})^1 \|_{\var{p}} = O (\epsilon^{\kappa_{k+1}})$ in $L^r$
	for any $1 \leq r <\infty$.
\end{theorem}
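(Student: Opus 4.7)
The strategy is to upgrade the deterministic pathwise bounds of \pref{pr.map.rp2.5} to global control-function estimates via the Cass--Litterer--Lyons localization technique, using that $\exp(N_\delta(\RP{x}))$ has all moments. Fix a small $\delta>0$ (for instance $\delta =1$, since $\epsilon \le 1$) and consider the greedy stopping times $\{\tau_i(\delta)\}_{i=0}^{N_\delta(\RP{x})+1}$ defined in the excerpt, which partition $[0,1]$ into at most $N_\delta(\RP{x})+1$ pieces on each of which $\omega_{\RP{x}}(\tau_i,\tau_{i+1})\le \delta$. For $\epsilon \in (0,1]$ this forces $\overline{\omega_{\epsilon \RP{x}}}^{1/p}\!\!\mid_{[\tau_i,\tau_{i+1}]} \le \delta^{1/p}$. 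Since the fixed Cameron--Martin-type element $h$ satisfies $\|h\|_{\var{q};[\tau_i,\tau_{i+1}]}\le \|h\|_{\var{q}}<\infty$, the hypotheses of \pref{pr.map.rp2.5} hold on every subinterval with $\rho_1=\delta^{1/p}$ and $\rho_2=\|h\|_{\var{q}}$, uniformly in $i$ and $\epsilon\in(0,1]$. This yields
\begin{align*}
\|(\RP{\phi}^{\kappa_j})^1\|_{\var{p};[\tau_i,\tau_{i+1}]}\le C_j,
\qquad
\|\epsilon^{-\kappa_{k+1}}(\RP{r}_\epsilon^{\kappa_{k+1}})^1\|_{\var{p};[\tau_i,\tau_{i+1}]}\le \tilde C_k,
\end{align*}
with constants independent of $i$ and $\epsilon$.

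Next I would upgrade these first-level bounds to bounds on all levels $j=1,\dots,\intPart{p}$. Each $\RP{\phi}^{\kappa_j}$ and each remainder $\RP{r}_\epsilon^{\kappa_{k+1}}$ is the rough-path lift of the solution of a linear/affine RDE driven by the joint rough path $(\RP{x},\RP{h},\RP{\lambda})$ whose coefficients are smooth polynomial expressions in the previously constructed $\RP{\phi}^{\kappa_i}$ (this is exactly how \iref{eq_1530091699}--\iref{eq_1530091405} and the definition of $\RP{r}_\epsilon^{\kappa_{k+1}}$ are built). Standard rough-integral estimates (Lyons' continuity/universal-limit theorem) on each $[\tau_i,\tau_{i+1}]$ therefore give $\|(\RP{\phi}^{\kappa_j})^l\|_{\var{p/l};[\tau_i,\tau_{i+1}]}\le C_j^l$ and $\|\epsilon^{-\kappa_{k+1}}(\RP{r}_\epsilon^{\kappa_{k+1}})^l\|_{\var{p/l};[\tau_i,\tau_{i+1}]}\le \tilde C_k^l$ for every $l\le\intPart{p}$, still uniformly in $i$ and $\epsilon$. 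The same reasoning, applied directly to the driving RDE \iref{eq_1529913945}, gives analogous local bounds for $\tilde{\RP{y}}^\epsilon$; the bounds for $\RP{x}$ and $\RP{h}$ are tautological.

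Now define, using the natural control function \iref{nat_control.def},
\begin{align*}
\eta_{k,\RP{x},h}(s,t)
=
\omega_{\RP{x}}(s,t)
+\omega_{\RP{h}}(s,t)
+\omega_{\tilde{\RP{y}}^\epsilon}(s,t)
+\sum_{j=0}^{k}\omega_{\RP{\phi}^{\kappa_j}}(s,t)
+\omega_{\epsilon^{-\kappa_{k+1}}\RP{r}_\epsilon^{\kappa_{k+1}}}(s,t).
\end{align*}
Each summand is a control function (so $\eta_k$ is superadditive and continuous), and by construction $\eta_k \le \eta_{k+1}$. The third property of the theorem follows immediately from the inequality $|(\RP{z})^j_{s,t}|\le \|\RP{z}^j\|_{\var{p/j};[s,t]} \le \omega_{\RP{z}}(s,t)^{j/p}\le \eta_k(s,t)^{j/p}$ applied to each object in the tuple. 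It remains to verify $\overline{\eta_k}\in L^r$ for every $r<\infty$. By superadditivity of each $\omega_{\RP{z}}^{p/j}$ across the partition, the local bounds of the previous paragraph give $\omega_{\RP{z}}(\tau_i,\tau_{i+1})\le C$, hence
\begin{align*}
\overline{\omega_{\RP{z}}}\le \sum_{i=0}^{N_\delta(\RP{x})}\omega_{\RP{z}}(\tau_i,\tau_{i+1})\le C(N_\delta(\RP{x})+1),
\end{align*}
for each of the objects except $\RP{x}$ itself (which is handled directly by assumption). Summing,
\begin{align*}
\overline{\eta_{k,\RP{x},h}}\le \overline{\omega_{\RP{x}}}+C_k(N_\delta(\RP{x})+1),
\end{align*}
and both summands lie in $\bigcap_{1\le r<\infty}L^r$ by the hypotheses on $\overline{\omega_{\RP{x}}}$ and $\exp(N_\delta(\RP{x}))$. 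The final ``in particular'' assertions of the theorem then follow since $\epsilon^{-\kappa_{k+1}}\RP{r}_\epsilon^{\kappa_{k+1}}$ is controlled by $\eta_k$ independently of $\epsilon$.

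\textbf{Main obstacle.} The substantive point is not the localization, which is routine, but the passage from the first-level pathwise estimate of \pref{pr.map.rp2.5} to \emph{uniform-in-$\epsilon$} bounds on the higher levels of the remainder $\RP{r}_\epsilon^{\kappa_{k+1}}$. Because $\RP{r}_\epsilon^{\kappa_{k+1}}$ is a difference of objects of different orders in $\epsilon$, one must carefully verify that the higher-level iterated integrals defining its lift also scale with the predicted power $\epsilon^{\kappa_{k+1}}$; this is where the Taylor structure of the equations \iref{eq_1530091699}--\iref{eq_1530091405} and the linearity of the equations for the $\RP{\phi}^{\kappa_j}$ are crucial, and where one genuinely needs to inspect the third-level Young-translation computations of \pref{prop_1530162489} when $\Hurst \in (1/4,1/3]$.
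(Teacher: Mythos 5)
Your plan correctly identifies the two inputs (the deterministic Taylor estimate of \pref{pr.map.rp2.5} and integrability of $N_\delta(\RP{x})$), but the way you try to combine them has two genuine gaps, one of which breaks the argument as written.

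The first gap is your claim that \pref{pr.map.rp2.5} applies ``on every subinterval $[\tau_i,\tau_{i+1}]$ with $\rho_1=\delta^{1/p}$'' yielding bounds \emph{uniform in $i$}. That proposition is stated for the full interval with the prescribed initial data $\phi^{\kappa_m}_0=0$ (see \eqref{eq_1530091711}--\eqref{eq_1530091405}). Restarting the hierarchy at time $\tau_i$ means restarting with initial data $\phi^{\kappa_m}_{\tau_i}\neq 0$, and the constants in any local version of \pref{pr.map.rp2.5} would then have to depend on those values, which a priori grow from one greedy interval to the next. Controlling this growth is precisely the content of the Cass--Litterer--Lyons induction and is \emph{not} ``routine localization'' as you assert; your sketch simply omits it. The second gap is the inequality $\overline{\omega_{\RP{z}}}\le\sum_i\omega_{\RP{z}}(\tau_i,\tau_{i+1})$. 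Superadditivity of a control goes in the opposite direction: $\sum_i\omega_{\RP{z}}(\tau_i,\tau_{i+1})\le\overline{\omega_{\RP{z}}}$, so this step as written is false. For $p$-variation controls one \emph{can} obtain an upper bound of the form $\overline{\omega_{\RP{z}}}\le c\,(N_\delta+1)^{\text{poly}}\max_i\omega_{\RP{z}}(\tau_i,\tau_{i+1})$ by inserting the $\tau_i$'s into an arbitrary partition and using Chen's relation on the higher levels, but that takes a separate argument and produces a polynomial (not linear) factor in $N_\delta$; you would also need to handle the cross-level terms of the \emph{joint} lift $(\RP{x},\RP{h},\tilde{\RP{y}}^\epsilon,\dots)$, which are not bounded merely by summing the component controls but require a Young-pairing estimate.

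The paper's own proof avoids both problems: for $k=0$ it writes the RDE for $\epsilon^{-1}r^1_{\epsilon,\cdot}$ driven by the joint rough path, splits the driver into its $dx$-part (a rough integral of a one-form along $(\RP{x},\RP{h},\tilde{\RP{y}}^\epsilon,\RP{\phi}^0)$, producing a control $\eta'_{\RP{x},h}$ depending only on $\omega_{\RP{x}}$ and $\omega_h$) and its $dh,\,dt$-parts (Young integrals, producing a control $\eta''_{\RP{x},h}$ that additionally depends on $N_\delta(\RP{x})$), and then combines via the Young-pairing control estimate. The control function falls out of the RDE/rough-integral stability estimates directly, without restarting \pref{pr.map.rp2.5} on subintervals; the integrability (item (2) of the theorem) then follows from $\overline{\omega_{\RP{x}}}\in\bigcap_r L^r$ and $\exp(N_\delta(\RP{x}))\in\bigcap_r L^r$. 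You would do better to follow that route: it is both cleaner and actually closes the gaps you noticed in your ``main obstacle'' paragraph.
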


\begin{proof}
	One can prove this by combining
	\pref{pr.map.rp2.5} above and computing $N_{\delta}(\RP{x})$
	in the same way as \cite{Inahama2016c}.
	For $k=0$, \eqref{eq_1529913945} and \eqref{eq_1530091699} imply
	\begin{align*}
		\frac{1}{\epsilon}
		dr^1_{\epsilon,t}
		&=
			\sigma(\tilde{y}^\epsilon_t)\,dx_t
			+
			\frac{1}{\epsilon}
			\{\sigma(\tilde{y}^\epsilon_t)-\sigma(\phi^0_t)\}\,
			dh_t
			+
			\epsilon^{1/\Hurst-1}
			b(\tilde{y}^\epsilon_t)\,
			dt.
	\end{align*}
	Since the first term in the right-hand side
	can be interpreted as a rough integration of one-form
	along $(\RP{x},\RP{h},\tilde{\RP{y}}^\epsilon,\RP{\phi}^0)$,
	there exists a control function $\eta_{\RP{x}, h}'$ determined by
	$\omega_{\RP{x}}$ and $\omega_h$ such that
	\begin{align*}
		\bigl|
			\bigl(
				\RP{x},
				\RP{h},
				\tilde{\RP{y}}^\epsilon,
				\RP{\phi}^0,
				\int
					\sigma(\tilde{\RP{y}}^\epsilon)\,
					\RP{x}
			\bigr)^j_{s,t}
		\bigr|
		\leq
			\eta_{\RP{x}, h}' (s,t)^{j/p}
	\end{align*}
	for $1\leq j\leq \intPart{p}$.
	The second and third terms can admit
	\begin{align*}
		\left|
			\frac{1}{\epsilon}
			\int_s^t
				\{\sigma(\tilde{y}^\epsilon_u)-\sigma(\phi^0_u)\}\,
				dh_u
			+
			\epsilon^{1/\Hurst-1}
			\int_s^t
				b(\tilde{y}^\epsilon_u)\,
				du
		\right|
		\leq
			\eta_{\RP{x}, h}'' (s,t)^{1/q},
	\end{align*}
	where $\eta_{\RP{x}, h}''$ is a control function
	determined by $\omega_{\RP{x}}$, $\omega_h$ and $N_{\delta}(\RP{x})$.
	These three equations correspond to (3.16), (3.17) and (3.20) in \cite{Inahama2016c}.
	Hence, the assertion for $k=0$ holds
	from these estimates and a property of Young pairing
	($\eta_{1,\RP{x},h}$ appeared in the assertion consists of
	$\eta_{\RP{x}, h}'$ and $\eta_{\RP{x}, h}''$).
	For detail of the discussion above and proof of the assertion for general $k$,
	we consult on \cite[Section~3]{Inahama2016c}.
\end{proof}

\section{Malliavin calculus for solution of RDE driven by fBm}\label{sec_1545285261}

Our main purpose in this section is to prove
that $y^\epsilon_1$ and $\tilde{y}^\epsilon_1$ are non-degenerate uniformly in the sense of Malliavin in $\epsilon$.
Here, $y^\epsilon$ and $\tilde{y}^\epsilon$ are
solutions to \eqref{eq_ScaledRDEDrivenByFBm} and \eqref{eq_ScaledShiftedRDEDrivenByFBm}, respectively.
Throughout this section, $\gamma\in\CM$ in the definition of $\tilde{y}^\epsilon_t$
in \eqref{eq_ScaledShiftedRDEDrivenByFBm} will be fixed.
%
%

\subsection{Notation and results of this section}
We recall Watanabe's theory of generalized Wiener functionals (Watanabe distributions) in Malliavin calculus.
We borrow the notations used in this paper from Ikeda and Watanabe \cite[V.8-V.10]{IkedaWatanabe1989}.
We also refer to Nualart \cite{Nualart2006}, Shigekawa \cite{Shigekawa2004},
Matsumoto and Taniguchi \cite{MatsumotoTaniguchi2017} and Hu \cite{Hu2017}.

We introduce a measure.
Let $\prob=\prob^\Hurst$ be the law of the fBm with Hurst parameter $\Hurst$.
This is a probability measure on $\probSp= \overline{\CM}$,
which is the closure of the Cameron-Martin space $\CM=\CM^\Hurst$ in $C_0^{\var{p}}([0,1];\RealNum^d)$.
Then, the triple $(\probSp, \CM, \prob)$ is an abstract Wiener space.
We denote by $\SobSp{r}{s}(K)$ the $K$-valued Gaussian-Sobolev space,
where $1<r<\infty$, $s\in\RealNum$ and $K$ is a real separable Hilbert space.
Note that $r$ and $s$ stands for the integrability index and the differentiability index, respectively.
As usual, we set
the spaces $\SobSp{\infty}{}(K)=\bigcap_{s=1}^\infty\bigcap_{1<r<\infty}\SobSp{r}{s}(K)$
and $\tilSobSp{\infty}{}(K)=\bigcap_{s=1}^\infty\bigcup_{1<r<\infty}\SobSp{r}{s}(K)$ of test functions
and the spaces $\SobSp{-\infty}{}(K)=\bigcup_{s=1}^\infty\bigcup_{1<r<\infty}\SobSp{r}{-s}(K)$
and $\tilSobSp{-\infty}{}(K)=\bigcup_{s=1}^\infty\bigcap_{1<r<\infty}\SobSp{n}{-s}(K)$ of Watanabe distributions.
For $F\in \SobSp{r}{s}(K)$, $D^kF$ denotes the $k$th derivative of $F$ for $k\in\NaturalNum_+$.

We set $V^\epsilon_0=\epsilon^{1/\Hurst}V_0$, $V^\epsilon_i=\epsilon V_i$ ($1\leq i\leq d$),
and $\sigma^\epsilon=[V^\epsilon_1,\dots,V^\epsilon_d]$,
which is viewed as an $n\times d$ matrix.
It holds that $\sigma^\epsilon=\epsilon\sigma$.
Set
$
	\tilde{A}^\epsilon(s)
	=
		\tilde{J}^\epsilon_1
		\tilde{K}^\epsilon_s
		\sigma^\epsilon(\tilde{y}^\epsilon_s)
$,
whose size is again  $n\times d$.
Let $\tilde{A}^{\epsilon,k}(s)$ and $\tilde{A}^{\epsilon,k}_i(s)$ denote
the $k$th row and the $(k,i)$-component of $\tilde{A}^\epsilon(s)$, respectively.

Under this setting, we see that $\tilde{y}^\epsilon_1$ is differentiable in the sense of Malliavin
and the derivatives admit good estimate as follows:
\begin{proposition}\label{prop_1529919750}
	We have $\tilde{y}^\epsilon_1\in\SobSp{\infty}{}(\RealNum^n)$
	and
	\begin{align}\label{eq_1526971767}
		\langle D\tilde{y}^{\epsilon,k}_1,h \rangle_{\CM}
		=
			\phi_{\tilde{A}^{\epsilon,k}}(h)
		=
			\sum_{i=1}^d
				\int_0^1
					\tilde{A}^{\epsilon,k}_i(s)\,
					dh^i_s
		\qquad
		(k=1,\dots,n).
	\end{align}
	In addition, for any $m=0,1,2,\ldots$ and $1<r<\infty$,
	there exists a positive constant $c=c_{m,r}$ such that
	\begin{align*}
		\expect[\| D^m \tilde{y}^{\epsilon,k}_1 \|_{ \CM^{\otimes m} }^r]^{1/r}
		\leq
			c
			\epsilon^m
		\qquad
		(k=1,\dots,n).
	\end{align*}
\end{proposition}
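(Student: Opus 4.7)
The plan is to prove the proposition in three stages: (i) Malliavin smoothness of $\tilde{y}^\epsilon_1$, (ii) the explicit first-order formula \eqref{eq_1526971767}, and (iii) the moment bound $\expect[\|D^m\tilde{y}^{\epsilon,k}_1\|_{\CM^{\otimes m}}^r]^{1/r}\leq c\epsilon^m$ with a constant uniform in $\epsilon\in(0,1]$. Malliavin smoothness for RDE solutions driven by fractional Brownian rough path with $\Hurst\in(1/4,1/2]$ is established in \cite{CassHairerLittererTindel2015, Inahama2014}, and extends directly to the shifted driver $\epsilon\RP{w}+\RP{\gamma}$ via the Young translation \pref{prop_1530162489}, since $\gamma\in\CM$ contributes only a deterministic Young perturbation.

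For the first derivative I would perturb the driver along $h\in\CM$: replacing $\epsilon\RP{w}+\RP{\gamma}$ by $\epsilon\RP{w}+\RP{\gamma}+\eta\RP{h}$ and differentiating in $\eta$ at $0$, one obtains a linear RDE for $D_h\tilde{y}^\epsilon:=\langle D\tilde{y}^\epsilon,h\rangle_\CM$ whose inhomogeneous term is $\epsilon\sigma(\tilde{y}^\epsilon_t)\,dh_t$; the prefactor $\epsilon$ is the coupling strength at which $h$ enters the driver. Solving this linear RDE by variation of constants via the Jacobi process $\tilde{J}^\epsilon$ and its inverse $\tilde{K}^\epsilon$ (which satisfy the Jacobian system analogous to \eqref{eq_1539835811}--\eqref{eq_1539835827} with driver $(\epsilon\RP{w}+\RP{\gamma},\epsilon^{1/\Hurst}\RP{\lambda})$ and unscaled vector fields $V_i$) gives
\begin{align*}
\langle D\tilde{y}^{\epsilon,k}_1,h\rangle_\CM
= \sum_{i=1}^d \int_0^1 \bigl[\tilde{J}^\epsilon_1\tilde{K}^\epsilon_s\sigma^\epsilon(\tilde{y}^\epsilon_s)\bigr]^k_i\,dh^i_s
= \phi_{\tilde{A}^{\epsilon,k}}(h),
\end{align*}
the identity $\sigma^\epsilon=\epsilon\sigma$ absorbing the factor $\epsilon$ into the kernel. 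Since $\tilde{A}^{\epsilon,k}$ has finite $p$-variation for some $p$ slightly larger than $1/\Hurst$, \rref{rem_1519281960} certifies that it defines a bona fide element of $\CM^\ast$, establishing \eqref{eq_1526971767}.

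Iterating this construction yields, for each $m$, a multilinear representation of $D^m\tilde{y}^\epsilon_1$ as an iterated Young/rough integral against $dh_1\otimes\cdots\otimes dh_m$ with kernel built from polynomial expressions in $\tilde{J}^\epsilon,\tilde{K}^\epsilon,\nabla^j\sigma^\epsilon(\tilde{y}^\epsilon_\cdot)$ and $\nabla^jV^\epsilon_0(\tilde{y}^\epsilon_\cdot)$. Each of the $m$ perturbations contributes exactly one factor of $\sigma^\epsilon=\epsilon\sigma$, so the kernel carries precisely the prefactor $\epsilon^m$. The uniform $L^r$-moment bound then follows by applying \tref{thm_1529920416} to $\epsilon\RP{w}$ (uniform integrability of $\overline{\omega_{\epsilon\RP{w}}}$ and $\exp(N_\delta(\epsilon\RP{w}))$ for $\epsilon\in(0,1]$ follows from the corresponding integrability for $\RP{w}$ itself, together with the scaling $\omega_{\epsilon\RP{w}}\leq \omega_{\RP{w}}$) to control $p$-variation or H\"older norms of $\tilde{y}^\epsilon,\tilde{J}^\epsilon,\tilde{K}^\epsilon$ and their spatial derivatives uniformly in $\epsilon$, and then converting the pathwise estimate on the kernel into a $(\CM^\ast)^{\otimes m}$-norm bound via \rref{rem_1519281960}.

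The principal obstacle is clean $\epsilon$-bookkeeping: one must rule out hidden factors of $\epsilon^{-k}$ appearing either in the Jacobi system or when translating Young-pairing bounds into $\CM^\ast$ estimates. In the Jacobi step this is avoided because the RDEs for $\tilde{J}^\epsilon,\tilde{K}^\epsilon$ are driven by $(\epsilon\RP{w}+\RP{\gamma},\epsilon^{1/\Hurst}\RP{\lambda})$ with smooth bounded coefficients $\nabla V_i,\nabla V_0$ independent of $\epsilon$, so no negative power of $\epsilon$ ever arises in their moment estimates for $\epsilon\leq 1$; in the conversion step the bound in \rref{rem_1519281960} is linear in the kernel, so the factor $\epsilon^m$ exits cleanly and produces the claimed estimate.
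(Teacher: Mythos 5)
Your proposal follows essentially the same route the paper gestures at with its one-line citation to \cite{Inahama2014}: linearize the RDE along a Cameron-Martin direction, solve by variation of constants with the Jacobi process $\tilde{J}^\epsilon,\tilde{K}^\epsilon$, read off the kernel $\tilde{A}^\epsilon$, iterate for higher derivatives, and use the uniform (in $\epsilon$) integrability of $\overline{\omega_{\RP{w}}}$ and $\exp N_\delta(\RP{w})$ together with \rref{rem_1519281960} to obtain the $L^r$-bounds. The argument and the $\epsilon$-bookkeeping are correct.

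One slip in the writeup: the Cameron-Martin perturbation behind the Malliavin derivative is $w\mapsto w+\eta h$, so the perturbed driver should be $\epsilon(\RP{w}+\eta\RP{h})+\RP{\gamma}=\epsilon\RP{w}+\RP{\gamma}+\epsilon\eta\RP{h}$, not $\epsilon\RP{w}+\RP{\gamma}+\eta\RP{h}$ as you displayed. Your next sentence ("the prefactor $\epsilon$ is the coupling strength at which $h$ enters the driver") shows you have the right picture and the resulting inhomogeneous term $\epsilon\sigma(\tilde{y}^\epsilon_t)\,dh_t$ and formula \eqref{eq_1526971767} are correct, but as written the perturbation would give $\sigma(\tilde{y}^\epsilon_t)\,dh_t$ with no $\epsilon$ and hence no $\epsilon^m$ decay, so the displayed driver should be corrected to carry the factor $\epsilon$.
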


\begin{proof}
	Malliavin differentiability of solutions of RDEs driven by Gaussian rough path
	were studied in \cite{Inahama2014}.
	A slight modification of that argument proves this proposition.
\end{proof}

\begin{proposition}\label{pr.asyDinf}
	We have the following asymptotic expansion as $\epsilon \searrow 0$:
	\begin{align*}
		\tilde{y}^\epsilon_1
		\sim
			\phi^0_1
			+\epsilon^{\kappa_1}\phi^{\kappa_1}_1
			+\cdots
			+\epsilon^{\kappa_k}\phi^{\kappa_k}_1
			+\cdots
		\qquad
		\text{in $\SobSp{\infty}{}(\RealNum^n)$}.
	\end{align*}
	This means that for each $k$,
	(i) $\phi^{\kappa_k}_1 \in \SobSp{\infty}{}(\RealNum^n)$
	and (ii) $\SobSp{r}{s}$-norm of $r_{\epsilon, 1}^{\kappa_{k+1}}$
	is $O (\epsilon^{\kappa_{k+1}} )$ for any $1<r<\infty$ and $s \geq 0$.
	Here, $r_{\epsilon, 1}^{\kappa_{k+1}}$ is defined by \eqref{eq_1529915801}.
\end{proposition}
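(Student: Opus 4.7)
The overall strategy is to upgrade the pathwise $L^r$-estimates provided by \tref{thm_1529920416} to estimates in the Malliavin--Sobolev spaces $\SobSp{r}{s}$, by applying the same Taylor-expansion machinery of \secref{sec_1545285241} to all of the auxiliary rough differential equations that control the Malliavin derivatives. The key base case is \pref{prop_1529919750}, which already yields $\tilde{y}^\epsilon_1 \in \SobSp{\infty}{}(\RealNum^n)$ together with the uniform bound $\|D^m \tilde{y}^{\epsilon,k}_1\|_{\CM^{\otimes m}} \lesssim \epsilon^m$ in every $L^r$.

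For claim (i), I proceed by induction on $k$. Each $\phi^{\kappa_k}$ is defined by a linear inhomogeneous Young/rough ODE whose inhomogeneity is a polynomial in the previously constructed $\phi^{\kappa_j}$ ($j<k$), driven by $\RP{w}$, $\gamma$ and $\lambda$; see \eqref{eq_1530091699}--\eqref{eq_1530091405}. Combining the Malliavin calculus for RDEs driven by Gaussian rough paths of \cite{Inahama2014} with the $L^r$ control-function estimates of \tref{thm_1529920416} applied to the joint system consisting of $(\tilde{\RP{y}}^\epsilon, \RP{\phi}^0, \ldots, \RP{\phi}^{\kappa_k})$ together with their Jacobi processes yields $\phi^{\kappa_k}_1 \in \SobSp{\infty}{}(\RealNum^n)$.

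For claim (ii), the Malliavin derivatives of $\tilde{y}^\epsilon_1$ admit Young-integral representations in terms of Jacobi-type processes: in particular $\langle D \tilde{y}^{\epsilon,k}_1, h\rangle_\CM = \int_0^1 \tilde{A}^{\epsilon,k}_s \, dh_s$ with $\tilde{A}^\epsilon = \tilde{J}^\epsilon_1 \tilde{K}^\epsilon_\bullet \sigma^\epsilon(\tilde{y}^\epsilon_\bullet)$ (cf.~\eqref{eq_1526971767}), and the $m$th derivative has an analogous representation involving higher-order variation processes. The Taylor-expansion construction of \secref{sec_1545285241} applies verbatim to the augmented rough differential system for $(\tilde{y}^\epsilon, \tilde{J}^\epsilon, \tilde{K}^\epsilon, \ldots)$, producing an expansion of these processes in powers of $\epsilon^{\kappa_j}$ whose pathwise remainder \tref{thm_1529920416} controls in $L^r$ at rate $O(\epsilon^{\kappa_{k+1}})$. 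Working in H\"older topology and using the embedding $C^{\Hol{\Hurst-\delta}} \hookrightarrow \tilde{\CM}\cong \CM$ recorded in \secref{sec_1545285214}, the resulting pathwise bounds convert to $\CM^{\otimes m}$-norm estimates, giving $\|D^m r_{\epsilon,1}^{\kappa_{k+1}}\|_{\CM^{\otimes m}} = O(\epsilon^{\kappa_{k+1}})$ in every $L^r$.

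The main obstacle I anticipate is the combinatorial bookkeeping required to identify the correct augmented rough differential system for higher Malliavin derivatives and to verify that its Taylor coefficients still satisfy the structural hypotheses of \tref{thm_1529920416}. Concretely this relies on: (a) the uniform control-function estimates provided by that theorem; (b) the integrability of $\overline{\omega_{\RP{w}}}$ and $\exp(N_\delta(\RP{w}))$ for fractional Brownian rough path, which renders those bounds $L^r$-integrable for every $r<\infty$; and (c) the H\"older-to-$\CM$ embedding that transfers pathwise bounds to Sobolev bounds. With these ingredients in place, a joint induction on the derivative order $m$ and the expansion order $k$ completes the proof.
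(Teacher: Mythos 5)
The paper disposes of this proposition in a single sentence, citing four ingredients: Theorem~\ref{thm_1529920416}, Proposition~\ref{prop_1529919750}, Meyer's inequality, and the observation that $\phi^{\kappa_k}_1$ lies in the inhomogeneous Wiener chaos of order $\lfloor \kappa_k \rfloor$. Your proposal uses the first two, but it entirely misses the last two, and that is the crux of the matter: the chaos observation makes claim (i) immediate (a polynomial in iterated Young/rough integrals of $\RP{w}$ and $\gamma$ lies in a fixed finite chaos, hence automatically in $\SobSp{\infty}{}$), and Meyer's inequality combined with interpolation upgrades the $L^r$ remainder bound of Theorem~\ref{thm_1529920416} to a Sobolev bound without ever estimating the Malliavin derivatives of $r^{\kappa_{k+1}}_{\epsilon,1}$ directly. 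Concretely: writing $r^{\kappa_{k+1}}_{\epsilon,1}=\sum_{j=k+1}^{K-1}\epsilon^{\kappa_j}\phi^{\kappa_j}_1+r^{\kappa_K}_{\epsilon,1}$ for $K$ large, the middle sum is $O(\epsilon^{\kappa_{k+1}})$ in every $\SobSp{r}{s}$ because each $\phi^{\kappa_j}_1\in\SobSp{\infty}{}$, while the tail $r^{\kappa_K}_{\epsilon,1}$ is $O(\epsilon^{\kappa_K})$ in $L^r$ and uniformly bounded in higher Sobolev norm (by Proposition~\ref{prop_1529919750}), so Meyer--interpolation gives $O(\epsilon^{\kappa_K/2})=O(\epsilon^{\kappa_{k+1}})$ after choosing $K$ appropriately.

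Your alternative plan --- expanding the augmented system $(\tilde y^\epsilon,\tilde J^\epsilon,\tilde K^\epsilon,\dots)$ together with all higher-order variation processes and reading off bounds on every $D^m r^{\kappa_{k+1}}_{\epsilon,1}$ pathwise --- is a genuinely different route, and in principle it could work, but it replaces a one-line interpolation by exactly the combinatorial bookkeeping you yourself flag as the main obstacle. Moreover your sketch stops precisely where the work begins: you assert that Theorem~\ref{thm_1529920416} ``applies verbatim'' to the derivative system and produces an expansion whose remainder is $O(\epsilon^{\kappa_{k+1}})$ in $L^r(\CM^{\otimes m})$, but that is a nontrivial claim that would need its own proof (the derivative system has a different, $\epsilon$-dependent structure through $\sigma^\epsilon=\epsilon\sigma$, and one has to check that the cancellation against $\sum_j \epsilon^{\kappa_j}D^m\phi^{\kappa_j}_1$ leaves a remainder of the right order). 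In short, the missing ideas are the finite-chaos observation for (i) and the Meyer/interpolation step for (ii); without them your argument is not an easier version of the paper's but a considerably harder one, and the hard part is left unfinished.
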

\begin{proof}
	We can show the assertion in the same way as \cite[Proposition~4.3]{Inahama2016c}.
	Indeed, it follows from \tref{thm_1529920416}, \pref{prop_1529919750},
	Meyer's inequality and the fact that $\phi^{\kappa_k}_1$ belongs
	to the $\RealNum^n$-valued inhomogeneous Wiener chaos of order $\lfloor \kappa_k\rfloor$.
\end{proof}

The Malliavin covariance matrices $Q=(Q_{kl})_{1\leq l,k\leq n}$ of $y_1$,
$Q^\epsilon=(Q^\epsilon_{kl})_{1\leq l,k\leq n}$ of $y^\epsilon_1$,
and $\tilde{Q}^\epsilon=(\tilde{Q}^\epsilon_{kl})_{1\leq l,k\leq n}$ of $\tilde{y}^\epsilon_1$
are defined by
\begin{align}\label{eq_1527058858}
	Q_{kl}
	&=
		\langle Dy^k_1,Dy^l_1\rangle_\CM,
	&
	Q^\epsilon_{kl}
	&=
		\langle Dy^{\epsilon,k}_1,Dy^{\epsilon,l}_1\rangle_\CM,
	&
	\tilde{Q}^\epsilon_{kl}
	&=
		\langle D\tilde{y}^{\epsilon,k}_1,D\tilde{y}^{\epsilon,l}_1\rangle_\CM,
\end{align}
respectively.
In this paper we do not express these covariance matrices
as two-parameter Young integrals (cf. \cite[(6.1)]{CassHairerLittererTindel2015}).
In this notation, we will state the following two propositions.

The first one is Kusuoka-Stroock type estimate
for the solution of the scaled RDE.
Although this is not surprising, there seems to be no
literature that actually proves it.

\begin{proposition}\label{prop_1526540975}
	Suppose that \aref{ass_HorCon} holds.
	Then, there are positive constants $c$ and $\mu$
	such that for every $0<\epsilon<1$ and $1<r<\infty$,
	we have
	\begin{align*}
		\expect[|\det Q^\epsilon|^{-r}]^{1/r}
		\leq
			c\epsilon^{-\mu}.
	\end{align*}
	Here, $c=c(r)$ can be chosen independent of $\epsilon$,
	while $\mu$ is independent of both $r$ and $\epsilon$.
\end{proposition}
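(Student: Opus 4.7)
The plan is to adapt the strategy of \cite{CassHairerLittererTindel2015, BaudoinOuyangZhang2015} to the scaled RDE \eqref{eq_ScaledRDEDrivenByFBm}, carefully tracking the $\epsilon$-dependence of every estimate. First I would put $Q^\epsilon$ in a reduced form. Because $D y^\epsilon_1$ inherits the prefactor $\epsilon$ from $\sigma^\epsilon=\epsilon\sigma$, one can write $Q^\epsilon = \epsilon^2 J^\epsilon_1 R^\epsilon (J^\epsilon_1)^\top$, where $R^\epsilon$ is the reduced Malliavin matrix built from the $\tilde{\CM}$-inner products of the rows of $K^\epsilon_\cdot\,\sigma(y^\epsilon_\cdot)$. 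Standard estimates for the Jacobi RDEs \eqref{eq_1539835811}--\eqref{eq_1539835827} with bounded $\nabla V_i$ yield $\expect[|\det J^\epsilon_1|^{-r}] \leq c(r)$ uniformly in $\epsilon\in(0,1]$, so it suffices to show $\expect[(\MinEigenVal(R^\epsilon))^{-rn}]^{1/(rn)} \leq c\,\epsilon^{-\mu'}$ for some $\mu'$ independent of $r$ and $\epsilon$; this gives the conclusion with $\mu = 2n + \mu'$.

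Next, using \rref{rem_1519281960} to bound $\|\bullet\|_{\tilde{\CM}}$ below by a constant multiple of $\|\bullet\|_{L^2}$, I would reduce the problem to showing that for every unit vector $v\in\RealNum^n$,
\[
\sum_{i=1}^d \int_0^1 \bigl| v^\top K^\epsilon_s \sigma_i(y^\epsilon_s) \bigr|^2\,ds \;\geq\; c\,\epsilon^{\mu'}
\]
with overwhelming probability. The key deterministic input is a H\"ormander-type expansion of $K^\epsilon_s\sigma_i(y^\epsilon_s)$ around $s=0$ along \eqref{eq_ScaledRDEDrivenByFBm}: an iterated Lie bracket of depth $k$ with $k_0$ factors of $V_0$ and $k-k_0$ factors of some $V_j$ ($j\geq 1$) enters with weight of order $\epsilon^{k_0/\Hurst + (k-k_0)}$. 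By \aref{ass_HorCon}, the family $\vecFields(a)$ spans $\RealNum^n$ at some finite depth $N$, so the desired lower bound holds on a short initial subinterval $[0,\delta]$ with $\mu'$ determined by $N$ and $\Hurst$, up to remainder terms controllable by higher powers of $\|\epsilon\RP{w}\|$ times $\epsilon^{\mu'}$.

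Finally, I would pass from this deterministic lower bound to the $L^r$ moment estimate via a Norris-type argument in the spirit of \cite{CassHairerLittererTindel2015, GessOuyangTindel2017}. On the event where both $\overline{\omega_{\RP{w}}}$ and $N_\delta(\RP{w})$, defined in \eqref{nat_control.def} and \eqref{eq_1529919044}, are bounded by a suitable polynomial in $\epsilon^{-1}$, the expansion yields $\MinEigenVal(R^\epsilon)\geq c\epsilon^{\mu'}$; on its complement, Gaussian integrability of $\overline{\omega_{\RP{w}}}$ (\cite{FrizOberhauser2010}) together with Weibull-type integrability of $\exp(N_\delta(\RP{w}))$ (\cite{CassLittererLyons2013}) makes the probability decay faster than any polynomial in $\epsilon$. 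The main obstacle will be Step~2: the elliptic setting of \cite{Inahama2016c} uses only the zeroth-order values $\{V_i(a)\}_{i=1}^d$ and hence gives an $\epsilon$-independent bound, whereas under \aref{ass_HorCon} iterated brackets of depth up to $N$ are unavoidable, and carefully propagating the cumulative $\epsilon$-losses from the scaling $V_0^\epsilon=\epsilon^{1/\Hurst}V_0$, $V_i^\epsilon=\epsilon V_i$ through the Norris-type bookkeeping -- in particular through the interplay between the ``drift" exponent $1/\Hurst$ and the ``diffusion" exponent $1$ -- is precisely what produces the polynomial rate $\epsilon^{-\mu}$.
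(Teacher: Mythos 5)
Your overall reduction is sound and matches the paper's: factor out $J^\epsilon_1$ and $\epsilon^2$, use the moment bounds on $J^\epsilon,K^\epsilon$ (Lemma 5.6 in the paper), pass from the Cameron-Martin inner product to $L^2$ via the continuous embedding of \rref{rem_1519281960}, and then prove a lower estimate on the reduced Malliavin matrix with a polynomial $\epsilon$-loss coming from the H\"ormander iteration. The $\epsilon$-scaling bookkeeping you propose for the brackets ($\epsilon^{k_0/\Hurst+(k-k_0)}$ for a bracket of depth $k$ with $k_0$ occurrences of $V_0$) is also exactly the structure the paper encodes through the scaled families $\vecFields^\epsilon_m$, and this is indeed where the rate $\epsilon^{-\mu}$ originates.

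However, Step 2 has a genuine gap. Bounding $\overline{\omega_{\RP{w}}}$ and $N_\delta(\RP{w})$ from above cannot yield a lower bound of the type $\MinEigenVal(R^\epsilon)\geq c\epsilon^{\mu'}$. These quantities control how \emph{big} the rough path is, but the H\"ormander lower bound on $\int_0^1\bigl|v^\top K^\epsilon_s\sigma_i(y^\epsilon_s)\bigr|^2\,ds$ needs the opposite input: the driving path must be \emph{rough enough} that the bracket terms cannot cancel. The paper makes this quantitative through the modulus of $\theta$-H\"older roughness $L_\theta(w)$ of Hairer--Pillai and Cass--Hairer--Litterer--Tindel, which enters inversely in the quantity $\mathcal{L}^\epsilon_w(a,\theta,1)$. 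The iterative estimate (the paper's \lref{prop_1519378363}) is a consequence of the CHLT Norris lemma, not of a short-time Taylor expansion on a small interval $[0,\delta]$, and it requires $L_\theta(w)^{-1}$ to have all moments (CHLT Cor.~5.10, HP Lemma~3). Without that ingredient, a deterministic expansion around $s=0$ plus a good/bad event splitting on $\overline{\omega_{\RP{w}}}$ cannot rule out pathological $w$ for which the iterated integrals conspire to make $\langle v,C^\epsilon v\rangle$ small while all upper bounds on $\RP{w}$ hold, so the claimed lower bound on the good event is unsupported. The paper also does not go for a deterministic lower bound at all; it proves the tail estimate $\sup_{|v|=1}\prob(\langle v,C^\epsilon v\rangle<\xi)\leq c(\epsilon^{-\nu}\xi)^r$ via \lref{prop_1519378363} and the Markov inequality, and then converts tail estimates into moment estimates through the technical Lemma~A.1; your proposal would need to replicate this conversion in some form.
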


The second one is about the
uniform non-degeneracy in the sense of Malliavin calculus
of the solution of the scaled shifted RDE
under \aref{ass_DeterministicNondeg}.
The Schilder type large deviations for fractional Brownian
rough path are used in the proof.

\begin{proposition}\label{prop_1526956280}
	Suppose that \aref{ass_HorCon} holds.
	Let $\gamma$ in the definition of $\tilde{y}^{\epsilon}$ in
	\eqref{eq_ScaledShiftedRDEDrivenByFBm}
	satisfy $Q(\gamma)\geq c\idMat$ for some $c>0$.
	Then, for every $1<r<\infty$, we have
	\begin{align*}
		\sup_{0<\epsilon<1}
			\expect[|\det \epsilon^{-2} \tilde{Q}^\epsilon|^{-r}]^{1/r}
		<
			\infty.
	\end{align*}
\end{proposition}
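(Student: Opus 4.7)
The strategy is to compare $\epsilon^{-2}\tilde Q^\epsilon$ to the deterministic matrix $Q(\gamma)$ in $\SobSp{\infty}{}$, exploit the hypothesis $Q(\gamma)\geq cI$ on a high-probability event, and handle the complement via a crude a priori bound.

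First, I would extend Proposition \ref{pr.asyDinf} to the enlarged RDE for $(\tilde y^\epsilon,\tilde J^\epsilon,\tilde K^\epsilon)$, whose coefficient vector fields are $C_b^\infty$ in the augmented state space. This yields an $\SobSp{\infty}{}$-asymptotic expansion whose zeroth-order term is the deterministic triple $(\phi^0(\gamma), J(\gamma), K(\gamma))$ driven by $\gamma$ alone. Setting
\begin{align*}
\bar A^\epsilon(s) := \epsilon^{-1}\tilde A^\epsilon(s) = \tilde J^\epsilon_1 \tilde K^\epsilon_s \sigma(\tilde y^\epsilon_s), \qquad
A^0(s) := J_1(\gamma) K_s(\gamma)\sigma(\phi^0_s(\gamma)),
\end{align*}
this gives $\| \bar A^\epsilon - A^0 \|_{\SobSp{r}{s}(\tilde\CM)} = O(\epsilon^{\kappa_1})$ for every $1<r<\infty$ and $s\geq 0$. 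By Remark \ref{rem_1519281960} and \eqref{eq_1526971767},
\begin{align*}
\epsilon^{-2}\tilde Q^\epsilon_{kl} = \langle \bar A^{\epsilon,k},\bar A^{\epsilon,l}\rangle_{\tilde\CM}, \qquad Q(\gamma)_{kl} = \langle A^{0,k},A^{0,l}\rangle_{\tilde\CM},
\end{align*}
so the error $E^\epsilon:= \epsilon^{-2}\tilde Q^\epsilon - Q(\gamma)$ satisfies $\|E^\epsilon\|_{\SobSp{r}{s}} = O(\epsilon^{\kappa_1})$ in every Sobolev norm.

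Next, Weyl's inequality gives $\MinEigenVal(\epsilon^{-2}\tilde Q^\epsilon) \geq c - \|E^\epsilon\|_{\mathrm{op}}$, so on the event $\{\|E^\epsilon\|_{\mathrm{op}} \leq c/2\}$ one has the deterministic lower bound $\det(\epsilon^{-2}\tilde Q^\epsilon) \geq (c/2)^n$. The $\SobSp{\infty}{}$-bound on $E^\epsilon$ combined with Chebyshev's inequality yields $\prob(\|E^\epsilon\|_{\mathrm{op}} > c/2) \leq C_q \epsilon^{q\kappa_1}$ for every $q\geq 1$.

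I would then establish the analogue of Proposition \ref{prop_1526540975} for the shifted covariance: $\expect[|\det \tilde Q^\epsilon|^{-r}]^{1/r} \leq C\epsilon^{-\tilde\mu}$ for some $\tilde\mu=\tilde\mu(\gamma) \geq 0$ independent of $r$. The proof mirrors that of Proposition \ref{prop_1526540975}, since H\"ormander's condition at $a$ is unaffected by the deterministic shift, and the required integrability of $\overline{\omega_{\epsilon\RP{w}+\RP{\gamma}}}$ and of $\exp N_\delta(\epsilon\RP{w}+\RP{\gamma})$ follows from the corresponding unshifted bounds together with Proposition \ref{prop_1530162489}. Rescaling gives $\expect[|\det(\epsilon^{-2}\tilde Q^\epsilon)|^{-r}]^{1/r} \leq C\epsilon^{2n-\tilde\mu}$. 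Combining by H\"older's inequality with exponents $(2,2)$,
\begin{align*}
\expect[|\det(\epsilon^{-2}\tilde Q^\epsilon)|^{-r}] \leq (2/c)^{nr} + \expect[|\det(\epsilon^{-2}\tilde Q^\epsilon)|^{-2r}]^{1/2}\,\prob(\|E^\epsilon\|_{\mathrm{op}} > c/2)^{1/2},
\end{align*}
and the second term is dominated by a product of the form $C_1\epsilon^{r(2n-\tilde\mu)} \cdot C_q^{1/2}\epsilon^{q\kappa_1/2}$, which is uniformly bounded on $\epsilon\in(0,1)$ once $q$ is chosen large enough (depending on $r,n,\tilde\mu,\kappa_1$). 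The main obstacle is the third step: the scaled-shifted RDE lacks the self-similarity that underlies Proposition \ref{prop_1526540975}, so one must carefully track how $\gamma$ enters the control functions and the Cass-Litterer-Lyons statistic $N_\delta$ used to prove integrability of the Malliavin matrix inverse.
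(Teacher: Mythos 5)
Your plan has a genuine gap at the step where you posit a shifted Kusuoka--Stroock estimate $\expect[|\det\tilde Q^\epsilon|^{-r}]^{1/r}\le C\epsilon^{-\tilde\mu}$. You flag the difficulty yourself, but it is more than bookkeeping: the proof of Proposition~\ref{prop_1526540975} works with the RDE driven by $\RP{w}$ itself and the \emph{shrinking} coefficients $V_i^\epsilon$, so the Norris-type lemma is applied with the $\epsilon$-independent roughness modulus $L_\theta(w)$ and $\mathcal{L}_w^\epsilon(a,\theta,1)$ has $\epsilon$-uniform moments (\lref{prop_1529556727}). To ``mirror'' this for the shifted equation you would have to run the same machinery with an added Cameron--Martin shift $\gamma/\epsilon$ whose norm blows up as $\epsilon\searrow 0$; the moments of the corresponding controlled-path norms are then not bounded uniformly in $\epsilon$, and there is no reason to expect a polynomial bound. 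The only tool the paper has for controlling $\expect[|\det\tilde Q^\epsilon|^{-2r}]$ is Girsanov, and it returns an \emph{exponentially} large factor $\exp\bigl((q-1)\|\gamma\|_\CM^2/(4\epsilon^2)\bigr)$, not a polynomial one.

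This is also why your Chebyshev step is too weak. The paper likewise localizes on a good event where $\epsilon^{-2}\tilde Q^\epsilon$ is bounded below---there via continuity of the deterministic map $\RP{x}\mapsto Q(\tau_\gamma(\RP{x}),\RP{k})$ near $(\RP{0},0)$ together with $Q(\gamma)\ge c\idMat$, which bypasses the $\SobSp{\infty}{}$-expansion of $E^\epsilon$ entirely---and then bounds the complementary event by the \emph{Schilder-type large deviation principle}: $\prob(U_\epsilon^\complement)\le\exp(-c_2/(2\epsilon^2))$. The exponential decay is essential, since it must beat the exponential blowup from Girsanov, with $q$ chosen close to $1$ so that $(q-1)\|\gamma\|_\CM^2<c_2$. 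Your bound $\prob(\|E^\epsilon\|_{\mathrm{op}}>c/2)\le C_q\epsilon^{q\kappa_1}$ is polynomial for each fixed $q$, and since $E^\epsilon$ is not a fixed-degree Wiener chaos you cannot optimize in $q$ to upgrade this to a Gaussian tail. To repair the argument, replace Chebyshev by the large-deviation bound on the rough path space for $\{\epsilon\RP{w}\notin V\}$ with $V$ a small neighbourhood of $\RP{0}$ (the paper's route), and use Girsanov on the complement instead of the unproven shifted Kusuoka--Stroock estimate.
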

Let $a'\in\RealNum^n$.
Then, we have
\begin{align*}
	\epsilon^{-2} \tilde{Q}^\epsilon_{kl}
	=
		\left\langle
			D
				\left(
					\frac{\tilde{y}^{\epsilon,k}_1-(a')^k}{\epsilon}
				\right),
			D
				\left(
					\frac{\tilde{y}^{\epsilon,l}_1-(a')^l}{\epsilon}
				\right)
		\right\rangle_\CM.
\end{align*}
It follows from \pref{prop_1526956280} and this identity
that $(\tilde{y}^\epsilon_1-a')/\epsilon$ with $\gamma =\bar{\gamma}$
is uniformly non-degenerate in the sense of  Malliavin calculus
under \aref{ass_HorCon}, \aref{ass_minimizer} and \aref{ass_DeterministicNondeg}.

\pref[prop_1526540975]{prop_1526956280} will be proved in \secref[subsec5.2]{sec_1542002443}, respectively.

\subsection{Covariance matrix of solution of scaled RDE driven by fBm}
\label{subsec5.2}
In this subsection, we show \pref{prop_1526540975}.
Set
$
	C^\epsilon
	=
		\int_0^1
			K^\epsilon_s \sigma^\epsilon(y^\epsilon_s)
			\{K^\epsilon_s\sigma^\epsilon(y^\epsilon_s)\}^\top\,
			ds
$.
Then we see \pref{prop_1526540975} from the following three lemmas.

\begin{lemma}\label{lem_1526961113}
	For every $0<\epsilon<1$, we have
	\begin{align*}
		\MinEigenVal(Q^\epsilon)
		\geq
			c
			\MinEigenVal(C^\epsilon)
			\MinEigenVal(J^\epsilon_1(J^\epsilon_1)^\top).
	\end{align*}
	Here, $c$ is a positive constant independent of $\epsilon$.
\end{lemma}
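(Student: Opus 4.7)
The plan is to express $Q^{\epsilon}$ as a quadratic form on $\RealNum^n$ and then use the bound from \rref{rem_1519281960} (the comparison between the $\CM^{\ast}$-norm and the $L^2$-norm for functionals of Young integral type) to replace the $\tilde{\CM}$-inner product by an ordinary $L^2$-inner product, which is easily diagonalised with respect to the Jacobi factor.

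Concretely, I would proceed as follows. First, fix $v = (v^1,\dots,v^n)^{\top} \in \RealNum^n$ and set
\begin{align*}
	B^{\epsilon}(s)
	=
		v^{\top} \tilde A^{\epsilon}(s)\bigr|_{\gamma=0}
	=
		v^{\top} J^{\epsilon}_1 K^{\epsilon}_s \sigma^{\epsilon}(y^{\epsilon}_s)
	\in
		\RealNum^d,
\end{align*}
so that by the $\epsilon$-version of \iref{eq_1526971767} one has $\langle v, Dy^{\epsilon}_1\rangle_{\CM}(h) = \sum_{i=1}^d \int_0^1 B^{\epsilon}_i(s)\,dh^i_s = \phi_{B^{\epsilon}}(h)$. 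Hence, by definition of the Malliavin matrix and the isometry $\CM^{\ast} \simeq \wienerChaos{1}$ stated in \rref{rem_1519281960},
\begin{align*}
	v^{\top} Q^{\epsilon} v
	=
		\|\phi_{B^{\epsilon}}\|_{\CM^{\ast}}^2
	\geq
		c^{-2} \|B^{\epsilon}\|_{L^2([0,1];\RealNum^d)}^2,
\end{align*}
where $c$ is the constant from \rref{rem_1519281960} (depending only on $\Hurst$, not on $\epsilon$).

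Second, I compute
\begin{align*}
	\|B^{\epsilon}\|_{L^2}^2
	=
		\int_0^1
			v^{\top} J^{\epsilon}_1 K^{\epsilon}_s \sigma^{\epsilon}(y^{\epsilon}_s)
			\bigl\{K^{\epsilon}_s \sigma^{\epsilon}(y^{\epsilon}_s)\bigr\}^{\top} (J^{\epsilon}_1)^{\top} v\,ds
	=
		v^{\top} J^{\epsilon}_1 C^{\epsilon} (J^{\epsilon}_1)^{\top} v.
\end{align*}
Setting $w = (J^{\epsilon}_1)^{\top} v$, the standard min-eigenvalue inequality gives
\begin{align*}
	v^{\top} J^{\epsilon}_1 C^{\epsilon} (J^{\epsilon}_1)^{\top} v
	=
		w^{\top} C^{\epsilon} w
	\geq
		\MinEigenVal(C^{\epsilon}) \, |w|^2
	\geq
		\MinEigenVal(C^{\epsilon})\,\MinEigenVal\bigl(J^{\epsilon}_1 (J^{\epsilon}_1)^{\top}\bigr)\,|v|^2.
\end{align*}
Combining the two displays and taking the infimum over unit vectors $v$ yields the claim with $c = c^{-2}$.

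There is no real obstacle here; the only delicate point is making sure that the isometry-based bound $\|\phi_A\|_{\CM^{\ast}}^2 \geq c^{-2}\|A\|_{L^2}^2$ of \rref{rem_1519281960} applies to $A = v^{\top} \tilde A^{\epsilon}$. This requires that the integrand have finite $p$-variation with $1/p + 1/q > 1$, which is automatic because $y^{\epsilon}$, $J^{\epsilon}$ and $K^{\epsilon}$ are outputs of RDEs driven by $(\RP{w},\RP{\lambda})$ and hence have finite $p$-variation for $p$ slightly larger than $1/\Hurst$. The constant $c$ depends only on $\Hurst$ (through the embedding $\|\cdot\|_{L^2} \le c\|\cdot\|_{\tilde\CM}$), so in particular it is independent of $\epsilon$, which is exactly what the statement requires.
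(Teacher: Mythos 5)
Your proof is correct and follows essentially the same route as the paper: express $v^\top Q^\epsilon v$ as $\|\phi_{B^\epsilon}\|_{\CM^\ast}^2$, invoke Remark~3.5 (\rref{rem_1519281960}) to pass to the $L^2$-norm, identify the resulting quadratic form as $v^\top J^\epsilon_1 C^\epsilon (J^\epsilon_1)^\top v$, and apply the minimum-eigenvalue bound twice. The only difference is cosmetic (you write $\tilde A^\epsilon|_{\gamma=0}$ where the paper directly introduces $A^\epsilon(s)= \indicator{[0,1]}(s) J^\epsilon_1 K^\epsilon_s \sigma^\epsilon(y^\epsilon_s)$); the substance is identical.
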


\begin{lemma}\label{lem_1523247628}
	Suppose that \aref{ass_HorCon} holds.
	Then, there are positive constants $c$ and $\mu$
	such that for every $0<\epsilon<1$ and $1<r<\infty$,
	we have
	\begin{align*}
		\expect[\MinEigenVal(C^\epsilon)^{-r}]^{1/r}
		\leq
			c\epsilon^{-\mu}.
	\end{align*}
	Here, $c=c(r)$ can be chosen independent of $\epsilon$,
	while $\mu$ is independent of both $r$ and $\epsilon$.
\end{lemma}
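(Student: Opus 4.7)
The plan is to reduce, via self-similarity of fBm, to a short-time Kusuoka--Stroock type estimate for the reduced Malliavin covariance matrix of the \emph{unscaled} RDE, and then to establish that estimate by the standard bracket-generating iteration with the rough-path Norris lemma.

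\textbf{Step 1 (Scaling reduction).} Since $\sigma^{\epsilon}=\epsilon\sigma$,
\[
\MinEigenVal(C^{\epsilon})=\epsilon^{2}\MinEigenVal(\widehat{C}^{\epsilon}),
\qquad
\widehat{C}^{\epsilon}:=\int_{0}^{1}K^{\epsilon}_{s}\sigma(y^{\epsilon}_{s})\bigl(K^{\epsilon}_{s}\sigma(y^{\epsilon}_{s})\bigr)^{\top}\,ds.
\]
By self-similarity of fBm, $(\epsilon w_{s})_{s\in[0,1]}\overset{d}{=}(w_{\tau s})_{s\in[0,1]}$ with $\tau:=\epsilon^{1/\Hurst}$, so the triple $(y^{\epsilon}_{s},J^{\epsilon}_{s},K^{\epsilon}_{s})_{s\in[0,1]}$ is equal in law to $(y_{\tau s},J_{\tau s},K_{\tau s})_{s\in[0,1]}$. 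A change of variable $u=\tau s$ then yields $\widehat{C}^{\epsilon}\overset{d}{=}\tau^{-1}C_{[0,\tau]}$, where $C_{[0,\tau]}:=\int_{0}^{\tau}K_{u}\sigma(y_{u})(K_{u}\sigma(y_{u}))^{\top}\,du$ is the reduced Malliavin matrix of the original RDE on $[0,\tau]$. The claim thus reduces to a short-time estimate
\[
\expect[\MinEigenVal(C_{[0,\tau]})^{-r}]^{1/r}\leq c_{r}\,\tau^{-\nu}
\qquad (\tau\in(0,1],\ r\in(1,\infty))
\]
for some exponent $\nu>0$ not depending on $r$; then $\mu:=\nu/\Hurst+1/\Hurst-2$ does the job (note $1/\Hurst-2\ge 0$ for $\Hurst\leq 1/2$).

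\textbf{Step 2 (Short-time estimate under H\"ormander).} Fix a unit vector $v\in\RealNum^{n}$ and write
\[
v^{\top}C_{[0,\tau]}v=\sum_{i=1}^{d}\int_{0}^{\tau}(v^{\top}K_{s}V_{i}(y_{s}))^{2}\,ds.
\]
Differentiating $s\mapsto v^{\top}K_{s}U(y_{s})$ via \eqref{eq_1539835811}--\eqref{eq_1539835827} produces a rough integral against $\RP{w}$ whose integrands have the same form but with $U$ replaced by the Lie brackets $[V_{j},U]$, $0\le j\le d$. Applying the rough-path Norris lemma of Hairer--Pillai~\cite{HairerPillai2013}, refined in Cass--Hairer--Litterer--Tindel~\cite{CassHairerLittererTindel2015} and Baudoin--Ouyang--Zhang~\cite{BaudoinOuyangZhang2015}, one iterates: if $\int_{0}^{\tau}(v^{\top}K_{s}V_{i}(y_{s}))^{2}\,ds$ is exceptionally small, then so is $v^{\top}K_{s}U(y_{s})$ uniformly near $s=0$ for every $U$ obtained from the $V_{i}$'s by iterated brackets up to a bounded depth. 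Evaluating at $s=0$ (where $K_{0}=\idMat$) would contradict H\"ormander's condition \aref{ass_HorCon} at $a$. The exceptional sets are shown to have super-polynomially small probability in $\tau$ using the integrability of $\overline{\omega_{\RP{w}}}$ (Friz--Oberhauser) and of $\exp(N_{\delta}(\RP{w}))$ (Cass--Litterer--Lyons) recalled in Section~\ref{sec_1545285214}; a Chebyshev argument then converts this into the $L^{r}$ bound with the desired polynomial rate in $\tau$, uniformly in $r$.

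\textbf{Main obstacle.} The delicate part is Step 2: carrying out the bracket-generating iteration via the \emph{rough} version of the Norris lemma while keeping the loss in $\tau$ polynomial and the constants uniform in $r$. For $1/4<\Hurst\leq 1/3$ one must in particular work with third-level rough paths, so the Young-integration arguments of \cite{BaudoinOuyang2011,Inahama2016b} no longer suffice; this is precisely what requires the refinements of \cite{CassHairerLittererTindel2015} in the present setting. Once Step 2 is in hand, Step 1 glues everything together to produce the lemma with $\mu$ as above.
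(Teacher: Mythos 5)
Your Step 1 is correct: the scaling $\sigma^\epsilon=\epsilon\sigma$ and fBm self-similarity do give $\MinEigenVal(C^{\epsilon})\overset{d}{=}\epsilon^{2-1/\Hurst}\MinEigenVal(C_{[0,\tau]})$ with $\tau=\epsilon^{1/\Hurst}$, so the lemma is equivalent (up to bookkeeping of exponents) to a short-time Kusuoka--Stroock bound on $[0,\tau]$. But this reduction only restates the problem; it does not discharge it. The entire content of the lemma lives in your Step 2, which is precisely the estimate the paper explicitly points out has no proof in the literature, and your outline of that step has a real hole: applying the rough Norris lemma on the shrinking interval $[0,\tau]$ requires uniform control (as $\tau\searrow 0$) of the quantity $\mathcal{L}_{w}(a,\theta,\tau)$ and, above all, of the inverse modulus of $\theta$-H\"older roughness $L_{\theta}(w\vert_{[0,\tau]})^{-1}$, which \emph{blows up} polynomially in $\tau^{-1}$. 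You invoke integrability of $\overline{\omega_{\RP{w}}}$ and $\exp(N_{\delta}(\RP{w}))$, but these are fixed-interval results; the $\tau$-dependence of $L_{\theta}^{-1}$ is never quantified, and without that the Chebyshev argument does not yield a polynomial rate with constants uniform in $r$.

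The paper's proof avoids this difficulty by a structurally different device: it never shrinks the time interval. It regards \eqref{eq_ScaledRDEDrivenByFBm} as an RDE on $[0,1]$ driven by the \emph{unscaled} $\RP{w}$ but with $\epsilon$-scaled coefficients $V_{i}^{\epsilon}=\epsilon V_{i}$, $V_{0}^{\epsilon}=\epsilon^{1/\Hurst}V_{0}$. Then the Norris-lemma control quantity $\mathcal{L}^{\epsilon}_{w}(a,\theta,1)$ lives on a fixed interval and has $L^{r}$-moments bounded uniformly in $\epsilon$ (\lref{prop_1529556727}); the bracket iteration (\lref{prop_1519378363}) transfers all the $\epsilon$-dependence into explicit scalar factors $\epsilon^{\rho_{k}}$ in front of the iterated brackets $W^{\epsilon}_{k}=\epsilon^{\rho_{k}}W_{k}$, which combined with $\phi^{\epsilon}(u)\ge\epsilon^{\rho}\phi(u)$ and the H\"ormander condition produce the polynomial rate directly. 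Your route would work only if one independently established a shrinking-interval version of the rough Norris lemma with explicit, polynomial dependence on the interval length (including the behaviour of $L_\theta^{-1}$), which is not provided here and is not available off the shelf; as it stands, the proposal does not constitute a proof.
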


\begin{lemma}[\cite{CassLittererLyons2013}]\label{lem_1523340339}
	Let $p>1/\Hurst$.
	For every $1<r<\infty$, we have
	\begin{align*}
		\sup_{0<\epsilon<1}
			\expect[\|J^\epsilon\|_{C^{\var{p}}([0,1];\RealNum^{n^2})}^r]^{1/r}
		&<
			\infty,
		&
		\sup_{0<\epsilon<1}
		\expect[\|K^\epsilon\|_{C^{\var{p}}([0,1];\RealNum^{n^2})}^r]^{1/r}
		&<
			\infty.
	\end{align*}
\end{lemma}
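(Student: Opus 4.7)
The plan is to reduce the claim to the Cass--Litterer--Lyons integrability theorem for the quantity $N_\delta$, together with a deterministic $p$-variation estimate for linear RDEs. First, I would observe that $J^\epsilon$ and $K^\epsilon$ solve the linear RDEs \eqref{eq_1539835811}--\eqref{eq_1539835827} driven by the Young pairing $\RP{x}^\epsilon := (\epsilon\RP{w}, \epsilon^{1/\Hurst}\RP{\lambda})$, whose coefficient fields $\nabla V_i(y^\epsilon_\cdot)$ and $\nabla V_0(y^\epsilon_\cdot)$ are globally bounded because $V_i \in C_b^\infty$.

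Second, I would invoke the deterministic estimate of \cite{CassLittererLyons2013} for linear RDEs: there exist $\delta_0 > 0$ and a constant $C$ depending only on $\sup_i \|\nabla V_i\|_\infty$ and $p$ such that
\begin{align*}
	\|J^\epsilon\|_{\var{p}} + \|K^\epsilon\|_{\var{p}}
	\le
	C \exp\bigl( C \, N_{\delta_0}(\RP{x}^\epsilon) \bigr).
\end{align*}
This is essentially the form in which the $N_\delta$-technology was introduced, namely to control the exponential blow-up of norms of solutions to linear equations by a count of how many subintervals of $[0,1]$ accumulate a fixed amount of driving control.

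Third, the dilation identity $(\epsilon\RP{w})^i = \epsilon^i \RP{w}^i$ combined with \eqref{nat_control.def} gives $\omega_{\epsilon \RP{w}}(s,t) = \epsilon^p \omega_{\RP{w}}(s,t)$, and similarly $\omega_{\epsilon^{1/\Hurst}\RP{\lambda}}(s,t) \le \epsilon^{p/\Hurst}\omega_{\RP{\lambda}}(s,t)$. A standard subadditivity argument then yields $\omega_{\RP{x}^\epsilon}(s,t) \le C' \omega_{(\RP{w}, \RP{\lambda})}(s,t)$ pointwise for $0 < \epsilon < 1$, whence
\begin{align*}
	N_{\delta_0}(\RP{x}^\epsilon)
	\le
	N_{\delta_0/C'}\bigl((\RP{w}, \RP{\lambda})\bigr)
	\qquad (0 < \epsilon < 1).
\end{align*}
Finally, I would appeal to \cite[Theorem~6.3 and Remark~6.4]{CassLittererLyons2013}, noting that fractional Brownian rough path with $\Hurst \in (1/4, 1/2]$ falls within their Gaussian framework, to conclude $\exp(\eta N_\delta(\RP{w})) \in L^1(\prob)$ for every $\eta, \delta > 0$. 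Combining this integrability with the deterministic bound yields the required uniform moment estimates for all $r \in (1,\infty)$.

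The main technical point, and the only one demanding care, is ensuring that the constants in the Cass--Litterer--Lyons linear-RDE estimate do not implicitly depend on $\epsilon$ through the driver norms; the scaling step just described is precisely what makes the $\epsilon$-independence explicit. Everything else is bookkeeping in norms, and the drift component $\epsilon^{1/\Hurst}\RP{\lambda}$ contributes only trivially since its $p$-variation is of order $\epsilon^{1/\Hurst}$ and therefore bounded uniformly in $\epsilon \in (0,1)$.
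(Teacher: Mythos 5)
Your proposal is correct and takes essentially the route that the citation in the lemma points to: the paper gives no proof at all and simply attributes the statement to \cite{CassLittererLyons2013}, so the content is precisely the combination of (i) the CLL deterministic bound $\|J\|_{\var{p}}+\|K\|_{\var{p}}\le C\exp(C\,N_{\delta_0}(\RP{x}))$ for linear (or linear-growth) RDEs, (ii) the $\epsilon$-monotonicity of the natural control under dilation, and (iii) the Gaussian integrability of $\exp(\eta N_\delta)$ from their Theorem~6.3. The dilation computation is even cleaner than you state: since $\epsilon<1$ and $1/\Hurst\ge 2$, each term of $(\epsilon\RP{w},\epsilon^{1/\Hurst}\RP{\lambda})^i$ carries a factor $\epsilon^j$ with $j\ge i$, so $\omega_{\RP{x}^\epsilon}(s,t)\le\epsilon^{p}\omega_{(\RP{w},\RP{\lambda})}(s,t)\le\omega_{(\RP{w},\RP{\lambda})}(s,t)$ with $C'=1$, and the inequality $N_{\delta}(\RP{x}^\epsilon)\le N_{\delta}((\RP{w},\RP{\lambda}))$ follows by the usual induction on the greedy partition. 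The one step you elide is that the integrability result you invoke is stated for the Gaussian rough path $\RP{w}$ itself, whereas your bound involves $N_\delta$ of the Young pairing $(\RP{w},\RP{\lambda})$; this is routine (since $\lambda$ is deterministic and Lipschitz one has $N_\delta((\RP{w},\RP{\lambda}))\le C\,N_{\delta'}(\RP{w})+C'$ for suitable constants), but it should be said explicitly to close the argument.
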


Now we show \pref{prop_1526540975} by using lemmas above.
\begin{proof}[Proof of \pref{prop_1526540975}]
	From \lref{lem_1526961113}, we see
	\begin{align*}
		\det Q^\epsilon
		\geq
			\MinEigenVal(Q^\epsilon)^n
		\geq
			c^n
			\MinEigenVal(C^\epsilon)^n
			\MinEigenVal(J^\epsilon_1(J^\epsilon_1)^\top)^n.
	\end{align*}
	Hence,
	\begin{align*}
		\expect[\{\det Q^\epsilon\}^{-r}]^{1/r}
		&\leq
			c^{-n}
			\expect
				[
					\MinEigenVal(C^\epsilon)^{-nr}
					\MinEigenVal(J^\epsilon_1(J^\epsilon_1)^\top)^{-nr}
				]^{1/r}\\
		&\leq
			c^{-n}
			\expect[\MinEigenVal(C^\epsilon)^{-2nr}]^{1/(2r)}
			\expect[\MinEigenVal(J^\epsilon_1(J^\epsilon_1)^\top)^{-2nr}]^{1/(2r)}.
	\end{align*}
	Noting $\MinEigenVal(J^\epsilon_1(J^\epsilon_1)^\top)^{-1}=\MaxEigenVal(K^\epsilon_1(K^\epsilon_1)^\top)$
	and applying \lref[lem_1523247628]{lem_1523340339}, we see the assertion.
\end{proof}

Next we  show  \lref{lem_1526961113}.
\begin{proof}[Proof of \lref{lem_1526961113}]
	Set
	$
		A^\epsilon(s)
		=
			\indicator{[0,1]}(s)
			J^\epsilon_1
			K^\epsilon_s
			\sigma^\epsilon(y^\epsilon_s)
	$
	and let $A^{\epsilon,k}(s)$ denote the $k$th row of $A^\epsilon(s)$.
	Then, for every $v=(v_1,\dots,v_n)^\top\in\RealNum^n$, we have
	$
		\langle v,Q^\epsilon v\rangle_{\RealNum^n}
		=
			\|
				\sum_{k=1}^n
					v_k
					Dy^{\epsilon,k}_1
			\|_\CM^2
	$.
	It follows from
	the Riesz representation theorem and \rref{rem_1519281960}
	that
	\begin{align*}
		\left\|
			\sum_{k=1}^n
				v_k
				Dy^{\epsilon,k}_1
		\right\|_\CM^2
		=
			\left\|
			\sum_{k=1}^n
				v_k
				Dy^{\epsilon,k}_1
			\right\|_{\CM^\ast}^2
		\geq
			c
			\left\|
				\sum_{k=1}^n
					v_k
					A^{\epsilon,k}
			\right\|_{L^2([0,1];\RealNum^d)}^2.
	\end{align*}
	Here, $c$ is a positive constant independent of $v$ and $\epsilon$.
	Noting
	$
		|
			\sum_{k=1}^n
				v_k
				A^{\epsilon,k}(s)
		|_{\RealNum^d}^2
		=
			\langle
				v,
				A^\epsilon(s)A^\epsilon(s)^\top v
			\rangle_{\RealNum^n}
	$,
	we have
	\begin{align*}
		\left\|
			\sum_{k=1}^n
				v_k
				A^{\epsilon,k}
		\right\|_{L^2([0,1];\RealNum^d)}^2
		=
			\int_0^1
				\langle
					v,
					A^\epsilon(s)A^\epsilon(s)^\top v
				\rangle_{\RealNum^n}\,
				ds
		=
		\langle
			v,
			J^\epsilon_1C_\epsilon (J^\epsilon_1)^\top v
		\rangle_{\RealNum^n}.
	\end{align*}
	These imply
	\begin{align*}
		\langle v,Q^\epsilon v\rangle_{\RealNum^n}
		\geq
			c
			\langle
				(J^\epsilon_1)^\top v,
				C^\epsilon
				(J^\epsilon_1)^\top v
			\rangle_{\RealNum^n}.
	\end{align*}
	Hence,
	\begin{align*}
		\langle v,Q^\epsilon v\rangle_{\RealNum^n}
		\geq
			c
			\lambda_{\min}(C^\epsilon)
			\langle
				(J^\epsilon_1)^\top v,
				(J^\epsilon_1)^\top v
			\rangle_{\RealNum^n}
		\geq
			c
			\lambda_{\min}(C^\epsilon)
			\lambda_{\min}(J^\epsilon_1(J^\epsilon_1)^\top)
			|v|^2.
	\end{align*}
	The proof has been completed.
\end{proof}

In the rest of this subsection we show \lref{lem_1523247628},
following \cite{CassHairerLittererTindel2015} closely.
To end this, we regard \eqref{eq_ScaledRDEDrivenByFBm} as an RDE driven by $\RP{w}$
with the coefficients $V^\epsilon_0$, $V^\epsilon_1,\dots,V^\epsilon_d$.
(Except in the rest of this subsection,
we regard \eqref{eq_ScaledRDEDrivenByFBm} as
an RDE driven by $\epsilon \RP{w}$ with the coefficients $V_0,V_1,\dots,V_d$.)
Keeping this in mind,
we introduce a quantity which plays an important role in a Norris-type lemma.
Note that the quantity is defined in the framework of the controlled path theory.
Let $(1+\intPart{1/\Hurst})^{-1}<\alpha<\Hurst$.
Fix $a\in\RealNum^n$ and $0<\theta<1$.
For every $0<\epsilon<1$, define
\begin{align*}
	\mathcal{L}^\epsilon_w(a,\theta,1)
	=
		1
		+L_\theta(w)^{-1}
		+|a|
		+\|(y^\epsilon,J^\epsilon,K^\epsilon)\|_{Q_{\RP{w}}^\alpha}
		+\mathcal{N}_{\RP{w},\alpha}.
\end{align*}
Here,
$\mathcal{N}_{\RP{w},\alpha}
	=
		\sum_{i=1}^{\intPart{1/\Hurst}}
			\|\RP{w}^i\|_{\Hol{i\alpha}}$
and
$Q_{\RP{w}}^\alpha$ stands for the Banach space of
controlled paths with respect to $\RP{w}$.
We refer to \cite[Definition~3]{HairerPillai2013} and \cite[Definition~5.2]{CassHairerLittererTindel2015}
for $L_\theta(w)$, which is called the modulus of $\theta$-H{\"o}lder roughness of $w$,
and to \cite[Definition~5.1]{CassHairerLittererTindel2015}
for $\|(y^\epsilon,J^\epsilon,K^\epsilon)\|_{Q_{\RP{w}}^\alpha}$.

Although we do not discuss detail of $\mathcal{L}^\epsilon_w(a,\theta,1)$ for concise,
we note that expectations of $r$th power of $\mathcal{L}^\epsilon_w(a,\theta,1)$
are bounded in $\epsilon$ (\lref{prop_1529556727})
and it gives good estimate of $\langle v, C^\epsilon v \rangle_{\RealNum^n}$ (\lref{prop_1519378363}).
Combining these two lemmas, we can prove \lref{lem_1523247628}.

Let us start to prove \lref{lem_1523247628} with the next lemma.
\begin{lemma}\label{prop_1529556727}
	For every $1<r<\infty$, we have
	\begin{align*}
		\sup_{0<\epsilon<1}
			\expect[\mathcal{L}_w^\epsilon(a,\theta,1)^r]
		<
			\infty.
	\end{align*}
\end{lemma}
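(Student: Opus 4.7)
The quantity $\mathcal{L}^\epsilon_w(a,\theta,1)$ decomposes into four pieces: the constant $1+|a|$, the inverse roughness $L_\theta(w)^{-1}$, the Hölder norm $\mathcal{N}_{\RP{w},\alpha}$, and the controlled path norm $\|(y^\epsilon,J^\epsilon,K^\epsilon)\|_{Q_{\RP{w}}^\alpha}$. My plan is to handle each piece separately, using Minkowski's inequality to reduce to bounding each in $L^r$ uniformly in $\epsilon$. The first piece is deterministic; the second and third do not depend on $\epsilon$ at all, so $\epsilon$-uniformity is automatic. Thus the only genuine task is the fourth piece, and I would devote the bulk of the proof to it.

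For $L_\theta(w)^{-1}$, I would invoke the Hairer--Pillai integrability result \cite{HairerPillai2013}, which guarantees that the inverse $\theta$-Hölder roughness of fractional Brownian motion has moments of every order, provided $\theta < H$. For $\mathcal{N}_{\RP{w},\alpha}$, since $\alpha < H$, Fernique-type integrability for the fractional Brownian rough path (see \cite[Chapter 15]{FrizVictoir2010}) gives $\mathcal{N}_{\RP{w},\alpha} \in \bigcap_r L^r$. Neither of these steps interacts with $\epsilon$.

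For the controlled path norm, the essential point is that the scaled vector fields $V_i^\epsilon = \epsilon V_i$ and $V_0^\epsilon = \epsilon^{1/H} V_0$ have $C_b^\infty$-norms bounded by those of $V_i$ themselves for $\epsilon \in (0,1)$, so all constants in the standard a priori bounds are in fact uniform in $\epsilon$. Following the arguments in \cite[Section~5]{CassHairerLittererTindel2015} (and the Norris lemma framework for controlled paths), the controlled path norm of $(y^\epsilon, J^\epsilon, K^\epsilon)$ with respect to $\RP{w}$ can be bounded by a polynomial in $\overline{\omega_{\RP{w}}}$ and $N_\delta(\RP{w})$ (for suitable small $\delta$), with coefficients depending only on the vector field bounds. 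Then the Cass--Litterer--Lyons integrability result \cite{CassLittererLyons2013}, giving $\exp(N_\delta(\RP{w})) \in \bigcap_r L^r$, together with the Gaussian integrability of $\overline{\omega_{\RP{w}}}$, yields uniform-in-$\epsilon$ moment bounds.

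The main obstacle is verifying that the estimates in \cite{CassHairerLittererTindel2015} for the controlled-path norm of $(y, J, K)$ really do translate to $(y^\epsilon, J^\epsilon, K^\epsilon)$ with constants independent of $\epsilon$. The delicate issue is that while the vector fields shrink as $\epsilon \searrow 0$, the controlled path decomposition involves their Jacobians and higher derivatives, which must be tracked through the \emph{greedy partition} argument based on $\omega_{\RP{w}}$. Since $\|V_i^\epsilon\|_{C_b^k} \le \|V_i\|_{C_b^k}$ uniformly for $\epsilon \le 1$, one can essentially import the CHLT bound verbatim after checking that the step-count in the greedy partition depends only on the vector field $C_b^k$-norms and on $\omega_{\RP{w}}$, not on $\epsilon$. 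Once this is confirmed, the conclusion follows by Hölder's inequality and the exponential integrability of $N_\delta(\RP{w})$.
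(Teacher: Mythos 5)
Your proof is correct and follows essentially the same route as the paper: handle the four pieces separately, cite Hairer--Pillai for $L_\theta(w)^{-1}$, Fernique/Gaussian tails for $\mathcal{N}_{\RP{w},\alpha}$, and Cass--Hairer--Litterer--Tindel together with Cass--Litterer--Lyons integrability for the controlled-path norm, with the key observation that $\|V_i^\epsilon\|_{C_b^k}\le\|V_i\|_{C_b^k}$ for $\epsilon\le 1$ giving $\epsilon$-uniformity. The paper's own proof is terser (it cites \cite[Corollary~8.1]{CassHairerLittererTindel2015} together with \lref{lem_1523340339} for the controlled-path piece) but relies on exactly the same uniformity mechanism you describe.
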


\begin{proof}
	We see $\expect[L_\theta(w)^{-r}]<\infty$ for all $r>1$
	from \cite[Lemma~3]{HairerPillai2013} and \cite[Corollary~5.10]{CassHairerLittererTindel2015}.
	We obtain
	$
		\sup_{0<\epsilon<1}
			\expect[\|(y^\epsilon,J^\epsilon,K^\epsilon)\|_{Q_{\RP{w}}^\alpha}^r]
		<
			\infty
	$
	for all $r>1$
	by reading carefully \cite[Corollary~8.1]{CassHairerLittererTindel2015}
	and using \lref{lem_1523340339}.
	Finally, since $\|\RP{w}^i\|_{\Hol{i\alpha}}^{1/i}$ has a Gaussian tail,
	we see $\expect[\mathcal{N}_{\RP{w},\alpha}^r]<\infty$.
	The proof finished.
\end{proof}

Before stating next lemma, we make a remark.
\begin{remark}
	For every $v=(v_1,\dots,v_n)^\top\in\RealNum^n$, we have
	\begin{align}\label{eq_1519365059}
		\langle v, C^\epsilon v \rangle_{\RealNum^n}
		=
			\sum_{i=1}^d
			\int_0^1
				\langle
					v,K^\epsilon_s V^\epsilon_i(y^\epsilon_s)
				\rangle_{\RealNum^n}^2\,
				ds,
	\end{align}
	which follows from
	\begin{align*}
		\langle v, C^\epsilon v \rangle_{\RealNum^n}
		&=
			\int_0^1
				\langle
					v,
					K^\epsilon_s \sigma^\epsilon(y^\epsilon_s)
					\{K^\epsilon_s\sigma^\epsilon(y^\epsilon_s)\}^\top v
				\rangle_{\RealNum^n}\,
				ds
	\end{align*}
	and
	\begin{align*}
		\langle
			v,
			K^\epsilon_s \sigma^\epsilon(y^\epsilon_s)
			\{K^\epsilon_s\sigma^\epsilon(y^\epsilon_s)\}^\top v
		\rangle_{\RealNum^n}
		&=
			\langle
				\{K^\epsilon_s\sigma^\epsilon(y^\epsilon_s)\}^\top v,
				\{K^\epsilon_s\sigma^\epsilon(y^\epsilon_s)\}^\top v
			\rangle_{\RealNum^d}\\
		&=
			\sum_{i=1}^d
				\langle
					v,K^\epsilon_s V^\epsilon_i(y^\epsilon_s)
				\rangle_{\RealNum^n}^2.
	\end{align*}
\end{remark}

We denote by $\vecFields^\epsilon_m$ and $\vecFields^\epsilon$ sets of vectors
which are defined by replacing $V_i$ by $V^\epsilon_i$ in \dref{def_1541388669}.
Then, we see relationship of $\mathcal{L}_w^\epsilon(a,\theta,1)$
and $\langle v, C^\epsilon v \rangle_{\RealNum^n}$
from the following lemma.
\begin{lemma}\label{prop_1519378363}
	Let $m\in\NaturalNum$.
	For every $0<\epsilon<1$, $W\in\vecFields^\epsilon_m$,
	$v\in\RealNum^n$ with $|v|=1$ and $0\leq s\leq 1$, we have
	\begin{align*}
		|
			\langle
				v,K^\epsilon_s W(y^\epsilon_s)
			\rangle_{\RealNum^n}
		|
		\leq
			c_m
			\mathcal{L}_w^\epsilon(a,\theta,1)^{\mu(m)}
			\langle v, C^\epsilon v \rangle_{\RealNum^n}^{\pi(m)},
	\end{align*}
	where $c_m$, $\mu(m)$ and $\pi(m)$ are certain positive constants
	independent of $\epsilon$, $W$, $v$ and $s$.
\end{lemma}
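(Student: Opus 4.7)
The plan is to argue by induction on the bracket order $m$, following closely the strategy of \cite{HairerPillai2013} and \cite{CassHairerLittererTindel2015} adapted to the scaled vector fields. The two ingredients will be an interpolation between Hölder and $L^2$ norms for the base step, and a rough-path Norris-type lemma applied to a suitably chosen controlled path for the inductive step.

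For the base case $m=0$, any $W\in\vecFields^\epsilon_0$ is of the form $V^\epsilon_i$ with $1\le i\le d$, so setting $f_i(s)=\langle v,K^\epsilon_s V^\epsilon_i(y^\epsilon_s)\rangle_{\RealNum^n}$, identity \eqref{eq_1519365059} gives $\|f_i\|_{L^2([0,1])}^2\le\langle v,C^\epsilon v\rangle_{\RealNum^n}$. Since $(y^\epsilon,K^\epsilon)$ is a controlled path with respect to $\RP{w}$ with norm bounded by a polynomial in $\mathcal{L}_w^\epsilon(a,\theta,1)$, and $V^\epsilon_i$ has bounded derivatives uniformly in $\epsilon$, the function $f_i$ is $\alpha$-Hölder with norm of the same type. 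The standard interpolation
\[
 \|f_i\|_\infty\le C\,\|f_i\|_{C^\alpha}^{1/(1+2\alpha)}\,\|f_i\|_{L^2}^{2\alpha/(1+2\alpha)}
\]
then yields the desired bound with suitable $\mu(0)$ and $\pi(0)$.

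For the inductive step, let $W'=[V^\epsilon_i,W]\in\vecFields^\epsilon_{m+1}$ with $W\in\vecFields^\epsilon_m$ and $0\le i\le d$, and consider $Z_t=\langle v,K^\epsilon_t W(y^\epsilon_t)\rangle_{\RealNum^n}$. A direct computation using \eqref{eq_ScaledRDEDrivenByFBm}, \eqref{eq_1539835827} and the Leibniz rule for rough integrals — in which the $\nabla V^\epsilon_j$ contribution from $dK^\epsilon$ and the $\nabla W$ contribution from $d[W(y^\epsilon)]$ combine into the bracket $[V^\epsilon_j,W]$ — gives
\[
 dZ_t=\sum_{j=1}^d \langle v,K^\epsilon_t[V^\epsilon_j,W](y^\epsilon_t)\rangle\,dw^j_t+\langle v,K^\epsilon_t[V^\epsilon_0,W](y^\epsilon_t)\rangle\,dt.
\]
Thus $Z$ is a controlled path whose derivative coefficients are exactly the quantities we wish to bound at level $m+1$. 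The rough-path Norris lemma (see \cite[Proposition~3.8]{HairerPillai2013} or \cite[Proposition~5.9]{CassHairerLittererTindel2015}) then controls the sup norms of these coefficients by $\|Z\|_\infty$ times a polynomial in $\mathcal{L}_w^\epsilon(a,\theta,1)$. Applying the induction hypothesis to $W$ to bound $\|Z\|_\infty$ and composing the exponents finishes the step.

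The main obstacle will be checking that every constant produced by the controlled-path and Norris-type estimates can be chosen uniformly in $\epsilon\in(0,1)$. The coefficients $V^\epsilon_i=\epsilon V_i$ and $V^\epsilon_0=\epsilon^{1/\Hurst}V_0$ depend on $\epsilon$ but have $C_b^\infty$-norms bounded uniformly on $(0,1]$, and the relevant norms of $(y^\epsilon,J^\epsilon,K^\epsilon)$ in the controlled-path space $Q^\alpha_{\RP{w}}$ have already been packaged into $\mathcal{L}_w^\epsilon(a,\theta,1)$ via \lref{prop_1529556727}; so no $\epsilon$-blow-up occurs. The only remaining care is to apply the rough-path Norris lemma in a form whose constants depend only on $\|\RP{w}\|$, $L_\theta(w)^{-1}$ and the $C_b^\infty$-norms of the coefficients — precisely the quantities entering $\mathcal{L}_w^\epsilon$.
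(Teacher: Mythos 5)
Your proposal is correct and follows essentially the same route as the paper's proof: the base case is the interpolation bound $\|f\|_\infty \le 2\|f\|_{\Hol{\alpha}}^{1/(2\alpha+1)}\|f\|_{L^2}^{2\alpha/(2\alpha+1)}$ together with \eqref{eq_1519365059} and the controlled-path estimate $\|f\|_{\Hol{\alpha}}\le c\,\mathcal{L}_w^\epsilon(a,\theta,1)^2$, and the inductive step is the same computation of $d\langle v,K^\epsilon_t U(y^\epsilon_t)\rangle$ into bracket terms followed by the rough-path Norris lemma of \cite{CassHairerLittererTindel2015,HairerPillai2013}. One small imprecision: the Norris lemma yields a bound of the form $M\,\mathcal{L}_w^\epsilon(a,\theta,1)^{Q}\|Z-Z_0\|_\infty^{R}$ with $R$ possibly less than $1$ (not $\|Z\|_\infty$ times a polynomial in $\mathcal{L}$), which is what makes $\pi(m)=\pi(0)R^{m}$ decay with $m$; your phrase ``composing the exponents'' shows you intended this, but the sentence should be worded accordingly.
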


\begin{proof}
	The proof is done by induction on $m$.
	Let $m=0$. Then, $W=V^\epsilon_i$ for some $1\leq i\leq d$.
	Since
	$
		f^\epsilon_i
		=
			\langle
				v,K^\epsilon V^\epsilon_i(y^\epsilon)
			\rangle_{\RealNum^n}
	$
	is $\alpha$-H{\"o}lder continuous,
	we can use \cite[Lemma~A.3]{HairerPillai2011} to obtain
	\begin{align*}
		\|f^\epsilon_i\|_{\infty}
		\leq
			2
			\|f^\epsilon_i\|_{\Hol{\alpha}}^{1/(2\alpha+1)}
			\|f^\epsilon_i\|_{L^2([0,1];\RealNum)}^{2\alpha/(2\alpha+1)}.
	\end{align*}
	Since
	\begin{align*}
		\|f^\epsilon_i\|_{\Hol{\alpha}}
		\leq
			c
			\{1+\|K^\epsilon\|_{\Hol{\alpha}}\}
			\{|a|+\|y^\epsilon\|_{\Hol{\alpha}}\}
		\leq
			c\mathcal{L}^\epsilon_w(a,\theta,1)^2
	\end{align*}
	holds
	and
	$
		\|f^\epsilon_i\|_{L^2([0,1];\RealNum)}
		\leq
			\langle v, C^\epsilon v \rangle_{\RealNum^n}^{1/2}
	$
	follows from \eqref{eq_1519365059},
	we have
	\begin{align*}
		\|f^\epsilon_i\|_{\infty}
		\leq
			2c^{1/(2\alpha+1)}
			\mathcal{L}^\epsilon_w(a,\theta,1)^{2/(2\alpha+1)}
			\langle v, C^\epsilon v \rangle_{\RealNum^n}^{\alpha/(2\alpha+1)}.
	\end{align*}
	This is the conclusion for $m=0$.

	Assuming the conclusion to hold for $m-1$, we will prove it for $m$.
	Note that for every $W\in\vecFields^\epsilon_m$,
	there exists $U\in \vecFields^\epsilon_{m-1}$ and $0\leq i\leq d$ such that
	$W=[V^\epsilon_i,U]$.
	We have
	\begin{multline*}
		\langle v,K^\epsilon_t U(y^\epsilon_t)\rangle_{\RealNum^n}
		-
		\langle v,U(a)\rangle_{\RealNum^n}\\
		=
			\sum_{i=1}^d
				\int_0^t
					\langle v,K^\epsilon_s[V^\epsilon_i,U](y^\epsilon_s)\rangle_{\RealNum^n}\,
					dw^i_s
			+
			\int_0^t
				\langle v,K^\epsilon_s[V_0,U](y^\epsilon_s)\rangle_{\RealNum^n}\,
				ds.
	\end{multline*}
	From a Norris-type lemma (\cite[Theorem~5.6]{CassHairerLittererTindel2015}, \cite[Theorem~3.1]{HairerPillai2013}),
	there exist positive constant $Q$ and $R$ such that
	\begin{align*}
		\|\langle v,K^\epsilon_\bullet W(y^\epsilon_\bullet)\rangle_{\RealNum^n}\|_{\infty}
		&\leq
			M
			\mathcal{L}_w^\epsilon(a,\theta,1)^Q
			\|
				\langle v,K^\epsilon_\bullet U(y^\epsilon_\bullet)\rangle_{\RealNum^n}
				-
				\langle v,U(a)\rangle_{\RealNum^n}
			\|_{\infty}^R\\
		&\leq
			M
			\mathcal{L}_w^\epsilon(a,\theta,1)^Q
			(
				2
				c_{m-1}
				\mathcal{L}_w^\epsilon(a,\theta,1)^{\mu(m-1)}
				\langle v, C^\epsilon v \rangle_{\RealNum^n}^{\pi(m-1)}
			)^R\\
		&=
			M
			(2c_{m-1})^R
			\mathcal{L}_w^\epsilon(a,\theta,1)^{Q+\mu(m-1)R}
			\langle v, C^\epsilon v \rangle_{\RealNum^n}^{\pi(m-1)R}
	\end{align*}
	for some $M$ depending only on $d$ and $n$.
	This is the conclusion for $m$.
	The proof finished.
\end{proof}

We are now in a position to show \lref{lem_1523247628}.
\begin{proof}[Proof of \lref{lem_1523247628}]
	Let $\nu$ be a positive constant specified later.
	We will show that, for every $1<r<\infty$, there exist positive constants $c_{r,1}$ and $c_{r,2}$ such that
	\begin{gather}
		\label{eq_1528102062}
		\prob(|C^\epsilon|>1/\xi)
		\leq
			c_{r,1}
			\xi^r,\\
		\label{eq_1528102018}
		\sup_{|v|=1}
			\prob
				(
					\langle v,C^\epsilon v\rangle_{\RealNum^n}
					<
						\xi
				)
		\leq
			c_{r,2}
			(\epsilon^{-\nu})^r
			\xi^r
	\end{gather}
	for any $0<\epsilon<1$ and $0<\xi<1$.
	Due to \lref{lem_1528115503},
		 these two estimates are sufficient for \lref{lem_1523247628}.
	Indeed the assertion holds with $\mu=\nu+1$.

	Because \eqref{eq_1528102062} follows from \lref{lem_1523340339},
	we show \eqref{eq_1528102018} in the rest of proof.
	Since the vector fields $V_0,V_1,\dots,V_d$ satisfy the H{\"o}rmander condition at $a$,
	we can choose $W_1,\dots,W_n\in\vecFields$ so that $W_1(a),\dots,W_n(a)$ linearly spans $\RealNum^n$.
	Then, for every $1\leq k\leq n$,
	there exists a non-negative integer $i_k\in\NaturalNum$ such that $W_k\in\vecFields_{i_k}$.
	From definition of $\vecFields^\epsilon_{i_k}$, we can choose a positive constant $\rho_k$
	such that $W^\epsilon_k=\epsilon^{\rho_k}W_k\in\vecFields^\epsilon_{i_k}$.
	Set
	$\rho
		=
			\max\{\rho_1,\dots,\rho_n\}
	$.
	We define
	\begin{align*}
		\phi(u)
		&=
			\max_{1\leq k\leq n}
				|\langle u,W_k(a)\rangle_{\RealNum^n}|,
		&
		\phi^\epsilon(u)
		=
			\max_{1\leq k\leq n}
				|\langle u,W^\epsilon_k(a)\rangle_{\RealNum^n}|
	\end{align*}
	for every $u\in\RealNum^n$ with $|u|=1$.
	Then,
	$
		\phi^\epsilon(u)
		\geq
			\epsilon^\rho
			\phi(u)
	$
	for any $|u|=1$.
	Since $W^\epsilon_1(a),\dots,W^\epsilon_n(a)$ spans linearly $\RealNum^n$
	and $\phi^\epsilon$ is continuous,
	$\phi^\epsilon$ attains the positive minimum.
	Let $c_m$, $\mu(m)$ and $\pi(m)$ be the same symbols as \lref{prop_1519378363}
	and set
	$c=\max\{c_{i_1},\dots,c_{i_n}\}$,
	$\mu=\max\{\mu_{i_1},\dots,\mu_{i_n}\}$,
	and $\pi=\min\{\pi_{i_1},\dots,\pi_{i_n}\}$.
	Set $\nu=\rho/\pi$.

	We choose $v\in\RealNum^n$ with $|v|=1$ and $\epsilon>0$ arbitrarily.
	For $v$ and $\epsilon$, there exists $1\leq k_0\equiv k_0(v,\epsilon)\leq n$ such that
	$\phi^\epsilon(v)=|\langle v,W^\epsilon_{k_0}(a)\rangle_{\RealNum^n}|$.
	From \lref{prop_1519378363} and the above, we have
	\begin{align*}
		\prob
			(
				\langle v,C^\epsilon v\rangle_{\RealNum^n}
				<
					\xi
			)
		\leq
			\prob
				(
					|\langle v,W^\epsilon_{k_0}(a)\rangle_{\RealNum^n}|
					<
						c_{i_{k_0}}
						\mathcal{L}_w^\epsilon(a,\theta,1)^{\mu(i_{k_0})}
						\xi^{\pi(i_{k_0})}
				)
	\end{align*}
	and
	\begin{align*}
		\{
			|\langle v,W^\epsilon_{k_0}(a)\rangle_{\RealNum^n}|
			<
				c_{i_{k_0}}
				\mathcal{L}_w^\epsilon(a,\theta,1)^{\mu(i_{k_0})}
				\xi^{\pi(i_{k_0})}
		\}
		&\subset
			\{
				\epsilon^\rho
				\phi(v)
				<
					c
					\mathcal{L}_w^\epsilon(a,\theta,1)^\mu
					\xi^\pi
			\}\\
		&=
			\{
				\phi(v)
				<
					c
					\mathcal{L}_w^\epsilon(a,\theta,1)^\mu
					\epsilon^{-\rho}
					\xi^\pi
			\}\\
		&\subset
			\{
				\min_{|v|=1}\phi(v)
				<
					c
					\mathcal{L}_w^\epsilon(a,\theta,1)^\mu
					\epsilon^{-\rho}
					\xi^\pi
			\}.
	\end{align*}
	From the Markov inequality, we see
	\begin{align*}
		\prob
			(
				\langle v,C^\epsilon v\rangle_{\RealNum^n}
				<
					\xi
			)
		\leq
			\frac{1}{\{\min_{|v|=1}\phi(v)\}^{r/\pi}}
			c^{p/\pi}
			\expect[\mathcal{L}_w^\epsilon(a,\theta,1)^{\mu r/\pi}]
			(\epsilon^{-\nu})^r
			\xi^r.
	\end{align*}
	Noting $\expect[\mathcal{L}_w^\epsilon(a,\theta,1)^{\mu r/\pi}]$
	are bounded from above in $\epsilon$, we obtain \eqref{eq_1528102018}.
\end{proof}

\subsection{Covariance matrix of solution of scaled-shifted RDE driven by fBm}\label{sec_1542002443}
In this subsection, we will show \pref{prop_1526956280}.
Let $1/\Hurst<p<\intPart{1/\Hurst}+1$ and $(\Hurst+1/2)^{-1}<q<2$ satisfy $1/p+1/q>1$.
We start our discussion with making a remark on continuity of $Q$
on $\geoRPs{p}(\RealNum^d)\times\RealNum\langle\lambda\rangle$,
where $\lambda$ is a one-dimensional path defined by $\lambda_t=t$.
Recall that the solution maps $(\RP{x},c\lambda)\mapsto y, J, K$ are continuous on $\geoRPs{p}(\RealNum^d)\times\RealNum\langle\lambda\rangle$,
where $y$, $J$ and $K$ are solutions to \eqref{eq_1526972098}, \eqref{eq_1539835811} and \eqref{eq_1539835827} driven by $(x,c\lambda)$, respectively.
Furthermore, $Dy_1$ is continuous in $y$, $J$ and $K$
since the right-hand side of \eqref{eq_1526971767} is Young integration
and Young integration is continuous in both integrands and integrators.
Note that the embedding $\CM\subset C_0^{\var{q}}([0,1];\RealNum^d)$ with $1/p+1/q>1$ holds.
Hence, we see that $Q$ is continuous on $\geoRPs{p}(\RealNum^d)\times\RealNum\langle\lambda\rangle$.
We are now in a position to prove \pref{prop_1526956280}.
\begin{proof}[Proof of \pref{prop_1526956280}]
	From the continuity of $Q$ at $(\RP{\gamma},0)\in\geoRPs{p}(\RealNum^d)\times\RealNum\langle\lambda\rangle$
	and the assumption on $\gamma$,
	there exists an open set $O\subset \geoRPs{p}(\RealNum^d)\times\RealNum\langle\lambda\rangle$ such that
	\begin{align}\label{eq_1527063264}
		Q(\tau_\gamma(\RP{x}),\RP{k})\geq c_1 \idMat
	\end{align}
	for any $(\RP{x},k)\in O$.
	Note that $O$ contains $(\RP{0}, 0)$.
	(Here, $(\tau_\gamma(\RP{x}),\RP{k})$ is the Young pairing
	and $(\RP{x},k)$ is a pair.
	In this proof, both of them will appear.)

	We decompose $\expect[|\det \tilde{Q}^\epsilon|^{-r}]$ into expectations on
	$U_\epsilon=\{w\in\probSp\mid(\epsilon\RP{w},\epsilon^{1/\Hurst}\lambda)\in O\}$
	and $U_\epsilon^\complement$.
	First, we study the expectation on $U_\epsilon$.
	Since
	$
		\tilde{Q}^\epsilon
		=
			\epsilon^2 Q(\tau_\gamma(\epsilon\RP{w}),\epsilon^{1/\Hurst}\RP{\lambda})
	$
	and \eqref{eq_1527063264},
	we see
	\begin{align*}
		\det \tilde{Q}^\epsilon
		=
			\epsilon^{2n}
			\det Q(\tau_\gamma(\epsilon\RP{w}),\epsilon^{1/\Hurst}\RP{\lambda})
		\geq
			\epsilon^{2n}
			c_1^n,
	\end{align*}
	which implies
	\begin{align}\label{eq_1541494253}
		\expect[|\det \tilde{Q}^\epsilon|^{-r};U_\epsilon]
		\leq
			(c_1\epsilon^2)^{-nr}.
	\end{align}

	Next we consider the expectation on $U_\epsilon^\complement$.
	From the H\"{o}lder inequality, we have
	\begin{align*}
		\expect[|\det \tilde{Q}^\epsilon|^{-r};U_\epsilon^\complement]
		\leq
			\expect[|\det \tilde{Q}^\epsilon|^{-2r}]^{\frac{1}{2}}
			\prob(U_\epsilon^\complement)^{\frac{1}{2}}.
	\end{align*}
	Since the rate function of
	the Schilder-type large deviation principle is good,
	we see that there exists a positive constant $c_2$ such that
	\begin{align*}
		\prob(U_\epsilon^\complement)
		=
			\prob((\epsilon\RP{w},\epsilon^{1/\Hurst}\lambda)\in O^\complement)
		=
			\hat\nu_{\epsilon}(O^\complement)
		\leq
			\exp
				\left(
					-
					\frac{c_2}{2\epsilon^2}
				\right)
	\end{align*}
	for small $\epsilon>0$.
	Here, we choose $1<q<\infty$ so that
	$
		(q-1)\|\gamma\|_\CM^2
		<
			c_2
	$
	and set $1<q'<\infty$ as the H\"{o}lder conjugate of $q$.
	The Girsanov theorem and \pref{prop_1526540975} imply
	\begin{align*}
		\expect[|\det \tilde{Q}^\epsilon|^{-2r}]^{\frac{1}{2}}
		&=
			\expect
				\left[
					|\det Q^\epsilon|^{-2r}
					\exp
						\left(
							\left\langle w,\frac{\gamma}{\epsilon} \right\rangle
							-
							\frac{\|\gamma\|_\CM^2}{2\epsilon^2}
						\right)
				\right]^{\frac{1}{2}}\\
		&\leq
			\expect[|\det Q^\epsilon|^{-2rq'}]^{\frac{1}{2q'}}
			\expect
				\left[
					\exp
						\left(
							q
							\left\{
								\left\langle w,\frac{\gamma}{\epsilon} \right\rangle
								-
								\frac{\|\gamma\|_\CM^2}{2\epsilon^2}
							\right\}
						\right)
				\right]^{\frac{1}{2q}}\\
		&\leq
			(c_3\epsilon^{-\mu})^r
			\exp
				\left(
					\frac{(q-1)\|\gamma\|_\CM^2}{4\epsilon^2}
				\right).
	\end{align*}
	From the above, we see
	\begin{align}\label{eq_1541494293}
		\expect[|\det \tilde{Q}^\epsilon|^{-r};U_\epsilon^\complement]
		&\leq
			c_3^r
			\epsilon^{-r\mu}
			\exp
				\left(
					\frac{(q-1)\|\gamma\|_\CM^2}{4\epsilon^2}
				\right)
			\exp
				\left(
					-
					\frac{c_2}{4\epsilon^2}
				\right).
	\end{align}

	The estimates \eqref{eq_1541494253} and \eqref{eq_1541494293} imply
	\begin{align*}
		\expect[|\det \epsilon^{-2} \tilde{Q}^\epsilon|^{-r}]
		&=
			\epsilon^{2nr}
			\expect[|\det \tilde{Q}^\epsilon|^{-r}]\\
		&\leq
			\epsilon^{2nr}
			\left\{
				(c_1\epsilon^2)^{-nr}
				+
				c_3^r
				\epsilon^{-r\mu}
				\exp
					\left(
						-
						\frac{c_2-(q-1)\|\gamma\|_\CM^2}{4\epsilon^2}
					\right)
			\right\}\\
		&=
			c_1^{-nr}
			+
			c_3^r
			\epsilon^{(2n-\mu)r}
			\exp
				\left(
					-
					\frac{c_2-(q-1)\|\gamma\|_\CM^2}{4\epsilon^2}
				\right).
	\end{align*}
	The right-hand side is bounded as $\epsilon\searrow 0$.
	The proof has finished.
\end{proof}


\section{Off-diagonal short time asymptotics}
\label{sec.pf}

In this section, following Watanabe \cite{Watanabe1987},
we prove the short time asymptotics of kernel function
$p_t(a,a')$ when $a \neq a'$ and $1/4<\Hurst\leq 1/2$.
Unlike in \cite{Watanabe1987},
we can localize around the energy minimizing path
in the geometric rough path space in this paper,
since Lyons-It\^o map is continuous in this setting.
(The case $\Hurst>1/2$ was done in \cite{Inahama2016b}
and the case $1/3<\Hurst\leq 1/2$ was done in \cite{Inahama2016c}.)

Hereafter in this section, we fix $1/4<\Hurst\leq 1/2$.
Let $(1+\intPart{1/\Hurst})^{-1}<\alpha<\Hurst$
and choose $m\in\NaturalNum_+$ such that $\Hurst-\alpha>2/m$.
Set $p=1/\alpha$ and $q=(\Hurst+1/2-1/m)^{-1}$.

\subsection{Localization around energy minimizing path}
\label{subsec.ldp}
Let us introduce a cut-off function for the localization.
Let $\bar{\gamma}\in\CM$ be as in \aref{ass_minimizer} and $\epsilon>0$.
Since $\|\RP{w}^i\|_{\Bes{(i\alpha, 12m/i)}}^{12m/i}$ is an element of
an inhomogeneous Wiener chaos of order $12m$,
so is its Cameron-Martin shift
$\|\tau_{-\bar{\gamma}} (\epsilon \RP{w} )^i \|_{\Bes{(i\alpha, 12m/i)}}^{12m/i}$.
Here, $\tau_{-\bar{\gamma}}$ is the Young translation by $-\bar{\gamma}$.
It is a continuous map from $\geoRPsBes{\alpha,12m}(\RealNum^d)$ to itself.
So, this Wiener functional is defined for almost all $w \in \probSp$.
For any $r \in (1, \infty)$, $L^r$-norm of
this Wiener functional is bounded in $\epsilon$.
Hence, so is its $\SobSp{r}{k}$-norm for any $r, k$.
Due to this fact, the localization is allowed even in the framework of Watanabe distribution theory.
This is the reason why we use this Besov-type norm on the geometric rough path space.
Let $\psi:\RealNum \to [0,1]$ be a smooth function such that
$\psi(u)=1$ if $|u|\leq 1/2$ and $\psi(u)=0$ if $|u|\geq 1$.
For each $\eta >0$ and $\epsilon >0$, we set
\begin{align*}
	\chi_\eta (\epsilon, w)
	=
		\prod_{i=1}^{\intPart{1/\Hurst}}
			\psi
				\left(
					\frac{\|\tau_{-\bar{\gamma}}(\epsilon \RP{w})^i \|_{\Bes{(i\alpha, 12m/i)}}^{12m/i}}{\eta^{12m}}
				\right).
\end{align*}

%
%
The following lemma states that only rough paths sufficiently close to
the lift of the minimizer $\bar{\gamma}$ contribute to the asymptotics.
The keys of the proof are
\textrm{(i)}
the Schilder type large deviations for
fractional Brownian rough path and
\textrm{(ii)}
the Kusuoka-Stroock type estimate of the Malliavin covariance
of $y^{\epsilon}_1$
(\pref{prop_1526540975}).
\begin{lemma}\label{lm.ldpcut}
	Suppose that \aref{ass_HorCon} and \aref{ass_minimizer} hold.
	Then, for any $\eta >0$, there exists $c=c_\eta>0$ such that
	\begin{align*}
		0
		\leq
			\expect
				[
					(1-\chi_\eta(\epsilon,w))
					\cdot
					\delta_{a'}(y^{\epsilon}_1)
				]
		=
			O
				\left(
					\exp
						\left\{
							-\frac{\|\bar{\gamma}\|_\CM^2+c}{2\epsilon^2}
						\right\}
				\right)
			\qquad
			\text{as $\epsilon \searrow 0$.}
	\end{align*}
\end{lemma}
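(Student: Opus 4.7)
The plan is to combine a Watanabe pairing estimate with the joint Schilder-type LDP for $(\epsilon\RP{w},\epsilon^{1/\Hurst}\RP{\lambda})$ and the uniqueness of the energy minimizer. From \pref{prop_1526540975} (Kusuoka--Stroock estimate) combined with Meyer's inequality and the composition rule for Watanabe distributions, $\delta_{a'}(y^\epsilon_1)\in\tilSobSp{-\infty}{}(\RealNum)$ with
\begin{align*}
\|\delta_{a'}(y^\epsilon_1)\|_{\SobSp{r}{-s}}\leq C_{r,s}\epsilon^{-\mu}
\end{align*}
for every $r\in(1,\infty)$, sufficiently large $s\in\NaturalNum$, and some $\mu=\mu(r,s)>0$. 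Nonnegativity of the left-hand side in the statement follows from $1-\chi_\eta\geq 0$ together with positivity of $\delta_{a'}(y^\epsilon_1)$ as a Watanabe distribution (since $y^\epsilon_1$ has a smooth density).

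Next I would localize spatially: fix a cutoff $\theta\in C_c^\infty(\RealNum^n)$ with $\theta\equiv 1$ on $B(a',\delta)$ so that $\delta_{a'}(y^\epsilon_1)=\theta(y^\epsilon_1)\delta_{a'}(y^\epsilon_1)$, and apply duality to obtain
\begin{align*}
|\expect[(1-\chi_\eta)\delta_{a'}(y^\epsilon_1)]|
\leq
\|(1-\chi_\eta)\theta(y^\epsilon_1)\|_{\SobSp{r'}{s}}\cdot
\|\delta_{a'}(y^\epsilon_1)\|_{\SobSp{r}{-s}}.
\end{align*}
For the first factor, every Malliavin derivative $D^j[(1-\chi_\eta)\theta(y^\epsilon_1)]$ expands by Leibniz/chain rule into products of bounded, compactly supported derivatives of $\psi$ and $\theta$, Malliavin derivatives of $y^\epsilon_1$ (polynomial in $\epsilon$ by \pref{prop_1529919750}), and Malliavin derivatives of $\|\tau_{-\bar{\gamma}}(\epsilon\RP{w})^i\|_{\Bes{(i\alpha,12m/i)}}^{12m/i}$ (which lie in a fixed inhomogeneous Wiener chaos of order $12m$ and hence have $L^p$ moments of polynomial order in $\epsilon$). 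All such terms are supported in $A_\epsilon:=\{\chi_\eta<1\}\cap\{\theta(y^\epsilon_1)>0\}$, so a single H\"older estimate with one exponent arbitrarily close to $1$ gives
\begin{align*}
\|(1-\chi_\eta)\theta(y^\epsilon_1)\|_{\SobSp{r'}{s}}
\leq
C\epsilon^{-\nu}\,\prob(A_\epsilon)^{1/R}
\end{align*}
for some $\nu>0$ and some $R>1$ that may be taken arbitrarily close to $1$.

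The final step is the LDP bound on $\prob(A_\epsilon)$. The Schilder-type LDP on $\geoRPsBes{\alpha,12m}(\RealNum^d)\times\RealNum\langle\lambda\rangle$, combined with the continuity of the Lyons-It\^o map, yields by the contraction principle a joint LDP for $(\epsilon\RP{w},y^\epsilon_1)$ with rate $\tfrac12\|\gamma\|_\CM^2$ on the graph $y=\phi^0_1(\gamma)$ and $\infty$ elsewhere. Since $A_\epsilon\subset\{\epsilon\RP{w}\in\bar B_\eta\}\cap\{y^\epsilon_1\in\mathrm{supp}(\theta)\}$, where $\bar B_\eta$ is the closed set $\{\RP{x}:\|\tau_{-\bar{\gamma}}(\RP{x})^i\|_{\Bes{(i\alpha,12m/i)}}\geq\eta\cdot 2^{-i/12m}\text{ for some }i\}$,
\begin{align*}
\limsup_{\epsilon\searrow 0}\epsilon^2\log\prob(A_\epsilon)\leq -\kappa(\delta),
\qquad
\kappa(\delta):=\inf\Bigl\{\tfrac12\|\gamma\|_\CM^2:\RP{\gamma}\in\bar B_\eta,\ \phi^0_1(\gamma)\in\mathrm{supp}(\theta)\Bigr\}.
\end{align*}
Goodness of the rate function gives attainment, and a standard compactness argument shows $\kappa(\delta)\uparrow\kappa_0:=\inf\{\tfrac12\|\gamma\|_\CM^2:\gamma\in K_a^{a'},\ \RP{\gamma}\in\bar B_\eta\}$ as $\delta\searrow 0$. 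If $\kappa_0=\tfrac12\|\bar{\gamma}\|_\CM^2$, a minimizer would have to equal $\bar{\gamma}$ by \aref{ass_minimizer}; but $\RP{\bar{\gamma}}\notin\bar B_\eta$ because $\tau_{-\bar{\gamma}}(\RP{\bar{\gamma}})=\RP{0}$, contradicting closedness of $\bar B_\eta$. Hence $2\kappa_0>\|\bar{\gamma}\|_\CM^2$, and for $\delta$ small enough $2\kappa(\delta)\geq\|\bar{\gamma}\|_\CM^2+2c_0$ for some $c_0>0$. Taking $R>1$ close enough to $1$ absorbs the polynomial prefactor $\epsilon^{-\mu-\nu}$ into the exponential, giving the claim with any $c\in(0,c_0)$.

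The main obstacle is the compactness-uniqueness step: to rule out $\kappa_0=\tfrac12\|\bar{\gamma}\|_\CM^2$, one must lift weak convergence in $\CM$ of a minimizing sequence to convergence in a rough-path topology strong enough to preserve both the Besov condition defining $\bar B_\eta$ and the terminal constraint $\phi^0_1(\gamma)=a'$ in the limit. This rests on continuity of the lift map from bounded sets of $\CM$ into $\geoRPsBes{\alpha,12m}(\RealNum^d)$, together with the compact embedding of $\CM$ into an appropriate path space; carefully tracking the two nested limits $\delta\searrow 0$ and $\epsilon\searrow 0$ is the other delicate point.
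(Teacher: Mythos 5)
Your proposal is correct and follows essentially the same route as the paper's own proof: (i) use the Kusuoka--Stroock bound of \pref{prop_1526540975} to control the negative-Sobolev norm of $\delta_{a'}(y^\epsilon_1)$ polynomially in $\epsilon^{-1}$, (ii) exploit the support of the cut-off to turn the test-function factor into a probability raised to a power $1/R$ arbitrarily close to $1$, (iii) bound that probability via the joint Schilder-type LDP for $(\epsilon\RP{w},\epsilon^{1/\Hurst}\lambda)$, and (iv) obtain a strictly positive energy gap by a minimizing-sequence/compactness argument using goodness of the rate function, closedness of the Besov-complement set, continuity of the Lyons--It\^o map, and uniqueness of the minimizer in \aref{ass_minimizer}. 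The only cosmetic difference is that you invoke the abstract $\SobSp{r}{-s}$--$\SobSp{r'}{s}$ duality directly, whereas the paper unpacks the same estimate by an explicit integration-by-parts with an auxiliary function $C(u)=\prod g(u_i-a_i')$, $g(u)=u\vee 0$, and a corresponding sum $\sum_{j,k}\expect[F_{j,k}\cdots]$; the underlying mechanism (Meyer's inequality, H\"older, and the uniform-in-$r$ exponent $\mu$ in the Kusuoka--Stroock estimate so that $r'\searrow 1$ can be taken) is identical.
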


\begin{proof}
	We show the assertion for $1/4<\Hurst\leq 1/3$.
	We can show it for $1/3<\Hurst\leq 1/2$ more easily.

	Set
	$
		p_\epsilon
		=
			\expect
				[
					(1-\chi_\eta(\epsilon,w))
					\delta_{a'}(y^{\epsilon}_1)
				]
	$.
	We take $\eta' >0$ arbitrarily and fix it for a while.
	It is obvious that
	\begin{align*}
		0
		\leq
			p_\epsilon
		=
			\expect
				\left[
					\{1-\chi_\eta(\epsilon,w)\}
					\psi
						\left(
							\frac{|y^{\epsilon}_1-a'|^2}{\eta'^2}
						\right)
			 		\delta_{a'}(y^{\epsilon}_1)
				\right].
	\end{align*}
	Set $A(\xi_1, \xi_2, \xi_3) = 1 - \psi(\xi_1)\psi(\xi_2)\psi(\xi_3)$
	for $\xi = (\xi_1,\xi_2,\xi_3)\in\RealNum^3$
	and write $(\xi_i;i=1,2,3)=(\xi_1,\xi_2,\xi_3)$.
	Set $g(u) = u \vee 0$ for $u \in \RealNum$.
	Then, in the sense of distributional derivative, $g^{\prime \prime} =\delta_0$.
	Take a bounded continuous function $C: \RealNum^n \to \RealNum$ such that
	$C(u_1, \ldots, u_n) = g(u_1- a'_1) g(u_2- a'_2)\cdots g(u_n- a'_n)$
	if $|u -a'| \leq 2\eta'$.
	Then,
	\begin{align*}
		p_\epsilon
		=
			\expect
				\left[
					A
						\left(
							\frac{\|\tau_{-\bar{\gamma}}(\epsilon\RP{w})^i\|_{\Bes{(i\alpha,12m/i)}}^{12m}}{\eta^{12m}};i=1,2,3
						\right)
					\psi
						\left(
							\frac{|y^{\epsilon}_1-a'|^2}{\eta'^2}
						\right)
					(\partial_1^2\cdots\partial_n^2 C)(y^{\epsilon}_1)
				\right].
	\end{align*}

	Now, we use integration by parts formula for generalized expectations
	as in \cite{Watanabe1987, IkedaWatanabe1989} to see that
	$p_\epsilon$ is equal to a finite sum of the following form;
	\begin{multline*}
		p_\epsilon
		=
			\sum_{j,k}
				\expect
					\left[
						F_{j,k}(\epsilon,w)
						\nabla^j A
							\left(
								\frac{\|\tau_{-\bar{\gamma}}(\epsilon\RP{w})^i \|_{\Bes{(i\alpha,12m/i)}}^{12m}}{\eta^{12m}};i=1,2,3
							\right)
					\right.\\
					\left.
						\vphantom{\left(
							\frac{\|\tau_{-\bar{\gamma}}(\epsilon\RP{w})^i \|_{\Bes{(i\alpha,12m/i)}}^{12m}}{\eta^{12m}};i=1,2,3
						\right)}
						\times
						\psi^{(k)}
							\left(
								\frac{|y^{\epsilon}_1-a'|^2}{\eta'^2}
							\right)
						C(y^{\epsilon}_1 )
					\right].
	\end{multline*}
	Here $j=(j_1,j_2,j_3)$ and $k$ run over finite subsets of $\NaturalNum^3$ and $\NaturalNum$, respectively,
	$\nabla^j A=\partial_1^{j_1}\partial_2^{j_2}\partial_3^{j_3}A$
	and $F_{j,k} (\epsilon, w)$ is a polynomial in components of
	the following (i)--(iv):
	(i) $y^{\epsilon}_1$ and its derivatives,
	(ii) $\|\tau_{-\bar{\gamma}}(\epsilon\RP{w})^i\|_{\Bes{(i\alpha, 12m/i)}}^{12m/i}$ and its derivatives,
	(iii) $Q^\epsilon$, which is Malliavin covariance matrix of $y^{\epsilon}_1$ and its derivatives,
	and (iv) $(Q^\epsilon)^{-1}$.
	Note that the derivatives of $(Q^\epsilon)^{-1}$ do not appear.

	From \pref[prop_1529919750]{prop_1526540975}, there exists $\rho>0$ such that
	$|(Q^\epsilon)^{-1}| = O(\epsilon^{-\rho})$ in $L^r$
	as $\epsilon \searrow 0$ for all $1<r<\infty$.
	(Recall a well-known formula to obtain the inverse matrix $A^{-1}$ with the adjugate matrix of $A$
	divided by $\det A$.)
	Therefore, there exists $\rho >0$ such that
	$|F_{j,k} (\epsilon)| = O(\epsilon^{-\rho})$ in any $L^r$-norm.
	($\rho = \rho(r)>0$ may change from line to line.)
	By H\"older's inequality, we have
	\begin{align}
		\label{eq_ldp_pf3}
		p_\epsilon
		&\leq
			\frac{c}{\epsilon^{\rho}}
			\sum_{j,k}
				\expect
					\Bigg[
						\left|
							\nabla^j A
								\left(
									\frac{\|\tau_{-\bar{\gamma}}(\epsilon\RP{w})^i \|_{\Bes{(i\alpha,12m/i)}}^{12m}}{\eta^{12m}};i=1,2,3
								\right)
						\right|^{r'}\\\nonumber
		&\phantom{\leq}\qquad\qquad\qquad\qquad\qquad\qquad\qquad\qquad\qquad
						\times
						\left|
							\psi^{(k)}
								\left(
									\frac{|y^{\epsilon}_1-a'|^2 }{\eta'^2}
								\right)
						\right|^{r'}
					\Bigg]^{1/r'}\\ \nonumber
		&\leq
			\frac{c}{\epsilon^{\rho}}
			\prob
				\left[
					\bigcup_{i=1}^3
						\left\{
			 				\|\tau_{-\bar{\gamma}}(\epsilon \RP{w})^i \|_{\Bes{(i\alpha,12m/i)}}^{1/i}
							\geq
								\frac{\eta}{2^{1/(12m)}}
			 			\right\}
						\cap
						\{|y^{\epsilon}_1-a'|\leq\eta'\}
				\right]^{1/r'}.
	\end{align}
	Here, $1/r +1/r' =1$ and $c=c(r,r', \eta, \eta')$ is a positive constant,
	which may change from line to line.
	Set
	$
		U_{\eta''}
		=
			\bigcap_{i=1}^3
				\{
					\RP{x}\in\geoRPsBes{\alpha,12m}(\RealNum^d)
					\mid
					\|\RP{x}^i \|_{\Bes{(i\alpha, 12m/i)}}^{1/i}<\eta''
				\}
	$
	for $\eta'' >0$.
	Then this forms a fundamental system of open neighborhoods around $(\RP{x}^1,\RP{x}^2,\RP{x}^3) \equiv (0,0,0)$
	with respect to $(\alpha, 12m)$-Besov topology.
	By \pref{prop_1530162489},
	$
		\tau_{\bar{\gamma}}^{-1}(U_{\eta''})
		=
			\{
				\RP{x}\in\geoRPsBes{\alpha,12m}(\RealNum^d)
				\mid
				\tau_{\bar{\gamma}}(\RP{x})\in U_{\eta''}
			\}
	$
	is an open neighborhood of $\bar{\RP{\gamma}}$
	in $(\alpha,12m)$-geometric rough path space.
	The first set on the most right-hand side of \eqref{eq_ldp_pf3} can be written as
	$\{\epsilon\RP{w}\notin\tau_{\bar{\gamma}}^{-1}(U_{ 2^{-1/(12m)}\eta})\}$.

	First taking $\limsup_{\epsilon \searrow 0}\epsilon^2\log$ and then letting $r'\searrow 1$, we obtain
	\begin{multline}
		\label{eq_1531297248}
		\limsup_{\epsilon \searrow 0}
			\epsilon^2
			\log p_\epsilon\\
		\begin{aligned}
			&\leq
				\limsup_{\epsilon \searrow 0}
					\epsilon^2
					\log
						\prob
							\left[
								w\in\probSp\mid
								\epsilon \RP{w}\notin\tau_{\bar{\gamma}}^{-1}(U_{2^{-1/(12m)}\eta}),
								|y^{\epsilon}_1-a'| \leq \eta'
							\right]\\
			&=
				\limsup_{\epsilon \searrow 0}
					\epsilon^2
					\log
						\hat\nu^{\epsilon}
							\Big[
								\Big\{
								 	(\RP{x},l)\in\geoRPsBes{\alpha,12m}(\RealNum^d)\times\RealNum\langle\lambda\rangle
									\,\Big|\,\RP{x}\in\tau_{\bar{\gamma}}^{-1}(U_{2^{-1/(12m)}\eta})^\complement,\\
			&\phantom{=}\qquad\qquad\qquad\qquad\qquad\qquad\qquad\qquad\qquad\qquad\qquad
									|a+\Phi(\RP{x},\RP{l})^1_{0,1}-a'|\leq\eta'
								\Big\}
							\Big]\\
			&\leq
				-
				\inf
					\left\{
						\frac{ \|\gamma \|_\CM^2 }{2}
						\,\middle|\,
						\gamma\in\CM,
						\RP{\gamma}\in\tau_{\bar{\gamma}}^{-1}(U_{2^{-1/(12m)}\eta})^c,
						|a+\Phi (\RP{\gamma},\RP{0})^1_{0,1}-a'|\leq\eta'
					\right\}.
		\end{aligned}
	\end{multline}
	Here, $\Phi: \geoRPs{p} (\RealNum^{d+1}) \to \geoRPs{p} (\RealNum^n)$
	denotes the Lyons-It\^o map that corresponds to the coefficient $[\sigma, b]$
	and we used the embeddings
	$
		\geoRPsBes{\alpha,12m}(\RealNum^d)\times\RealNum\langle\lambda\rangle
		\hookrightarrow
			\geoRPsBes{\alpha,12m}(\RealNum^{d+1})
		\hookrightarrow
			\geoRPs{p}(\RealNum^{d+1})
	$ implicitly.
	In the last inequality we used large deviation upper estimate for a closed set.
	Notice also that $a+\Phi (\RP{\gamma},0)^1 = \phi^0 (\gamma)$.


	Now let $\eta'$ tend to $0$.
	As $\eta'$ decreases, the most right-hand side of \eqref{eq_1531297248} decreases.
	The proof is finished if the limit is strictly smaller than $- \|\bar{\gamma}\|_\CM^2/2$.
	Assume otherwise.
	Then, there exists $\{\gamma_k\}_{k=1}^{\infty} \subset \CM$ such that
	\begin{align*}
		\RP{\gamma}_k
		&\in
			\tau^{-1}_{\bar{\gamma}}(U_{2^{-1/(12m)}\eta})^c,
		&
		|a+\Phi (\RP{\gamma}_k,0)^1_{0,1}-a'|
		&\leq
			\frac{1}{k},
		&
		\liminf_{k\to\infty}
			\left(-\frac{\|\gamma_k\|_\CM^2}{2}\right)
		&\geq
			-\frac{\|\bar{\gamma}\|_\CM^2}{2}.
	\end{align*}
	In particular, $\{\gamma_k\}$ is bounded in $\CM$.
	Hence, by goodness of the rate function,
	the lifts $\{ \RP{\gamma}_k\}$ is precompact in $\geoRPsBes{\alpha, 12m} (\RealNum^d)$.
	By taking a subsequence if necessary,
	we may assume $\{\gamma_k\}$ converges to some $\RP{z}$ in $(\alpha, 12m)$-Besov topology.
	By the continuity of $\Phi$, we have $a+\Phi (\RP{z},0)^1_{0,1}=a'$.
	Since $\RP{z}\in \tau_{\bar{\gamma}}^{-1}(U_{2^{-1/(12m)}\eta})^c$, $\RP{z}\neq\bar{\RP{\gamma}}$.
	From the lower semicontinuity of the rate function, we see that $\RP{z}$ is the lift of some $z \in \CM$
	and $\|z\|_\CM^2/2\leq\|\bar{\gamma}\|_{\CM}^2/2$.
	This clearly contradicts \aref{ass_minimizer}.
\end{proof}

\subsection{Proof of \tref{thm_MAIN}}

Now, let us calculate the kernel $p_t(a, a')$.
We see that
$
	p_{\epsilon^{1/\Hurst}}(a, a')
	=
		\expect[\delta_{a'} ( y_1^{\epsilon})]
	=
		I_1 (\epsilon)+I_2 (\epsilon)
$,
where
\begin{align*}
	I_1 (\epsilon)
	&=
		\expect
			[
 				\delta_{a'} ( y_1^{\epsilon} )   \chi_{\eta} (\epsilon, w)
			],
	&
	I_2 (\epsilon)
	&=
		\expect
			[
				\delta_{a'} ( y_1^{\epsilon} )  \{ 1- \chi_{\eta} (\epsilon, w) \}
 			].
\end{align*}
As we have shown in \lref{lm.ldpcut},
the second term $I_2 (\epsilon)$ on the right hand side
does not contribute to the asymptotic expansion
for any $\eta >0$.
So, we have only to calculate the first term $I_1 (\epsilon)$
for some $\eta >0$.

However, the proof of the asymptotic expansion
of
$I_1 (\epsilon)$ in the case $\Hurst \in (1/4, 1/3]$ is
essentially the same as in the case $\Hurst \in (1/3, 1/2]$
(see \cite[Subsections 5.2--5.3]{Inahama2016c}).
Therefore, for the sake of brevity,
we will give a sketch of proof only.

\begin{proof}[Sketch of proof of \tref{thm_MAIN}]
By Cameron-Martin formula, we have
\begin{align*}
	I_1  (\epsilon)
	=
		\expect
			\bigg[
				\exp
					\bigg(
						-\frac{ \|  \bar{\gamma} \|^2_{\CM}}{2\epsilon^2}
						-\frac{1}{\epsilon} \langle \bar{\gamma}, w \rangle
					\bigg)
				\delta_{a'} ( \tilde{y}_1^{\epsilon} )
				\chi_{\eta} \bigg(\epsilon, w + \frac{\bar{\gamma}}{\epsilon}\bigg)
	 		\bigg].
\end{align*}
Moreover, there exists $ \bar\nu \in \RealNum^n$ such
that $\langle \bar{\gamma}, w \rangle
= \langle \bar\nu , \phi^1_1 (w,\bar{\gamma}) \rangle$ for all $w$,
where the inner product on the right hand side is a
standard one on $\RealNum^n$.
(In fact, $\bar\nu$ is a covector that appears
in the Lagrange multiplier method for
$\phi^0_1$
and
$\gamma \mapsto \|\gamma\|^2_{\CM} /2$
at
$\bar{\gamma}$.
Note that $\phi^1_1$ is a continuous extension of
$\gamma \mapsto D_\gamma \phi^0_1(\bar{\gamma})$.)
Hence,  we have
\begin{align*}
	I_1(\epsilon)
	&=
		\exp
			\bigg(-\frac{\|\bar{\gamma} \|^2_{{\CM}}}{2\epsilon^2}  \bigg)
		\expect
			\bigg[
				\exp
					\bigg(
						-\frac{1}{\epsilon} \langle \bar\nu , \phi^1_1  \rangle
					\bigg)
				\delta_{a'} ( a' +\epsilon \phi_1^1    +  r_{\epsilon, 1}^{2} )
				\chi_{\eta} \bigg(\epsilon, w + \frac{\bar{\gamma}}{\epsilon}\bigg)
			\bigg]\\
	&=
		\frac{1}{\epsilon^n}
		\exp
			\bigg(-\frac{\|\bar{\gamma} \|^2_{{\CM}}}{2\epsilon^2}  \bigg)
		\expect
			\bigg[
				\exp
					\bigg(
						- \frac{1}{\epsilon} \langle \bar\nu , \phi^1_1  \rangle
					\bigg)
			\delta_{0} (\phi_1^1 + \epsilon^{-1}   r_{\epsilon, 1}^{2} )
			\chi_{\eta} \bigg(\epsilon, w + \frac{\bar{\gamma}}{\epsilon}\bigg)
		\bigg]\\
	&=
		\frac{1}{\epsilon^n}
		\exp
			\bigg(-\frac{\|\bar{\gamma} \|^2_{{\CM}}}{2\epsilon^2}  \bigg)
		\expect
			\bigg[
				\exp
					\bigg(
						\frac{\langle \bar\nu ,  r_{\epsilon, 1}^{2}   \rangle}{\epsilon^2}
					\bigg)
				\delta_{0} (  \phi_1^1 + \epsilon^{-1} r_{\epsilon, 1}^{2} )
				\chi_{\eta} \bigg(\epsilon, w + \frac{\bar{\gamma}}{\epsilon}\bigg)
			\bigg]\\
	&=
		\frac{1}{\epsilon^n}
		\exp
			\bigg(-\frac{\|\bar{\gamma} \|^2_{{\CM}}}{2\epsilon^2}  \bigg)
		\expect
			\bigg[
				F(\epsilon, w)
				\delta_{0} \bigg(  \frac{ \tilde{y}_1^{\epsilon} - a'}{\epsilon } \bigg)
			\bigg],
\end{align*}
where
\begin{align*}
	F(\epsilon, w)
	=
		\exp
			\bigg(
				\frac{\langle \bar\nu ,  r_{\epsilon, 1}^{2}   \rangle}{\epsilon^2}
			\bigg)
		\chi_{\eta} \bigg(\epsilon, w + \frac{\bar{\gamma}}{\epsilon}\bigg)
		\psi \bigg(  \frac{1}{\eta'^2}  \bigg| \frac{\tilde{y}_1^{\epsilon} - a'}{\epsilon}   \bigg|^2 \bigg)
\end{align*}
for any positive constant $\eta'$.
Here we have used $\delta_0(\bullet)=\psi(|\bullet|^2/\eta'^2)\delta_0(\bullet)$.

By a slight modification of
Watanabe's asymptotic expansion theory and
uniform non-degeneracy
(\pref{prop_1526956280}),
$\delta_{0} ( ( \tilde{y}_1^{\epsilon} - a')/ \epsilon  )$
admits the following
asymptotic expansion as $\epsilon \searrow 0$ in
$\tilSobSp{-\infty}{}$-topology as follows:
for some $\Phi_{\nu_j}  \in \tilSobSp{-\infty}{}$ ($j=1,2,\ldots$),
\begin{align}\label{eq.180810}
	\delta_{0} \bigg(  \frac{ \tilde{y}_1^{\epsilon} - a'}{\epsilon } \bigg)
	\sim
	\delta_{0} ( \phi_1^1 )
	+
	\epsilon^{\nu_1} \Phi_{\nu_1}
	+ \epsilon^{\nu_2} \Phi_{\nu_2}+\cdots
\end{align}
(see \cite[Theorem 9.3, p.~387]{IkedaWatanabe1989}).
Recall that
$0= \nu_0 < \nu_1 < \nu_2 <\cdots$ are
all the elements of $\Lambda_3$ in increasing order.
Moreover, since
$\phi_1^1$ is a Gaussian random vector whose
covariance matrix equals  $Q (\bar{\gamma})$,
$\delta_{0} ( \phi_1^1 )$
is actually a non-trivial finite measure
with its total mass
 $\expect [\phi_1^1 ]
= (2\pi)^{-n/2} (\det Q (\bar{\gamma}))^{-1/2} >0$.

Therefore, our problem reduces to
showing $F (\epsilon, \bullet)$ belongs to
$\tilSobSp{\infty}{}$
and admits asymptotic expansion
in $\tilSobSp{\infty}{}$-topology with the index set $\Lambda'_3$.
However, this is highly non-trivial since
$\exp (\langle \bar\nu ,  r^2_{ \epsilon,1}   \rangle/\epsilon^2)$
itself does not have nice integrability.
This is why the two technical factors are involved in the definition of $F (\epsilon, \bullet)$.

Let us observe the two technical factors.
Firstly,
$\chi_{\eta} (\epsilon, w +  \bar{\gamma} /\epsilon)  $ and its derivatives vanish
outside $\{  w~|~ \epsilon \RP{w} \in U_{\eta} \}$
and
$\psi \bigl( \eta'^{-2}  \bigl| (\tilde{y}_1^{\epsilon} - a' )/\epsilon   \bigr|^2  \bigr)$
and its derivatives vanish
outside $\{  |r^1_{\epsilon,1 } /\epsilon | \le \eta' \}$.
Secondly, as $\epsilon \searrow 0$,
\begin{align}\label{eq.180811}
	\chi_{\eta} \bigg(\epsilon, w + \frac{\bar{\gamma}}{\epsilon}\bigg)
	&=
		1+ O(\epsilon^M),
	&
	\psi \bigg(  \frac{1}{\eta'^2}  \bigg| \frac{\tilde{y}_1^{\epsilon} - a'}{\epsilon}   \bigg|^2 \bigg)
	&= 1+ O(\epsilon^M)
\end{align}
in
$\SobSp{\infty}{}$-topology
for any (large) $M >0$.
Therefore, these two Wiener functionals
have no influence in the coefficient of
the expansion of $F (\epsilon, \bullet)$.

The expansion of
$
\langle \bar\nu ,  r^2_{ \epsilon,1}   \rangle/\epsilon^2
=  \langle \bar\nu ,  \phi^2_1 + r^{\kappa_3}_{\epsilon,1}/\epsilon^2\rangle$
is indexed by $\Lambda'_2$.
Hence, if the integrability issue were left aside,
we would easily have an expansion of the following type:
\begin{equation}\label{eq.180812}
	\exp
		\bigg(
			\frac{\langle \bar\nu ,  r_{\epsilon, 1}^{2}   \rangle}{\epsilon^2}
		\bigg)
	\sim
		e^{\langle \bar\nu ,  \phi^2_1\rangle}
		(
			1
			+\epsilon^{\rho_1} \Xi_{\rho_1}
			+\epsilon^{\rho_2} \Xi_{\rho_2}+\cdots
		),
\end{equation}
where
$0= \rho_0 < \rho_1 < \rho_2 <\cdots$ are
all the elements of $\Lambda'_3$ in increasing order.
Due to \eqref{eq.180811}, it also should  hold that
\begin{equation}\label{eq.180813}
	F (\epsilon, \bullet)
	\sim
		e^{\langle \bar\nu ,  \phi^2_1\rangle}
		(
			1
			+\epsilon^{\rho_1} \Xi_{\rho_1}
			+\epsilon^{\rho_2} \Xi_{\rho_2}+\cdots
		).
\end{equation}
Once \eqref{eq.180813} is actually shown,
then we prove our main theorem (\tref{thm_MAIN}).
Moreover,  $\alpha_0 := \expect [e^{\langle \bar\nu ,  \phi^2_1\rangle}
\delta_{0} ( \phi_1^1 )] >0$ is finite
 due to Assumption \aref{ass_hessian}.

Roughly speaking, the proof of \eqref{eq.180813} goes
in the following way.
First, Assumption \aref{ass_hessian}
implies $\expect [e^{\langle \bar\nu ,  \phi^2_1\rangle}
\delta_{0} ( \phi_1^1 )] <\infty$
since $\langle \bar\nu ,  \phi^2_1\rangle$
belongs to the inhomogeneous Wiener chaos of order 2
whose main term corresponds to
the Hessian in \aref{ass_hessian}.
Though it is not at all obvious, the Hessian is
actually Hilbert-Schmidt on $\CM \times \CM$.

However, what we want to estimate is something like
\begin{align*}
	\exp
		\bigg(
			\langle \bar\nu ,\phi^2_{1}\rangle
			+\frac{\langle \bar\nu ,  r^{\kappa_3}_{\epsilon,1}   \rangle}{\epsilon^2}
		\bigg)
		\cdot
		\delta_{0}
			\bigg( \phi_1^1+\frac{r^2_{ \epsilon,1}}{\epsilon}  \bigg).
\end{align*}
Here, thanks to the factor
$\chi_{\eta} (\epsilon, w +  \bar{\gamma} /\epsilon)$,
we may assume $\epsilon \RP{w}$ stay in the bounded set
$U_{\eta}$
in the geometric rough path space.
Hence, we can use the deterministic version of
Taylor expansion of Lyons-It\^o map
(\pref{pr.map.rp2.5}) to prove that
$\langle \bar\nu ,  r^{\kappa_3}_{\epsilon,1}   \rangle/\epsilon^2$ and $r^2_{ \epsilon,1}/\epsilon$
are actually so ``small"  that adding them
does not destroy the integrability we need.
\end{proof}

\section{Examples}\label{sec_1545285297}

In this section, we provide two examples
to which our main theorem (\tref{thm_MAIN}) applies.
First, we recall the case of near points
under the ellipticity condition.

\begin{example}
Assume the ellipticity condition
at the starting point $a$, that is,
$\{V_1 (a), \ldots, V_d (a) \}$ linearly spans $\RealNum^n$.
Obviously, this implies  \aref{ass_HorCon}.
If the end point $a'$ is sufficiently close to the starting point $a$,
then \aref{ass_minimizer}, \aref{ass_DeterministicNondeg}
and \aref{ass_hessian} are also satisfied.
Therefore, our main theorem can be used.

This was shown in \cite{Inahama2016b}
when $\Hurst\in (1/2, 1)$.
The same proof  works when $\Hurst\in (1/4, 1/2]$, too.
The key of the proof is the implicit function theorem.
Note that under the ellipticity assumption,
the deterministic Malliavin covariance
$Q (\gamma)$ is never degenerate and
the implicit function theorem is available at every
$\gamma \in \CM$.
\end{example}

Second, we provide an example, in which
the coefficient vector fields satisfy the H\"ormander condition,
but not the ellipticity condition.
This model was already studied in \cite{Driscoll2013}.

\begin{example}\label{exm_Heisen}
(The ``fractional diffusion" process on the Heisenberg group).
Let $d=2$, $n=3$ and
\begin{align*}
	V_0
	&=
		0,
	&
	V_1
	&=
		\frac{\partial}{\partial x^1} + 2x^2 \frac{\partial}{\partial x^3},
	&
	V_2
	&=
		\frac{\partial}{\partial x^2} -2x^1 \frac{\partial}{\partial x^3},
	\qquad
	(x^1, x^2, x^3) \in \RealNum^3.
\end{align*}
Then, \aref{ass_HorCon} is satisfied at every point
since $[V_1,V_2]=-4\partial/\partial x^3$.
Fortunately, we can write down the solutions
for \eqref{eq_RDEdrivenByFBM} and \eqref{eq_1526966836}
for any given initial condition $a =(a^1, a^2, a^3)$ as follows:
\begin{align*}
y_t &= \Bigl( a^1+ \RP{w}^{1, 1}_{0,t}, \,
a^2+  \RP{w}^{1, 2}_{0,t}, \,
a^3 + 2 (a^2\RP{w}^{1, 1}_{0,t} - a^1\RP{w}^{1, 2}_{0,t}  ) +
2(\RP{w}^{2, 21}_{0,t} -\RP{w}^{2, 12}_{0,t})
\Bigr),
\\
\phi^0_t (\gamma)&= \Bigl( a^1+ \gamma^1_t, \,
a^2+ \gamma^2_t,
\,
a^3 + 2 (a^2\gamma^1_t- a^1\gamma^2_t ) +
2(\RP{\gamma}^{2, 21}_{0,t} -\RP{\gamma}^{2, 12}_{0,t}) \Bigr).
\end{align*}
Here, $\RP{\gamma}=
(\RP{\gamma}^1, \ldots, \RP{\gamma}^{\lfloor 1/H \rfloor})$
is the natural lift of
$\gamma=(\gamma^1_t, \gamma^2_t)_{0 \le t \le 1} \in \CM$ by means of
Young integration and
$\RP{\gamma}^{2, ij}$ stands for the $(i,j)$-component of
$\RP{\gamma}^2$.

Let $a = (0,0,0), a'=(\xi, \eta,0) \in \RealNum^3$ with
$(\xi, \eta) \neq (0,0)$.
Then,  \aref{ass_minimizer}, \aref{ass_DeterministicNondeg}
and \aref{ass_hessian} are also satisfied.
(For a proof, see \lref{lem_hyou} below.)

Therefore, by \tref{thm_MAIN} and
\rref{RemDriftZero}, we have the following asymptotics:
\begin{align*}
		p_t((0,0,0), (\xi, \eta,0))
		\sim
			\exp
				\left(
					-
					\frac{ \xi^2 +\eta^2}{2t^{2\Hurst}}
				\right)
			\frac{1}{t^{n \Hurst}}
			\left\{
				\alpha_{0}
				+\alpha_{2} t^{2 \Hurst}
				+\alpha_{4} t^{4 \Hurst}
				+\cdots
			\right\}
	\end{align*}
	as $t \searrow 0$
	for some $\alpha_{0}>0$ and
	 $\alpha_{2j}\in \RealNum$ $(j =1,2,\dots)$.
	 \end{example}

\begin{remark}
	We make two remarks on \exref{exm_Heisen}.
	\begin{itemize}
		\item	The definitions of $V_1$ and $V_2$ slightly
				differ from literature to literature.
		\item	$V_1$ and $V_2$ are not of $C^\infty_b$.
				So, precisely speaking, we cannot use
				\tref{thm_MAIN} directly.
				However, there is no problem since the solution $(y_t)$
				has a very explicit expression.
				(The main purpose of imposing
				$C^\infty_b$-condition is to ensure the existence of
				a global solution.)
	 \end{itemize}
\end{remark}

\begin{lemma}\label{lem_hyou}
	Let the notation be as in \exref{exm_Heisen}.
	Then,  \aref{ass_minimizer}, \aref{ass_DeterministicNondeg}
	and \aref{ass_hessian} are satisfied
	for $a = (0,0,0), a'=(\xi, \eta,0) \in \RealNum^3$ with
	$(\xi, \eta) \neq (0,0)$.
	Moreover, $\bar{\gamma}_t = (\xi, \eta)R(1,t)$ and
	$\|\bar{\gamma}\|_\CM^2 = \xi^2+ \eta^2$.
\end{lemma}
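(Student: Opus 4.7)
The plan is to verify the four conclusions by exploiting the closed-form expression for $\phi^0_1(\gamma)$. With $a=(0,0,0)$, the membership condition $\phi^0_1(\gamma)=(\xi,\eta,0)$ amounts to the three scalar constraints $\gamma^1_1=\xi$, $\gamma^2_1=\eta$, and the vanishing of the L\'evy area $\int_0^1(\gamma^2\,d\gamma^1-\gamma^1\,d\gamma^2)=0$. I would first relax to the larger set $\tilde K=\{\gamma\in\CM:\gamma^1_1=\xi,\,\gamma^2_1=\eta\}\supset K_a^{a'}$; since $\|\gamma\|_\CM^2$ splits as $\|\gamma^1\|^2+\|\gamma^2\|^2$ in the one-dimensional Cameron-Martin norm and the two defining conditions decouple across the two components, the reproducing-kernel property $\langle R(1,\cdot),h\rangle=h_1$ together with $R(1,1)=1$ yields that $\|\cdot\|_\CM^2$ is uniquely minimized on $\tilde K$ by $\bar\gamma_t=(\xi,\eta)R(1,t)$, with squared norm $\xi^2+\eta^2$. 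Because $\bar\gamma$ takes values on the line $\RealNum(\xi,\eta)\subset\RealNum^2$, its L\'evy area vanishes identically, so $\bar\gamma\in K_a^{a'}$; uniqueness on $\tilde K$ thus transfers to $K_a^{a'}$, establishing \aref{ass_minimizer} together with the explicit formulas.

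For \aref{ass_DeterministicNondeg} I would verify surjectivity of $D\phi^0_1(\bar\gamma)\colon\CM\to\RealNum^3$. The first two components are the evaluation functionals $h\mapsto h^1_1$ and $h\mapsto h^2_1$; a direct computation using integration by parts yields
\[
D[\phi^0_1(\bar\gamma)]^3\langle h\rangle=2\eta h^1_1-2\xi h^2_1+4\xi\int_0^1 h^2\,dR(1,t)-4\eta\int_0^1 h^1\,dR(1,t).
\]
Surjectivity thus reduces to linear independence on the one-dimensional Cameron-Martin space of the continuous functionals $h\mapsto h_1$ and $\ell\colon h\mapsto\int_0^1 h\,dR(1,t)$. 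A hypothetical proportionality $\ell=c\,(\cdot)_1$ tested on $h=R(1,\cdot)$ forces $c=1/2$ (via $\int_0^1 R(1,t)\,dR(1,t)=1/2$), whereas testing on $h=R(s,\cdot)$ and letting $s\searrow 0$ gives $\ell(R(s,\cdot))/R(s,1)\to 1$ uniformly in $H\in(1/4,1/2]$, via the leading-order expansion $R(s,1)\sim s^{2H}/2$ and $\int_0^1 R(s,t)\,dR(1,t)\sim s^{2H}/2$; this contradicts $c=1/2$.

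Finally, for \aref{ass_hessian} the decisive point is that the Lagrange multiplier $\bar\nu\in\RealNum^3$ defined by $\bar\gamma=\sum_{k=1}^3\bar\nu_k(D[\phi^0_1(\bar\gamma)]^k)^\ast$ satisfies $\bar\nu_3=0$: matching the coefficients of the four independent functionals identified above on both sides of $\langle\bar\gamma,h\rangle_\CM=\xi h^1_1+\eta h^2_1$ forces $\bar\nu=(\xi,\eta,0)$. Since $[\phi^0_1]^1$ and $[\phi^0_1]^2$ are linear (hence have vanishing Hessian) and $\bar\nu_3=0$, differentiating the constraint $\phi^0_1(f(u))=a'$ twice and combining with $g''(0)=\|f'(0)\|_\CM^2+\langle\bar\gamma,f''(0)\rangle$ for $g(u):=\|f(u)\|_\CM^2/2$ collapses the constrained Hessian identity to
\[
\frac{d^2}{du^2}\bigg|_{u=0}\frac{\|f(u)\|_\CM^2}{2}=\|f'(0)\|_\CM^2,
\]
which is strictly positive whenever $f'(0)\neq 0$. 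The principal obstacle in the programme is the linear-independence step for $\ell$, which rests on the small-$s$ asymptotic analysis of $R(s,\cdot)$ and $\int_0^1 R(s,t)\,dR(1,t)$ (uniform across $H\in(1/4,1/2]$); once that is in hand, the remainder reduces to routine Lagrange-multiplier bookkeeping.
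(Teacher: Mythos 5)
The proof parallels the paper's for \aref{ass_minimizer} (you use the reproducing-kernel identity $\langle R(1,\cdot),h\rangle_\CM=h_1$ directly; the paper derives the same fact through the isometry $\mathcal K$ with $L^2([0,1])$ after first rotating by $\mathrm{SO}(2)$ to reduce to $\eta=0$) and for \aref{ass_hessian} (essentially the same Lagrange-multiplier bookkeeping, with $\bar\nu_3=0$ and the linearity of $G^1,G^2$ killing the Hessian contribution). The genuine departure is \aref{ass_DeterministicNondeg}: the paper invokes Bismut's surjectivity theorem at every nonzero element of the \emph{Brownian} Cameron--Martin space and then transfers to $\CM_\Hurst$ by a $C^1$-density argument, whereas you compute $DG(\bar\gamma)$ in closed form, reduce surjectivity to the linear independence of the evaluation $h\mapsto h_1$ and $\ell(h)=\int_0^1 h\,dR(1,\cdot)$ on $\CM_\Hurst(\RealNum^1)$, and rule out proportionality by testing on reproducing-kernel elements. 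This is more elementary and self-contained, avoiding external hypoellipticity machinery, at the cost of relying on the specific shape $\bar\gamma=(\xi,\eta)R(1,\cdot)$ of the minimizer.

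Your justification of the linear independence contains a slip, though the conclusion is correct. The asserted leading-order expansions $R(s,1)\sim s^{2\Hurst}/2$ and $\ell(R(s,\cdot))\sim s^{2\Hurst}/2$ both fail at $\Hurst=1/2$, where $R(s,1)=s$ and $\ell(R(s,\cdot))=\int_0^1(s\wedge t)\,dt=s-s^2/2$, each missing the claimed factor $1/2$; the asserted uniformity in $\Hurst$ is also neither proved nor needed ($\Hurst$ is fixed throughout). The conclusion $\lim_{s\searrow 0}\ell(R(s,\cdot))/R(s,1)=1$ nevertheless holds and can be obtained in one sweep for every $\Hurst\in(1/4,1/2]$: since $R(s,t)\le s^{2\Hurst}$ for all $t$ and $R(s,1)\ge s^{2\Hurst}/2$, the ratio $R(s,\cdot)/R(s,1)$ is bounded by $2$, and it converges pointwise to $1$ on $(0,1]$ as $s\searrow 0$; integrating against the finite positive measure $dR(1,\cdot)$ and applying dominated convergence yields the limit $1$, which contradicts $c=1/2$.
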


\begin{proof}
First, note that we may assume without loss of
generality that $a'=(\xi, 0,0)$ with $\xi >0$.
The reason is as follows.
If $(w_t)$ is a two-dimensional fBm and
$A \in \mathrm{SO}(2)$, then $\tilde{w} := Aw$ is again
a two-dimensional fBm and
$\RP{w}^{2, 21}_{0,t} -\RP{w}^{2, 12}_{0,t}
= \RP{\tilde{w}}^{2, 21}_{0,t} -\RP{\tilde{w}}^{2, 12}_{0,t}$.
So, if we choose  suitable $A$, the problem
reduces to the case $a'=(\xi, 0,0)$ with $\xi >0$.

Next, we fix some notation.
We write
$G(\gamma) = \phi^0_1 (\gamma)$ for any $\gamma \in \CM$
or any two-dimensional continuous path $\gamma$ such that
the Young ODE \eqref{eq_1526966836} makes sense.
Explicitly,
$
	G(\gamma)
	=
		(
			\gamma^1_1,
			\gamma^2_1,
			2
			\int_0^1
				(\gamma^2_s\,d\gamma^1_s-\gamma^1_s\,d\gamma^2_s)
		)
$.
We denote by $\CM_{\Hurst} (\RealNum^m)$
the Cameron-Martin space of an $m$-dimensional fBm
(so $\CM=\CM_{\Hurst} (\RealNum^2)$).
We  write
$I (\gamma)=\| \gamma \|^2_{\CM_{\Hurst} (\RealNum^m)}/2$
for $\gamma \in \CM_{\Hurst} (\RealNum^m)$, but suppress the dimension $m$.
Recall that there exists a real-valued kernel $K(t,s)=K_H(t,s)$, $0< s < t$,
with the following properties:
\begin{enumerate}
	\item	If $(b_t)$ is an $m$-dimensional Brownian motion,
			then $(w_t)$, defined by $w_t = \int_0^t K(t,s)\,db_s$,
			is an $m$-dimensional fBm with Hurst parameter $\Hurst$.
	\item	Set $(\mathcal{K} f)_t = \int_0^t K(t,s) f_s\, ds$.
			Then, $\mathcal{K}=\mathcal{K}_{\Hurst}$ is a unitary isometry
			from  $L^2 ([0, 1]; \RealNum^m)$ to	$\CM_{\Hurst} (\RealNum^m)$.
	\item	$R(t,t') =  \int_0^{t \wedge t'} K(t,s) K(t',s) \,ds$.
\end{enumerate}
For proofs, see \cite[Section 5.1]{Nualart2006} and
\cite[Section 1.2]{BiaginiHuOksendalZhang2008}.
There, an explicit expression of $K$ is also given,
but it is not used here.

Now we consider a one-dimensional problem
before showing \aref{ass_minimizer}.
Let $m=1$.
We now prove that $\xi R(1, \bullet)$ is a unique
minimizer of $I$
among $\gamma \in \CM_{\Hurst} (\RealNum^1)$
that connect $0$ and $\xi >0$ at time $1$, that is,
\begin{align}\label{eq.argmin}
	\mathop{\mathrm{argmin}}
		\{
			I(\gamma)
			\mid
			\gamma \in \CM_{\Hurst} (\RealNum^1),
			\gamma_1 =\xi
		\}
	=
	\{ \xi R(1, \bullet) \}.
\end{align}
From (2) and (3) above we see that
$(\mathcal{K}K(1,\ast))_\bullet=R(1,\bullet)\in\CM_\Hurst(\RealNum^1)$
and
\begin{align*}
	\| R(1, \bullet) \|_{\CM_{\Hurst} (\RealNum^1)}^2
	=
		\| K(1, \bullet) \|^2_{L^2([0,1];\RealNum^1)}
	=
		R(1,1)
	=
		1.
\end{align*}
Take $\gamma\in\CM_\Hurst(\RealNum^1)$ with $\gamma_1=\xi$ arbitrarily.
Since $\mathcal{K}$ is an isometry, there exists $f\in L^2([0,1];\RealNum^1)$
such that $\gamma=\mathcal{K}(f+\xi K(1,\bullet))$ uniquely.
From $(\mathcal{K}K(1,\ast))_\bullet=R(1,\bullet)$ and $\gamma_1=\xi$,
we have $(\mathcal{K}f)_1=0$,
which implies $K(1,\bullet)$ and $f$
are orthogonal in $L^2([0,1];\RealNum^1)$.
Hence we can deduce
$
	\|\gamma\|_{\CM}^2
	=
		\|f+\xi K(1,\bullet)\|_{L^2([0,1];\RealNum^1)}^2
	=
		\|f\|_{L^2([0,1];\RealNum^1)}^2+\xi^2
$.
The minimum of $I(\gamma)$ is achieved only when $f=0$.
Thus, we have shown \eqref{eq.argmin}.
It is easy to see from \eqref{eq.argmin} that
\aref{ass_minimizer} holds with
$\bar{\gamma} = (\xi, 0) R(1, \bullet) \in \CM$.

Next, we prove \aref{ass_DeterministicNondeg}.
Recall that non-degeneracy of the deterministic
Malliavin covariance matrix $Q(\bar{\gamma})$
of $G$ at $\bar{\gamma}$ is equivalent
to the surjectivity of the tangent map $DG(\bar{\gamma})\colon\CM\to\RealNum^3$.
Note that
$\bar{\gamma} \in \CM_{1/2} (\RealNum^2)$ since $\Hurst>1/4$.
From the third assertion in \cite[Theorem~1.10 in Chapter~1]{Bismut1984a},
we see that $DG (\gamma)$ is surjective for every
$\gamma \in \CM_{1/2} (\RealNum^2)$ with $\gamma \neq 0$
(in the reference, it is assumed that the distribution linearly spanned by $\{V_1, V_2\}$ is fat).
Since $C^1$-paths which start at $0$ are dense in $\CM_{1/2} (\RealNum^2)$,
there are three $C^1$-paths
$h_1, h_2, h_3$ which start at $0$  such that
$\{ D_{h_i}G (\bar{\gamma})\}_{1 \le i \le 3}$ spans $\RealNum^3$.
Recalling that $C^1$-paths which start at $0$ belong to $\CM$
(see \cite{FrizVictoir2006b}),
we conclude that the tangent
map of $G\colon \CM \to \RealNum^3$  at $\bar{\gamma}$
is also surjective, which is equivalent to  \aref{ass_DeterministicNondeg}.

Finally, we prove \aref{ass_hessian}.
Let $\bar{\nu}=\bar{\nu} (\bar{\gamma})\in \RealNum^3$
be the Lagrange multiplier at $\bar{\gamma}$
for $I$ under the condition that $G = a'$.
Let $f\colon(- \epsilon_0, \epsilon_0)\to K_a^{a'}$ be as in \aref{ass_hessian}.
Then, in the same way as \cite[Proposition~5.5]{Inahama2016c}, we have
\begin{align*}
	\left.\frac{d^2}{du^2}\right|_{u=0}
		\frac{\|f(u)\|_\CM^2}{2}
	=
		\|f'(0)\|_\CM^2
		-
		\langle
			\bar{\bar{\nu}},
			D^2G (\bar{\gamma})\langle f'(0),f'(0)\rangle
		\rangle_{\RealNum^3},
\end{align*}
where $D^2G (\bar{\gamma})\langle k,k\rangle=D_k^2 G(\bar{\gamma})$.
It is well-known that
\begin{align*}
	\bar{\nu}^i
	=
		\sum_{j=1}^3
			[G(\bar{\gamma})^{-1}]_{ij}
			D_{\bar{\gamma}}G^j(\bar{\gamma}).
\end{align*}

By straightforward computation, we have
for every $\gamma, h \in \CM$
\begin{align*}
	D_h G (\gamma)
	=
		\left(
			h^1_1,
			h^2_1,
			2
			\int_0^1
				(
					h^2_s\,d\gamma^1_s
					-h^1_s\,d\gamma^2_s
					+\gamma^2_s\,dh^1_s
					-\gamma^1_s\,dh^2_s
				)
		\right).
\end{align*}
Thanks to the explicit forms of $\bar{\gamma}$ and $G$,
we can easily see that $D_{\bar{\gamma}}G^j(\bar{\gamma})=(\xi, 0,0)$.
Since the second component of $\bar{\gamma}$ is zero,
the second and the third components  of
$D_h G (\bar{\gamma})$ does not depend on $h^1$,
the first component of $h$.
This implies that
\begin{align*}
	Q(\bar{\gamma})
	&=
		\begin{pmatrix}
			1 & 0 & 0 \\
			0 & 1 & *\\
			0 & * & *
		\end{pmatrix},
	&
	Q(\bar{\gamma})^{-1}
	&=
		\begin{pmatrix}
			1 & 0 & 0 \\
			0 & * & *\\
			0 & * & *
		\end{pmatrix}
\end{align*}
and hence $\bar{\nu} =(\xi, 0,0)$.
Since $G^1$ is linear, $D^2 G^1 \equiv 0$.
Thus, we have shown that
$\langle \bar{\nu}, D^2G (\bar{\gamma})\langle f'(0),f'(0)\rangle\rangle_{\RealNum^3} =0$,
which clearly implies \aref{ass_hessian}.
\end{proof}

\begin{remark}
	When $\Hurst=1/2$,
	Young ODE \eqref{eq_1526966836} controlled by
	$\gamma \in \CM$ is called skeleton ODE in probability theory
	and has been thoroughly studied
	in both  probability theory and control theory.
	In that case,  \aref{ass_HorCon}--\aref{ass_hessian}
	are known to be a natural set of assumptions with many concrete examples.

	On the other hand, when $\Hurst\neq 1/2$,
	not much seems to be known for this ODE.
	This is the main reason why we cannot
	systematically find examples for our main theorem in this section.
	Therefore, we believe that  a ``fractional version of
	the control theory of ODEs" needs to be investigated
	for future developments of this topic.
	(The fraction version could also be useful in relaxing the
	unique minimizer assumption \aref{ass_minimizer}.)
\end{remark}

\appendix
\section{Technical lemma}\label{sec_1545285314}

The following lemma to be used in the proof of \lref{lem_1523247628}
is a slight modification of \cite[Lemma~2.3.1]{Nualart2006}.
For readers' convenience, we prove it in this section.
\begin{lemma}\label{lem_1528115503}
	Let $\{A^\epsilon\}_{0<\epsilon<1}$ be a family of real symmetric non-negative definite $n\times n$ random matrices.
	Let $\mu_1$ and $\mu_2$ be non-negative constants.
	We assume that for every $1<p<\infty$, there exist positive constants $c_{p,1}$ and $c_{p,2}$ such that
	\begin{align*}
		\prob(|A^\epsilon|>1/\xi)
		&
			\leq
				c_{p,1}
				(\epsilon^{-\mu_1}\xi)^p,
		&
		\sup_{|v|=1}
			\prob(\langle v,A^\epsilon v\rangle_{\RealNum^n}<\xi)
			\leq
				c_{p,2}
				(
					\epsilon^{-\mu_2}
					\xi
				)^p
	\end{align*}
	for any $0<\epsilon<1$ and $0<\xi<1$.
	Then, for all $0<\epsilon<1$, $\mu>\mu_1\vee\mu_2$ and $1<r<\infty$, we have
	\begin{align*}
		\expect[\MinEigenVal(A^\epsilon)^{-r}]^{1/r}
		\leq
			c
			\epsilon^{-\mu}.
	\end{align*}
	Here, $c$ is a positive constant, which is independent of $\epsilon$ and $\mu$ but depends on $r$.
\end{lemma}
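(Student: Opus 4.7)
The plan is to adapt the standard covering-argument proof of \cite[Lemma~2.3.1]{Nualart2006}, keeping careful track of the $\epsilon$-dependent constants. Throughout, assume $0<\epsilon<1$.

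First I would reduce an eigenvalue estimate to a quadratic-form estimate on a finite net. For each $\eta\in(0,1)$ fix an $\eta$-net $\{v_1,\dots,v_N\}\subset S^{n-1}$ with $N\le C_n\eta^{-(n-1)}$. For any $v\in S^{n-1}$ choose $v_i$ with $|v-v_i|\le\eta$; symmetry of $A^\epsilon$ gives
\begin{align*}
	|\langle v,A^\epsilon v\rangle-\langle v_i,A^\epsilon v_i\rangle|
	\le 2\eta\,|A^\epsilon|.
\end{align*}
Taking $v$ to be an eigenvector for $\MinEigenVal(A^\epsilon)$ yields
$\min_i\langle v_i,A^\epsilon v_i\rangle\le\MinEigenVal(A^\epsilon)+2\eta|A^\epsilon|$,
so that, for every $\xi\in(0,1)$ and every $M>0$,
\begin{align*}
	\prob(\MinEigenVal(A^\epsilon)<\xi)
	\le\prob(|A^\epsilon|>M)+\sum_{i=1}^N\prob\bigl(\langle v_i,A^\epsilon v_i\rangle\le\xi+2\eta M\bigr).
\end{align*}

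Next I would calibrate $\eta$ and $M$ and invoke the two hypotheses. Choose $M=\xi/(2\eta)$ so that $\xi+2\eta M=2\xi$, and choose $\eta=\xi^2$. Then the first hypothesis (with exponent $p$) controls $\prob(|A^\epsilon|>1/(2\xi))\le c_{p,1}(2\xi\epsilon^{-\mu_1})^p$, while the second hypothesis controls each of the $N\le C_n\xi^{-2(n-1)}$ net-terms by $c_{p,2}(2\xi\epsilon^{-\mu_2})^p$. Writing $\mu_0:=\mu_1\vee\mu_2$ and absorbing constants, for every integer $p>2(n-1)$ there is $C_{p,n}>0$ with
\begin{align*}
	\prob(\MinEigenVal(A^\epsilon)<\xi)\le C_{p,n}\,\epsilon^{-\mu_0 p}\,\xi^{\,p-2(n-1)}
	\qquad(0<\xi<1,\ 0<\epsilon<1).
\end{align*}

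Then I would integrate the tail. Using $\expect[\MinEigenVal(A^\epsilon)^{-r}]=\int_0^\infty rt^{r-1}\prob(\MinEigenVal(A^\epsilon)<1/t)\,dt$, splitting at $t=1$, and bounding the integrand on $t\ge1$ by the above tail with $q:=p-2(n-1)$ (chosen strictly greater than $r$ so the integral converges), I obtain
\begin{align*}
	\expect[\MinEigenVal(A^\epsilon)^{-r}]
	\le 1+\frac{rC_{p,n}}{q-r}\,\epsilon^{-\mu_0 p}.
\end{align*}
Given $\mu>\mu_0$, the main point is to choose $p$ depending on $r$ so that $\mu_0 p\le\mu r$ while keeping $q=p-2(n-1)>r$. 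Both inequalities are compatible whenever $r>2(n-1)\mu_0/(\mu-\mu_0)$, and this yields the desired bound $\expect[\MinEigenVal(A^\epsilon)^{-r}]^{1/r}\le c\,\epsilon^{-\mu}$ for all sufficiently large $r$. The remaining small values of $r$ follow from Jensen's inequality (monotonicity of $L^r$-norms in $r$ applied to the random variable $\MinEigenVal(A^\epsilon)^{-1}$).

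The only mildly delicate step is the last one: the calibration linking $p$ (which governs both the net cardinality loss $\xi^{-2(n-1)}$ and the powers $\epsilon^{-\mu_0 p}$) to $r$ and to the prescribed $\mu>\mu_0$. Everything else is a bookkeeping exercise in standard covering and tail-integration estimates.
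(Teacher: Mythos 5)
Your proposal is correct and follows essentially the same route as the paper's proof: decompose on the event $\{|A^\epsilon|>M\}$ versus a finite net on $S^{n-1}$ of radius $\sim\xi^2$, invoke the two tail hypotheses with suitably large exponents, integrate $\prob(\MinEigenVal(A^\epsilon)<\xi)$ against $\xi^{-r-1}\,d\xi$ over $(0,1)$, and handle small $r$ by monotonicity of $L^r$-norms. The only differences are cosmetic bookkeeping (you use a slightly sharper covering-number bound $\xi^{-2(n-1)}$ versus the paper's $\xi^{-2n}$, and a single exponent $p$ for both hypotheses, whereas the paper takes $p=r+1$ and $p=r+1+2n$ so the tail is exactly $\xi^{r+1}$), and both proofs share the same harmless loose end of applying the hypotheses at $2\xi$ rather than $\xi$.
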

\begin{proof}
	Fix $0<\epsilon<1$. Let $\mu>\mu_1\vee\mu_2$ and set
	\begin{align*}
		r_0
		=
			\frac{(1+2n)(\mu_1\vee\mu_2)}{\mu-(\mu_1\vee\mu_2)}
			\vee
			1.
	\end{align*}
	First of all, we show that, for every $r\geq r_0$,
	there exists a positive constant $\tilde{c}_r$ such that
	\begin{align}\label{eq_1529303381}
		\prob(\MinEigenVal(A^\epsilon)<\xi)
		\leq
			\tilde{c}_r
			\epsilon^{-r\mu}
			\xi^{r+1}
	\end{align}
	for all $0<\xi<1$.
	Here, we can choose $\tilde{c}_r$ as a constant
	which is independent of $\epsilon$, $\xi$, and $\mu$ but depends on $r$.
	Note that the restriction $r\geq r_0$ and $\mu>\mu_1\vee \mu_2$
	imply $r\mu\geq (r+1+2n)\mu_i$ for $i=1,2$.
	Note
	\begin{align*}
		\prob(\MinEigenVal(A^\epsilon)<\xi)
		&\leq
			\prob(|A^\epsilon|\geq 1/\xi)
			+
			\prob(\MinEigenVal(A^\epsilon)<\xi,|A^\epsilon|\leq 1/\xi).
	\end{align*}
	The first term satisfies
	$
		\prob(|A^\epsilon|\geq 1/\xi)
		\leq
			c_{r+1,1}(\epsilon^{-\mu_1}\xi)^{r+1}
		\leq
			c_{r+1,1}
			\epsilon^{-r\mu}
			\xi^{r+1}
	$
	from the assumption	and $r\mu\geq (r+1)\mu_1$.
	The second term is estimated as follows.
	We denote by $S^{n-1}$ and $B(v,\rho)$ the unit sphere centered at the origin
	and the open ball with center $v\in\RealNum^n$ and radius $\rho>0$.
	Since $S^{n-1}$ is compact,
	there exists vectors $v_1,\dots,v_m\in S^{n-1}$ so that
	$\{B(v_i,\xi^2/2)\}_{i=1}^m$ covers $S^{n-1}$.
	In addition, $\MinEigenVal(A^\epsilon)=\inf_{v\in S^{n-1}}\langle v,A^\epsilon v\rangle_{\RealNum^n}$ holds.
	From these facts, on the event $\{\MinEigenVal(A^\epsilon)<\xi,|A^\epsilon|<1/\xi\}$,
	we can choose $v\in S^{n-1}$ such that $\langle v,A^\epsilon v\rangle_{\RealNum^n} <\xi$
	with $v\in B(v_i,\xi^2/2)$ for some $1\leq i\leq m$.
	Hence,
	\begin{align*}
			\langle v_i,A^\epsilon v_i\rangle_{\RealNum^n}
			\leq
				\langle v,A^\epsilon v\rangle_{\RealNum^n}
				+
				|\langle v_i-v,A^\epsilon v\rangle_{\RealNum^n}|
				+
				|\langle v_i,A^\epsilon (v_i-v)\rangle_{\RealNum^n}|
			<
				2\xi,
	\end{align*}
	which implies
	\begin{align*}
		\{\MinEigenVal(A^\epsilon)<\xi,|A^\epsilon|<1/\xi\}
		\subset
			\bigcup_{i=1}^m
				\{\langle v_i,A^\epsilon v_i\rangle_{\RealNum^n}<2\xi\}.
	\end{align*}
	Moreover, we see that the number $m$ of the vectors may depend on $\xi$; however $m\leq c_n \xi^{-2n}$ holds,
	where $c_n$ is a constant depending only on $n$.
	Therefore, the term $\prob(\MinEigenVal(A^\epsilon)<\xi,|A^\epsilon|\leq 1/\xi)$ is estimated by
	\begin{align*}
		\sum_{i=1}^m
			\prob(\langle v_i,A^\epsilon v_i\rangle_{\RealNum^n}<2\xi)
		\leq
			m
			c_{r+1+2n,2}
			(\epsilon^{-\mu_2}2\xi)^{r+1+2n}
		\leq
			c_{r,n}
			\epsilon^{-r\mu}
			\xi^{r+1},
	\end{align*}
	where
	$
		c_{r,n}
		=
			2^{r+1+2n}
			c_n
			c_{r+1+2n,2}
	$.
	In the last estimate, we used $r\mu\geq (r+1+2n)\mu_2$.
	The estimates above imply
	\begin{align*}
		\prob(\MinEigenVal(A^\epsilon)<\xi)
		&\leq
			c_{r+1,1}
			\epsilon^{-r\mu}
			\xi^{r+1}
			+
			c_{r,n}
			\epsilon^{-r\mu}
			\xi^{r+1},
	\end{align*}
	which implies \eqref{eq_1529303381} with $\tilde{c}_r=c_{r+1,1}+c_{r,n}$.

	Next, we complete the proof. It follows from \eqref{eq_1529303381} that,
	for all $r\geq r_0$,
	\begin{align*}
		\expect[\MinEigenVal(A^\epsilon)^{-r}]
		&=
			r
			\int_0^\infty
				\xi^{-r-1}
				\prob(\MinEigenVal(A^\epsilon)<\xi)\,
				d\xi\\
		&\leq
			r
			\int_0^1
				\xi^{-r-1}
				\tilde{c}_r
				\epsilon^{-r\mu}
				\xi^{r+1}\,
				d\xi
			+
			r
			\int_1^\infty
				\xi^{-r-1}
				d\xi\\
		&=
			r
			\tilde{c}_r
			\epsilon^{-r\mu}
			+
			1\\
		&\leq
			(1+r\tilde{c}_r)
			\epsilon^{-r\mu}.
	\end{align*}
	For $1\leq r<r_0$, we have
	\begin{align*}
		\expect[\MinEigenVal(A^\epsilon)^{-r}]^{1/r}
		\leq
			\expect[\MinEigenVal(A^\epsilon)^{-r_0}]^{1/r_0}
		\leq
			(1+r_0\tilde{c}_{r_0})^{1/r_0}
			\epsilon^{-\mu}.
	\end{align*}
	These imply the assertion.
\end{proof}

\section*{Acknowledgement}
The first named author was partially supported by JSPS KAKENHI Grant Number JP15K04922.
The second named author was partially supported by JSPS KAKENHI Grant Number JP17K14202.

\end{document}